\let\over=\@@over \let\overwithdelims=\@@overwithdelims
\let\atop=\@@atop \let\atopwithdelims=\@@atopwithdelims
\let\above=\@@above \let\abovewithdelims=\@@abovewithdelims
\tikzstyle{int}=[draw, fill=blue!20, minimum size=2em]
\tikzstyle{dot}=[circle, draw, fill=blue!20, minimum size=2em]
\tikzstyle{init} = [pin edge={to-,thin,black}]
\newcommand{\vf}{\boldsymbol{f}}
\newcommand{\vferm}{\hat{\boldsymbol{f}}_{\mathsf{erm}}}
\newcommand{\calvF}{\boldsymbol{\mathcal F}}
\newcommand{\vfbest}{\boldsymbol{f}^*}
\newcommand{\vX}{\boldsymbol{X}}
\newcommand{\vx}{\boldsymbol{x}}
\newcommand{\ve}{\boldsymbol{e}}
\newcommand{\vtheta}{\boldsymbol{\theta}}
\newcommand{\eqref}[1]{~(\ref{#1})}
\def\mod{\mathop{\rm mod}}
\def\vect#1{\bm{#1}}
\newcommand{\norm}[1]{{\left\Vert #1 \right\Vert}}
\newcommand{\argmin}{\mathop{\rm argmin}}
\newcommand{\argmax}{\mathop{\rm argmax}}
\def\exp{\mathop{\rm exp}}
\def\EE{\Expect}
\def\PP{\mathbb{P}}
\def\eqdef{\triangleq}
\def\simiid{\stackrel{iid}{\sim}}
\newcommand{\abs}[1]{\left| #1 \right|}
\def\bbordermatrix#1{\begingroup \m@th
	\@tempdima 4.75\p@
	\setbox\z@\vbox{%
		\def\cr{\crcr\noalign{\kern2\p@\global\let\cr\endline}}%
		\ialign{$##$\hfil\kern2\p@\kern\@tempdima&\thinspace\hfil$##$\hfil
			&&\quad\hfil$##$\hfil\crcr
			\omit\strut\hfil\crcr\noalign{\kern-\baselineskip}%
			#1\crcr\omit\strut\cr}}%
	\setbox\tw@\vbox{\unvcopy\z@\global\setbox\@ne\lastbox}%
	\setbox\tw@\hbox{\unhbox\@ne\unskip\global\setbox\@ne\lastbox}%
	\setbox\tw@\hbox{$\kern\wd\@ne\kern-\@tempdima\left[\kern-\wd\@ne
		\global\setbox\@ne\vbox{\box\@ne\kern2\p@}%
		\vcenter{\kern-\ht\@ne\unvbox\z@\kern-\baselineskip}\,\right]$}%
	\null\;\vbox{\kern\ht\@ne\box\tw@}\endgroup}
\newcommand{\stepa}[1]{\overset{\rm (a)}{#1}}
\newcommand{\stepb}[1]{\overset{\rm (b)}{#1}}
\newcommand{\stepc}[1]{\overset{\rm (c)}{#1}}
\newcommand{\btheta}{\vect{\theta}}
\newcommand{\Poi}{\mathrm{Poi}}
\newcommand{\reals}{\mathbb{R}}
\newcommand{\integers}{\mathbb{Z}}
\newcommand{\Expect}{\mathbb{E}}
\newcommand{\iid}{iid\xspace}
\newcommand{\ind}{ind.\xspace}
\newcommand{\pth}[1]{\left( #1 \right)}
\newcommand{\qth}[1]{\left[ #1 \right]}
\newcommand{\sth}[1]{\left\{ #1 \right\}}
\newcommand{\iiddistr}{{\stackrel{\text{\iid}}{\sim}}}
\newcommand{\inddistr}{{\stackrel{\text{\ind}}{\sim}}}
\newcommand{\Binom}{\text{Binom}}
\newcommand{\indc}[1]{{\mathbf{1}_{\left\{{#1}\right\}}}}
\definecolor{myblue}{rgb}{.8, .8, 1}
\definecolor{mathblue}{rgb}{0.2472, 0.24, 0.6} 
\definecolor{mathred}{rgb}{0.6, 0.24, 0.442893}
\definecolor{mathyellow}{rgb}{0.6, 0.547014, 0.24}
\newcommand{\sfR}{{\mathsf{R}}}
\newcommand{\calF}{{\mathcal{F}}}
\newcommand{\calP}{{\mathcal{P}}}
\def\unifto{\mathop{{\mskip 3mu plus 2mu minus 1mu%
			\setbox0=\hbox{$\mathchar"3221$}%
			\raise.6ex\copy0\kern-\wd0%
			\lower0.5ex\hbox{$\mathchar"3221$}}\mskip 3mu plus 2mu minus 1mu}}
\def\simleq{{{\mskip 3mu plus 2mu minus 1mu%
			\setbox0=\hbox{$\mathchar"013C$}%
			\raise.2ex\copy0\kern-\wd0%
			\lower0.9ex\hbox{$\mathchar"0218$}}\mskip 3mu plus 2mu minus 1mu}}
\def\simleq{\lesssim}
\def\simgeq{{{\mskip 3mu plus 2mu minus 1mu%
			\setbox0=\hbox{$\mathchar"013E$}%
			\raise.2ex\copy0\kern-\wd0%
			\lower0.9ex\hbox{$\mathchar"0218$}}\mskip 3mu plus 2mu minus 1mu}}
\def\simgeq{\gtrsim}
\newif\ifmapx
\edef\jobnametmp{\expandafter\string\csname ic_apx\endcsname}
\edef\jobnameapx{\expandafter\mkillslash\jobnametmp}
\edef\jobnameexpand{\jobname}
\newcommand{\polylog}{\mathsf{polylog}}
\newcommand{\fbest}{f^{*}}
\newcommand{\ferm}{\hat f_\mathsf{erm}}
\newcommand{\frob}{\hat f_{\mathsf{Rob}}}
\newcommand{\subexpo}{\mathsf{SubE}}
\newcommand{\Regret}{\mathsf{Regret}}
\renewcommand{\hat}{\widehat}
\newcommand{\bbR}{\mathbb R}
\newcommand{\bbZ}{\mathbb Z}
\newcommand{\bbE}{\mathbb E}
\newcommand{\bbP}{\mathbb P}
\let\over=\@@over \let\overwithdelims=\@@overwithdelims
\let\atop=\@@atop \let\atopwithdelims=\@@atopwithdelims
\let\above=\@@above \let\abovewithdelims=\@@abovewithdelims
\tikzstyle{int}=[draw, fill=blue!20, minimum size=2em]
\tikzstyle{dot}=[circle, draw, fill=blue!20, minimum size=2em]
\tikzstyle{init} = [pin edge={to-,thin,black}]
\newtheorem{theorem}{Theorem}
\newtheorem{lemma}{Lemma}
\newtheorem{remark}{Remark}
\begin{document}
	
\ifpdf
	\DeclareGraphicsExtensions{.pgf,.jpg}
	\graphicspath{{figures/}{plots/}}
	\fi

    \title{Empirical Bayes via ERM and Rademacher complexities: the Poisson
model}

\author{Soham Jana, Yury Polyanskiy, Anzo Teh and Yihong Wu\thanks{
		S.J. is with the Department of Operations Research and Financial Engineering, Princeton University, Princeton, NJ, email: \url{soham.jana@princeton.edu}.
		Y.P. and A.T. are with the Department of EECS, MIT, Cambridge,
		MA, email: \url{yp@mit.edu} and \url{anzoteh@mit.edu}. Y.W. is with the Department of Statistics and Data Science, Yale
		University, New Haven, CT, email: \url{yihong.wu@yale.edu}.}}

	\maketitle

\begin{abstract}%

We consider the problem of empirical Bayes estimation for (multivariate) Poisson means. Existing solutions that have been shown theoretically optimal for minimizing the regret (excess risk over the Bayesian oracle that knows the prior) have several shortcomings. For example, the classical Robbins estimator does not retain the monotonicity property of the Bayes estimator and performs poorly under moderate sample size. Estimators based on the minimum distance and non-parametric maximum likelihood (NPMLE) methods  correct these issues, but are computationally expensive with complexity growing exponentially with dimension. Extending the approach of 
\cite{barbehenn2022nonparametric}, in this work we construct monotone estimators based on empirical risk minimization (ERM) that retain similar theoretical guarantees and can be computed much more efficiently. Adapting the idea of offset Rademacher complexity~\cite{liang2015learning} to the non-standard loss and function class in empirical Bayes, we show that the shape-constrained ERM estimator attains the minimax regret within constant factors in one dimension and within logarithmic factors in multiple dimensions. 

\end{abstract}
 
\tableofcontents



\section{Introduction}

At the heart of modern large-scale inference \cite{efron2012large}, empirical Bayes is a classical topic and powerful formalism in statistics and machine learning. Consider the Poisson model in one dimension as a concrete example. 
In a Bayesian setting, the latent parameter $\theta$ is drawn from a prior $\pi$ and the observation $X$ is then sampled from $\Poi(\theta)$, the Poisson distribution with mean $\theta$. In other words, $X$ is distributed according to the following Poisson mixture $p_\pi$ with mixing distribution $\pi$:
\begin{align}
 p_\pi(x)=\int{e^{-\theta}{\theta^x\over x!}}d\pi(\theta), \quad x\in\integers_+.
\end{align}
The Bayes estimator for $\theta$ that minimizes the squared error is the posterior mean, which can be expressed in terms of the mixture density as follows:
\begin{align}
	\fbest(x)=(x+1){p_\pi(x+1)\over p_\pi(x)}
 \label{eq:bayes}
\end{align}

In the empirical Bayes setting, the prior $\pi$ is unknown but we have access to a training sample $X_1,\dots,X_n$ drawn independently from the mixture $ p_\pi$. The goal is to learn a data-driven rule that produces vanishing excess risk over the Bayes risk, known as the {\it regret}\footnote{
In the literature there are multiple ways to formulate the regret in empirical Bayes estimation \cite{zhang2003compound}. As opposed to the formulation (known as the individual regret)
in \prettyref{eq:regret}, where the data are split into the  training set  $X_1,\ldots,X_n$ and the test set $X$, one can consider the total excess risk of estimating the latent parameters $\theta_1,\ldots,\theta_n$ based on $X_1,\ldots,X_n$ over the Bayes risk. This quantity, known as the total regret, in fact equals to $n$ times the individual regret \prettyref{eq:regret} (with $n$ replaced by $n-1$) as shown in \cite[Lemma 5]{polyanskiy2021sharp}.
}
\begin{align}
\label{eq:regret}
	\Regret_\pi(f) \eqdef
 \EE\qth{(\hat f(X)-\theta)^2}-\EE\qth{(\fbest(X)-\theta)^2}.
\end{align}
The problem of interest in this context is thus:
\begin{quote}
    {\it Can we construct computationally efficient and practically sound estimators of $\fbest$ with optimal regret over a class of priors?}
\end{quote}

Preliminary analyses of the Poisson empirical Bayes problem go back to \cite{Rob51,Rob56}, who proposed the following rule as an empirical approximation of \prettyref{eq:bayes}:
\begin{align}\label{eq:est-robbins}
	\frob(X)\eqdef \frob(X;X_1,\dots,X_n)=(X+1){N_n(X+1)\over N_n(X)+1}
\end{align}
where $N_n(x)= \sum_{i=1}^n\indc{X_i=x}$ is the empirical count for each $x\in \integers_+$ in the training sample.
Such an approach is termed ``$f$-modeling'' that focuses on approximating  the mixture density  \cite{efron2014two}. Recent theoretical developments \cite{BGR13,PW20} have established that the Robbins method achieves the optimal rate of regret when $\pi$ has either bounded support or subexponential tails. On the other hand, in practice, it is well-recognized that the Robbins estimator suffers from multiple shortcomings such as numerical instability (cf.~e.g.~\cite[Section 1]{maritz1968smooth}, \cite[Section 1.9]{maritz2018empirical}, \cite[Section 6.1]{efron2021computer}) and lack of regularity properties, including, notably, the desired monotonicity property of the Bayes rule $\fbest$ (see \cite{HS83}).

In another approach to the empirical Bayes problem, known as ``$g$-modeling'' \cite{efron2014two},
one tries to mimic the structure of the Bayes estimator by substituting the prior in the posterior mean with a suitable estimator. 
It has recently been shown that optimal regret can be attained by $g$-modeling estimators based on the minimum distance methodology that first finds the best approximation $p_{\hat \pi}$ to the empirical distribution of the training data under suitable distances then applies the Bayes rule with the learned prior $\hat \pi$.
A prominent example is the
nonparametric maximum likelihood estimator (NPMLE) 
\begin{equation}
\hat \pi_{\sf NPMLE} = \argmax_Q \prod_{i=1}^n p_Q(X_i)
    \label{eq:NPMLE}
\end{equation}
which minimizes the Kullback-Leibler divergence. 
Thanks to their Bayesian form, these estimators inherit the desired regularity of Bayes estimator (such as monotonicity) and lead to  more stable, accurate, and interpretable estimates in practice. 
Recently, \cite{jana2022optimal} has shown that a suite of minimum-distance estimators, including the NPMLE, attain the optimal regret similar to the Robbins estimator for both bounded or subexponential priors.  In addition, when $\pi$ has heavier (polynomial) tails, the NPMLE achieves the corresponding optimal regret while Robbins estimator provably fails \cite{shen2022empirical}. 
However, the downside of $g$-modeling is its much higher computational cost. For example, \prettyref{eq:NPMLE} entails solving an infinite-dimensional convex optimization. Although in one dimension faster algorithms akin to Frank-Wolfe have been proposed \cite{L83general,jana2022optimal}, for multiple dimensions existing solvers essentially all boil down to maximizing the weights over a discretized domain \cite{koenker2014convex} which clearly does not scale with the dimension.


\subsection{Empirical Bayes via Empirical Risk Minimization}
\label{sec:ERM}

In this paper we propose a new approach for Poisson empirical Bayes by incorporating a framework based on \textit{empirical risk minimization} (ERM) and the needed technology from learning theory, notably, the \textit{offset Rademacher complexity}, refined via localization, to establish the optimality of the achieved regret. In contrast to $f$-modeling and $g$-modelling that aim at approximating the mixture density and the prior respectively, the main idea is to directly approximate the Bayes rule by solving a suitable ERM subject to certain structural constraints satisfied by the Bayesian oracle. 
We note that a similar technique has been applied earlier in \cite{barbehenn2022nonparametric} to the Gaussian model; however, the theoretical guarantees therein are highly suboptimal.

The benefits of the ERM-based methodology are manifold:
\begin{enumerate}
    \item Unlike the Robbins method, the constrained ERM produces an estimator that enjoys the same regularity as that of the Bayes rule, at a small permillage of the computational cost of $g$-modeling methods such as the NPMLE and other minimum-distance estimators.
    
    \item The ERM-based estimator is scalable to high dimensions and runs in time that is polynomial in both $n$ and the dimension $d$. In contrast, all existing algorithms for NPMLE are essentially grid-based and scales poorly with the dimension as $n^{\Theta(d)}$.

    \item The ERM approach invites powerful tools from 
    empirical processes theory (such as Rademacher complexity and variants) to bear on its regret.

    \item The flexibility of the ERM framework allows one to easily incorporate extra constraints or replace the function class by more powerful ones (such as neural nets) in order to tackle more challenging empirical Bayes problems in high dimensions for which there is no feasible proposal so far. 
\end{enumerate}

To summarize, the ERM can be seen as an alternative solution to the empirical Bayes problem, that excels over the Robbins method in terms of retaining the regularity properties of the Bayes estimator, and is computationally much efficient than the other existing non-parametric alternatives. We will also show that theoretically it achieves the optimal regret for certain light-tailed classes of priors. Whether these guarantees carry over to the heavy-tailed classes of prior, where the Robbins method is known to be suboptimal and NPMLE is known to be optimal \cite{shen2022empirical}, is beyond the scope of the current paper.

Next we describe the construction of the ERM-based empirical Bayes estimator in details. To derive the objective function for the ERM, note that using $\fbest(X)=\EE\qth{\theta|X}$, we have 
\begin{align*}
        \fbest = \argmin_f \EE[(f(X) - \theta)^2]& =\argmin_f \EE[(f(X))^2 - 2\theta f(X)]\\
        &=\argmin_{f}\EE\qth{f(X)^2-2Xf(X-1)},
\end{align*}
where we get the last step applying the identity 
$\EE\qth{\theta f(X)}=\EE\qth{Xf(X-1)}$ for $X\sim\Poi(\theta)$.
Since $\fbest$ is monotone, this naturally leads to the ERM-based estimator
\begin{align}\label{eq:erm-est}
 \ferm \in \argmin_{f \in \calF}\hat\EE [f(X)^2 - 2Xf(X - 1)],
\end{align}
 where 
 $\hat \EE [h(X)]\eqdef\frac 1n \sum_{i=1}^nh(X_i)$ denotes the empirical expectation of a function $h$ based on the sample $X_1,\dots,X_n$, and the minimization \prettyref{eq:erm-est} is over the class of monotone functions $\calF = \{f: f(x)\le f(x + 1), \forall x\ge 0\}$. 
We also note that the solution~\eqref{eq:erm-est} is only uniquely specified on the set $S\triangleq \{X_1, \ldots, X_n\}\cup \{X_1-1,\ldots,X_n-1\}$, which can be easily computed by an algorithm akin to isotonic regression (see 
\prettyref{lmm:erm_construction}).
We then extend this solution to the whole $\bbZ_+$ in a piecewise constant manner: 
for those $x<\min S$, set $\ferm(x) = 0$;
for those $x>\max S = X_{\max} \eqdef \max\{X_1,\ldots,X_n\}$, set $f(x)=f(X_{\max})$;
for the remaining $x\not\in S$, set
$\ferm (x) = \ferm (\max\{y\in S: y\le x\})$.
This natural piecewise constant extension clearly retains monotonicity.

 
 We note that the above construction of the ERM-based 
 empirical Bayes estimator can be done in a principled way for other mixture models than Poisson (see \prettyref{tab:erms}). Indeed, \cite{barbehenn2022nonparametric} was the first to 
apply this approach to the Gaussian mixture model. However,  only the \textit{slow rate} of $\frac{\polylog(n)}{\sqrt{n}}$  is obtained for the regret by applying standard empirical process theory. In addition, they use extra constraints, such as the ones based on bounded derivatives, bounds on the parameter space, etc. These constraints can be used to further improve upon the practical performances of the ERM estimator we use for the Poisson model; however the corresponding analysis is beyond the scope of the current paper. One of the major technical contributions of the present paper is to introduce a suitable version of the \textit{offset Rademacher complexity} \cite{liang2015learning} that leads to the \textit{fast rate}  of
$\frac{\polylog(n)}{n}$  (even with the optimal logarithmic factors!)

\begin{table}[H]
	\centering
	\begin{tabular}{|c|c|c|c|}
		\hline 
		Mixture & $p(X|\theta)$ & Bayes estimator & ERM Objective\\
		\hline 
		Geo$(\theta)$ 
		& $\theta^X (1-\theta)$ 
		& $1 - \frac{p_{\pi}(X + 1)}{p_{\pi}(X)}$ 
		& $\hat{\bbE}[f(X)^2 - 2f(X)  + 2f(X - 1)\indc{X > 0}]$\\
		\hline 
		NB$(r, \theta)$
		& $\binom{k+r-1}{k} (1-\theta)^r \theta^k$
		& $\frac{X+1}{X+r}\frac{p_{\pi}(X + 1)}{p_{\pi}(X)}$
		& $\hat{\bbE}[f(X)^2 - 2 \frac{X+1}{X+r} f(X-1) \indc{X > 0}]$\\
		\hline 
		$\mathcal{N}(\theta, 1)$
		& $ \frac{1}{\sqrt{2\pi}}\exp\pth{-\frac{(X-\theta)^2}{2}}$
		& $X + \frac{p'_{\pi}(X)}{p_{\pi}(X)}$
		& $\hat{\bbE}[f(X)^2 - 2Xf(X) + 2 f'(X)]$\\
		\hline 
		$\text{Exp}(\theta)$ 
		& $\theta\exp(-\theta X)$
		& $-\frac{p'_{\pi}(X)}{p_{\pi}(X)}$
		& $\hat{\bbE}[f(X)^2 - 2 f'(X)]$\\
		\hline 
	\end{tabular}
	\caption{ERM objectives for other mixture models: geometric, negative binomial, normal location, and exponential distributions.}
	\label{tab:erms}
\end{table}


\subsection{Regret optimality}

In addition to its conceptual simplicity and computational advantage, the ERM-based estimator comes with strong statistical guarantees which we now describe. Let $\calP[0,h]$ denote the class of all priors supported on the interval $[0,h]$ and $\subexpo(s)$ the set of all $s$-subexponential distributions on $\reals_+$, namely $\subexpo(s)=\sth{G:G([t,\infty)])\leq 2e^{-t/s},\forall t>0}$. Our main result is as follows:

\begin{theorem}[Regret optimality of ERM-based estimators]
\label{thm:main}
	Let $\ferm$ be defined in \eqref{eq:erm-est}, with $\calF$ the class of all monotone functions on $\integers_+$. Then there exist s a constant $C>0$ such that for any $h,s>0$, 
	$$\sup_{\pi\in \calP([0,h])} \Regret_\pi(\ferm)\leq \frac {C\max\{1, h\}^3}n\pth{\log n\over \log\log n}^2,\quad 
	\sup_{\pi\in {\subexpo(s)}} \Regret_\pi(\ferm)\leq \frac {C\max\{1, s\}^3}n (\log n)^3.$$
\end{theorem}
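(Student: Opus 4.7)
The plan is to begin with the regret identity $\Regret_\pi(\ferm) = \EE[(\ferm(X) - \fbest(X))^2]$, valid since $\fbest(X) = \EE[\theta\mid X]$. Because the Poisson posterior mean is monotone on $\integers_+$ (a classical consequence of the Poisson family's monotone likelihood ratio), we have $\fbest \in \calF$; then the ERM inequality $\hat\EE[\hat\ell(\ferm)] \le \hat\EE[\hat\ell(\fbest)]$ with $\hat\ell(f,x) = f(x)^2 - 2x f(x-1)$, combined with the Poisson identity $\EE[X h(X-1)] = \EE[\theta h(X)]$, yields the basic oracle bound
\begin{equation*}
\EE[g(X)^2] = \EE[\hat\ell(\ferm) - \hat\ell(\fbest)] \le (\EE - \hat\EE)[\hat\ell(\ferm) - \hat\ell(\fbest)], \qquad g := \ferm - \fbest.
\end{equation*}
Expanding $\hat\ell(f,x) - \hat\ell(\fbest,x) = g(x)^2 + 2\fbest(x)g(x) - 2x\, g(x-1)$ splits the right side into a quadratic piece (to be absorbed by localization) and a linear noise piece (the principal Rademacher term).

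Next I would truncate. Standard Poisson tail estimates give $X_{\max} \le N$ with probability at least $1 - n^{-10}$, where $N \lesssim \max\{h, \log n/\log\log n\}$ when $\pi \in \calP([0,h])$ (using the super-exponential Poisson tail) and $N \lesssim s\log n$ when $\pi \in \subexpo(s)$. On this event, the PAVA-type description of the ERM in \prettyref{lmm:erm_construction} forces $\ferm$ to take values in $[0,N+1]$, so it lies in the localized monotone class $\calF_N := \{f : \integers_+ \to [0,N+1] : f \text{ nondecreasing and constant on } [N,\infty)\}$; the complementary event contributes $O(1/n)$ via the crude bound $\ferm(X) \le X+1$. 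The task then reduces to controlling the offset empirical process
\begin{equation*}
\EE \sup_{f \in \calF_N} \sth{ \frac{2}{n}\sum_{i=1}^n \epsilon_i \qth{\fbest(X_i) g(X_i) - X_i g(X_i-1)} - c\,\EE[g(X)^2] }
\end{equation*}
obtained after symmetrization together with a Bernstein-type absorption of the empirical $g^2$ term into the population $\EE[g^2]$. Combining this with the metric entropy of bounded monotone sequences on $\{0,\ldots,N\}$ and Dudley chaining in the presence of the quadratic offset yields the fast rate $O(N^3/n)$, which specializes to the two stated bounds after substituting the respective values of $N$.

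The hardest part, I expect, is the shifted argument $X_i\, g(X_i-1)$, which does not fit the offset Rademacher template of \cite{liang2015learning} where the noise pairs with $g(X_i)$ itself. The plan is two-pronged: first, exploit the monotonicity of $g$ to control $g(X_i-1)$ by $g(X_i)$ up to a local increment that chaining already accounts for at scale $\epsilon$; second, apply a sample analogue of the Poisson shift identity to reindex $X_i$ by $X_i-1$, thereby converting the shifted noise into anchored noise at a cost quadratic in $g$ that the offset can absorb. A secondary subtlety is that the truncation level $N$ must be sharp enough to produce the factor $(\log n/\log\log n)^2$ in the bounded case, which requires the full super-exponential Poisson tail bound rather than a naive exponential one; the extra $\log n$ in the subexponential case is inherited from the tail of $\pi$ itself.
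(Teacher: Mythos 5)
Your outline captures the right high-level ingredients---the identity $\Regret_\pi(\ferm)=\EE[(\ferm-\fbest)^2]$, a localized class based on a Poisson-tail truncation, symmetrization, and an offset empirical process---but there is a genuine gap at the very start that the later steps inherit. Your basic oracle bound uses only the ERM optimality $\hat\EE[\hat\ell(\ferm)]\le\hat\EE[\hat\ell(\fbest)]$. Rearranged, this gives $\hat\EE[g^2]\le(\hat\EE-\EE)[\text{linear noise}]$ with $g=\ferm-\fbest$, i.e.\ a bound on the \emph{empirical} second moment of $g$, not on the population regret $\EE[g^2]$. You say you will upgrade to an offset process by ``Bernstein-type absorption of the empirical $g^2$ term into the population $\EE[g^2]$,'' but as written the quadratic term $(\EE-\hat\EE)[g^2]$ has the wrong sign to produce a negative offset $-c\,\EE[g^2]$; naively you would need $\hat\EE[g^2]\ge(1+c)\EE[g^2]$, which is not what concentration gives you. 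The paper gets around this cleanly: because $\calF$ is convex, the first-order optimality of $\ferm$ yields the strictly stronger inequality $\hat\sfR(\fbest)-\hat\sfR(\ferm)\ge\hat\EE[(\fbest-\ferm)^2]$, and that extra $\hat\EE[g^2]$ is precisely what lands the offset in the symmetrized process (it is split between \prettyref{eq:m0_1} and \prettyref{eq:m0_2}). This convexity step is not cosmetic; without it you need a separate, localized Talagrand/peeling argument to relate $\hat\EE[g^2]$ to $\EE[g^2]$, which your sketch does not supply.

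The remaining steps also diverge from the paper in ways worth flagging. Where you invoke metric entropy of bounded monotone sequences and Dudley chaining ``in the presence of the quadratic offset,'' the paper never chains: it rewrites the supremum as $\sum_{x}(\epsilon(x)-N(x)/b)(f(x)-\fbest(x))^2$ with $\epsilon(x)=\sum_i\epsilon_i\indc{X_i=x}$, observes the extremal monotone $f$ is a threshold step function by a linear-programming corner argument, and reduces everything to Binomial tail bounds on $\max\{\epsilon(x)-cN(x),0\}$ and a running-max penalized partial sum (Lemmas~\ref{lmm:binom_tail_rad}, \ref{lmm:binom_penal}). It is not obvious your chaining route delivers the sharp $(\log n/\log\log n)^2$ factor in the bounded case; the paper's factor comes from $\EE[X_{\max}^2]$ rather than an entropy integral. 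Your treatment of the shifted noise $X_i\,g(X_i-1)$ is also under-specified: the phrase ``sample analogue of the Poisson shift identity'' does not describe a concrete manipulation, whereas the paper simply reindexes the sum over $x$ to pair $\epsilon(x)\fbest(x)$ with $(x+1)\epsilon(x+1)$, after which the square is handled with $2aW-bW^2\le a^2/b$. Two small points: the claim $\ferm(X)\le X+1$ on the tail event is not correct (\prettyref{lmm:max_support} gives $\ferm\le X_{\max}$, and the paper controls the bad event via $\EE[X_{\max}^4]$ and Cauchy--Schwarz), and your localized class $\calF_N$ is a deterministic truncation whereas the paper works with the data-dependent class $\calF_*$ in \prettyref{eq:calFstar} keyed to $X_{\max}$, which requires the symmetrization lemma~\ref{lmm:symmetrization} to be adapted to data-dependent classes.
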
	

The regret bounds in \prettyref{thm:main} match the minimax lower bounds in \cite[Theorem 2]{polyanskiy2021sharp} up to constant factors, thereby establish the strong optimality of the ERM-based empirical Bayes estimators.
Finally, as a side remark, we mention that, one can show that a monotone projection of the Robbins estimator, given by $\hat
		f_{\text{\sf mono-Rob}}=\argmin_{f \in \calF} \hat \EE[(f(X)-\frob(X))^2]$, also attains similar regret guarantees as in \prettyref{thm:main}.
  This is outside the scope of the current paper.

\subsection{Multiple dimensions}
The ERM-based estimator \prettyref{eq:erm-est} can be easily extended to the $d$-dimension Poisson model. For clarity, we use the bold fonts to denote a vector, e.g., $\btheta=\pth{\theta_1,\dots,\theta_d},\vtheta_{i}=(\theta_{i1},\dots,\theta_{id}),\vX=(X_1,\dots,X_d),\vX_{i}=(X_{i1},\dots,X_{id}), \vx=(x_1,\dots,x_d)$, etc. Let $\pi$ be a prior distribution on $\reals_+^d$. Consider the following data-generating process
\begin{align}
	\vtheta_i\iiddistr \pi \qquad X_{ij} \inddistr \Poi(\theta_{ij}).
\end{align}
Note that the marginal distribution of the multidimensional Poisson mixture is given by 
$$p_\pi(\vx)=\int_{\vtheta} \prod_{i=1}^d e^{-\theta_i}{\theta_i^{x_i}\over x_i!} d\pi(\vtheta),\quad \vx\in\integers_+^d.$$
Similar to \prettyref{eq:regret}, let us define the  regret of a given estimator  $\vf:\bbZ_+^d\to \bbR_+^d$ as
\begin{align}
	\Regret_\pi(\vf) =\EE\qth{\|\vf(\vX)-\vtheta\|^2}-\EE\qth{\|\vfbest(\vX)-\vtheta\|^2},
\end{align}
where $\vX\sim p_\pi$ is a test point independent from the training sample $\vX_1,\ldots,\vX_n\iiddistr p_\pi$.
For each $\vf$, let $\vf = (f_1, \cdots, f_d)$ where $f_i:\bbZ_+^d\to \bbR_+$. Denote by 
$\vfbest$ the Bayes estimator, whose $i$-th coordinate $f_i^*$ is given by 
$$
	\fbest_i(\vx)
	=\EE[\theta_{i}| \vx]
	={\int_{\vtheta} \theta_i \prod_{j=1}^d e^{-\theta_i}{\theta_i^{x_i}\over x_i!} d\pi(\vtheta)\over p_{\pi}(\vx)}=(x_{i} + 1)\frac{p_{\pi}(\vx + \ve_i)}{p_{\pi}(\vx)},\quad i=1,\dots,d,$$
where $\ve_i$ denote the $i$-th coordinate vector. 
Using Cauchy-Schwarz, one can show that the Bayes estimator for the $i$-th coordinate is increasing in the $i$-th coordinate of the input if all other coordinates are fixed, i.e.,
\begin{align}
	\fbest_i(\vx)\leq \fbest_i(\vx+\ve_i), \quad\forall i=1,\dots,d, \quad \forall \vx\in\bbZ_+^d
\end{align}
This leads to the following ERM procedure.
\begin{align}\label{eq:est-multidim}
	\hat{\vf}_{\mathsf{erm}}&=\argmin_{\vf\in\calvF}\quad \hat{\EE}\qth{\norm{\vf(\vX)}^2 - 2\sum_{j=1}^d X_{j} f_j(\vX - \ve_i)},\nonumber\\
	\calvF=&	\{\vf: \bbZ_+^d \to \bbR_+^d: f_i(\vx)\le f_i(\vx+\ve_i), \forall i=1, \cdots, d, \forall \vx\in\bbZ_+^d\}.
\end{align}
We again note that $\hat{\vf}_{\mathsf{erm}}$ is not uniquely defined for all $\vx\in \bbZ_+^d$. To specify a minimizer, note that $(\ferm)_j$, the $j$-th coordinate of $\hat{\vf}_{\mathsf{erm}}$, is uniquely defined on $S\triangleq \{\vX_i\}\cup\{\vX_i-\ve_j\}$. 
We may extend it to $\integers_+^d$ in the same manner as the one-dimensional case of \prettyref{eq:erm-est} in a piecewise constant manner. 
That is, for each $\vx\not\in S$, 
if there exists $y \ge 0$ such that $\vx - y\ve_j\in S$, 
we set $(\ferm)_j(\vx) = (\ferm)_j(\min_{y\ge 0\atop \vx - y\ve_j\in S} \vx - y\ve_j)$. 
Otherwise, set $(\ferm)_j(\vx)=0$. 
By convention, we also define $(\ferm)_j(-\ve_j)=0$. 

\begin{theorem}\label{thm:main_multidim}
	The ERM  estimator \prettyref{eq:est-multidim}
 satisfies the following regret bounds whenever $n \ge d$: 
	\begin{enumerate}
		\item If $\pi$ is supported on $[0,h]^d$, then $\Regret_\pi(\vferm)\leq O(\frac dn\max\{c_1, c_2h\}^{d+2}(\frac{\log (n)}{\log \log (n)})^{d+1})$ ; 
		
		\item If all marginals of $\pi$ are $s$-subexponential 
  for some $s>0$, then\\ $\Regret_\pi(\vferm)\leq O(\frac dn{(\max\{c_3, c_4s\}\log (n))^{d+2}})$, 
	\end{enumerate}
    where $c_1, c_2, c_3, c_4 > 0$ are absolute constants. 
\end{theorem}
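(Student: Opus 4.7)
\textbf{Proof proposal for Theorem \ref{thm:main_multidim}.} The plan is to mirror the one-dimensional argument behind Theorem \ref{thm:main} coordinate by coordinate, while carefully tracking how the offset Rademacher complexity scales in $d$ via a slicing/chaining argument. First, by the Poisson identity $\EE[X_j g(\vX-\ve_j)] = \EE[\theta_j g(\vX)]$ applied to each coordinate separately, together with $f_j^*(\vx)=\EE[\theta_j\mid\vX=\vx]$, one obtains the basic decomposition
\begin{align*}
\Regret_\pi(\vf) \;=\; \sum_{j=1}^d \EE_{\vX\sim p_\pi}\!\bigl[(f_j(\vX)-f_j^*(\vX))^2\bigr].
\end{align*}
Setting $L(\vf)\eqdef \|\vf(\vX)\|^2-2\sum_j X_j f_j(\vX-\ve_j)$, the ERM inequality $\hat L(\vferm)\le \hat L(\vfbest)$ and the usual basic inequality give
\begin{align*}
\Regret_\pi(\vferm) \;\le\; \EE\sup_{\vf\in\calvF}\bigl\{(L(\vf)-L(\vfbest))-(\hat L(\vf)-\hat L(\vfbest))\bigr\}.
\end{align*}
I would then reproduce the offset Rademacher symmetrization used in the 1D case (adapted to this non-standard loss as in \cite{liang2015learning}) to reduce the right-hand side to controlling, for independent Rademacher signs $\epsilon_i$,
\begin{align*}
\EE\sup_{\vf\in\calvF}\frac{1}{n}\sum_{i=1}^n\sum_{j=1}^d\Bigl[2\epsilon_i\Delta_j(\vX_i)\;-\;c\,\Delta_j(\vX_i)^2\Bigr],\qquad \Delta_j\eqdef f_j-f_j^*.
\end{align*}
A key structural observation is that $\calvF$ decouples across coordinates ($f_j$ is constrained only to be nondecreasing in $x_j$), so this reduces to $d$ separate coordinate-monotone offset Rademacher problems.

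Second, I would truncate to a finite box $[0,R]^d$, choosing $R\asymp \max(1,h)$ in the bounded case and $R\asymp \max(1,s)\log n$ in the subexponential case so that $\Pr[\vX\notin[0,R]^d]\lesssim d/n$. For the tail, I would use the a priori sup-norm bound on $f_j^*$ (either $h$ in the bounded case or $\lesssim s\log n$ via the standard subexponential Bayes-estimator tail bound) together with the monotonicity-preserving construction of $\vferm$, to argue that the contribution from outside the box is at most $O(d\,(\max(h,s\log n))^2/n)$, which is absorbed by the main term.

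Third, and this is where the main work lies, I would bound the offset Rademacher complexity of the class $\calF_j^{R,B}$ of coordinate-monotone functions on $[0,R]^d$ bounded by $B$ (with $B$ equal to the $L^\infty$ bound on $f_j^*$ on the truncation box). Here the plan is a slicing argument: for each fixed value $\vx_{-j}$ of the $d-1$ remaining coordinates, $f_j(\cdot,\vx_{-j})$ is a monotone bounded function on $\{0,\dots,R\}$, which by the 1D analysis admits covering number $R^{O(\log R)}$ at a scale that yields an offset Rademacher rate of $O(B^2(\log R)^2/n)$ per slice (with the improved $(\log R / \log\log R)^2$ factor in the bounded-$B$ case via the sharper 1D argument). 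Since the $(d-1)$-dimensional index $\vx_{-j}\in\{0,\dots,R\}^{d-1}$ contributes a union bound of at most $R^{d-1}$ slices, the localized offset Rademacher complexity scales as
\begin{align*}
O\!\Bigl(\frac{B^2\,R^{d-1}\,(\log R)^2}{n}\Bigr)\quad\text{or, in the bounded case,}\quad O\!\Bigl(\frac{B^2\,R^{d-1}}{n}\bigl(\tfrac{\log R}{\log\log R}\bigr)^{2}\Bigr).
\end{align*}
Summing over the $d$ coordinates, plugging in $R$ and $B$, and using $B\lesssim R$ to balance, produces the announced bounds $O(\tfrac{d}{n}\max(c_1,c_2h)^{d+2}(\tfrac{\log n}{\log\log n})^{d+1})$ and $O(\tfrac{d}{n}(\max(c_3,c_4s)\log n)^{d+2})$.

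The main obstacle will be the slicing step: a naive union bound over the $R^{d-1}$ slices of the offset Rademacher supremum loses a $\log(R^{d-1})=O(d\log n)$ factor per coordinate, which could inflate the final exponent of $\log n$ beyond $d+1$ or $d+2$. To avoid this I expect one needs a chaining / peeling argument that exploits the fact that the offset term $-c\,\Delta_j^2$ already provides variance control; this is precisely the mechanism by which the 1D proof attains the sharp $(\log n/\log\log n)^2$ and $(\log n)^3$ factors, and the multidimensional argument should recursively apply the same localization inside each slice. A secondary difficulty is the piecewise-constant extension of $\ferm$ beyond the data set $S$: one must verify that the extension neither changes the ERM value on the sample nor violates the sup-norm bound used in the truncation step, so that the analysis above genuinely controls the test-time regret.
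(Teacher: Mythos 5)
Your high-level architecture (coordinatewise reduction, offset symmetrization, truncation, moment control of the coordinate maxima) matches the paper's, but the plan has genuine gaps at exactly the steps where the multidimensional work lies. First, the obstacle you flag — a union bound over the $R^{d-1}$ slices losing a $d\log n$ factor, to be repaired by chaining/peeling — is not how the argument goes, and your proposed fix is left undeveloped. In the paper's proof (\prettyref{lmm:U1U2-bounds}), the monotonicity constraint on $f_j$ acts only along the $j$-th coordinate, so on distinct columns $C_j(\vx')$ the values of $f_j$ are unconstrained relative to each other; hence the supremum over $\vf$ of the sum over columns is at most the sum over columns of per-column suprema, and each per-column \emph{conditional expectation} is bounded by the one-dimensional offset bound, a deterministic function of $h$ and $X_{j,\max}$ (of order $h^2(1+X_{j,\max})+(1+X_{j,\max})^2$). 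One then multiplies by the number of nonempty columns, at most $\prod_{k\neq j}(1+X_{k,\max})$, and finishes with the moment bound $\EE\qth{(1+X_{j,\max})^\beta\prod_{k\neq j}(1+X_{k,\max})}\lesssim M^{d-1+\beta}$ (\prettyref{lmm:xmax-multidim}). No covering numbers, union bound, or chaining appear anywhere, and no logarithmic loss in $d$ arises; your proposal is missing this mechanism, which is the actual content of the extension to $d$ dimensions.

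Second, the offset complexity you write down, with linear term $2\epsilon_i\Delta_j(\vX_i)$, is not the one this loss produces: symmetrizing $\|\vf(\vX)\|^2-2\sum_j X_jf_j(\vX-\ve_j)$ yields multipliers of the form $\fbest_j(\vX_i)(\fbest_j(\vX_i)-f_j(\vX_i))-X_{ij}(\fbest_j(\vX_i-\ve_j)-f_j(\vX_i-\ve_j))$, and after regrouping in the $\vx$-domain one must control a term supported on points with $N(\vx)=0$ but $N(\vx+\ve_j)>0$ (the $t_0(n)$ term in the paper), which has no negative quadratic to lean on and is handled by a separate conditional binomial argument; this piece is absent from your plan. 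Relatedly, your localization bounds $f_j$ by $\|f_j^*\|_\infty$, whereas the ERM is only bounded by $X_{j,\max}$ (\prettyref{lmm:max_support}), so the correct class is $\calvF_*$ with threshold $\max\{\fbest_j(\vX_i),X_{j,\max}\}$. Finally, your truncation radius $R\asymp\max(1,h)$ in the bounded case is too small: even for constant $h$ the Poisson observations have maximum of order $\frac{\log n}{\log\log n}$, so one must take $R\asymp\max\{1,h\}\frac{\log n}{\log\log n}$ (\prettyref{lmm:bounded_prior}); with your stated $R$ the slice count and the final exponents do not reproduce the claimed $\max\{c_1,c_2h\}^{d+2}(\frac{\log n}{\log\log n})^{d+1}$ bound. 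For the subexponential case the paper also truncates the \emph{prior} (not the observation range) and compares mean squared errors via \prettyref{lmm:prior-truncate}; your observation-truncation route would need its own justification that the out-of-box regret contribution is $O(d\,\mathrm{poly}(s\log n)/n)$ uniformly over the class, which you assert but do not establish.
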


We conjecture these regret bounds in Theorem \ref{thm:main_multidim} are nearly optimal and factors like $(\log n)^d$ are necessary. Indeed, for the Gaussian model in $d$ dimensions, the minimax squared Hellinger risk for density estimation is shown to be at least 
$O((\log n)^d/n)$ for subgaussian mixing distributions and the minimax regret is typically even larger. 
A rigorous proof of matching lower bound for Theorem \ref{thm:main_multidim} will likely involve extending the regret lower bound based on Bessel kernels in \cite{polyanskiy2021sharp}
to multiple dimensions; this is left for future work.

\begin{figure}[t]
	
	{%
		\subfigure[Latent $\vtheta_i$'s.]{%
			\label{fig:triangle}
			\includegraphics[width=0.3\textwidth]{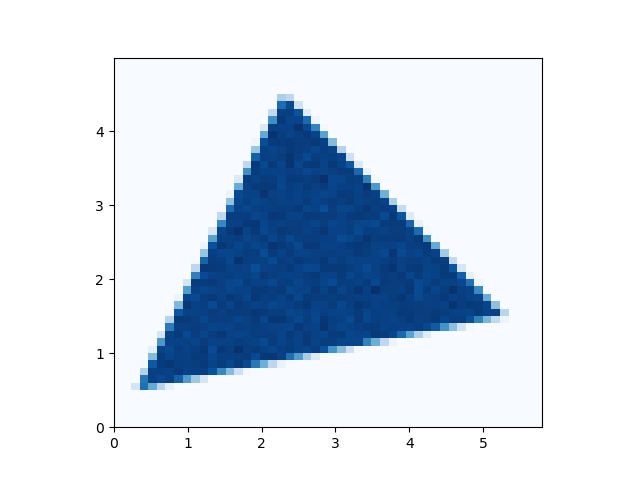}
		}\qquad 
		\subfigure[Observations $\vX_i$'s.]{%
			\label{fig:triangle_obs}
			\includegraphics[width=0.3\textwidth]{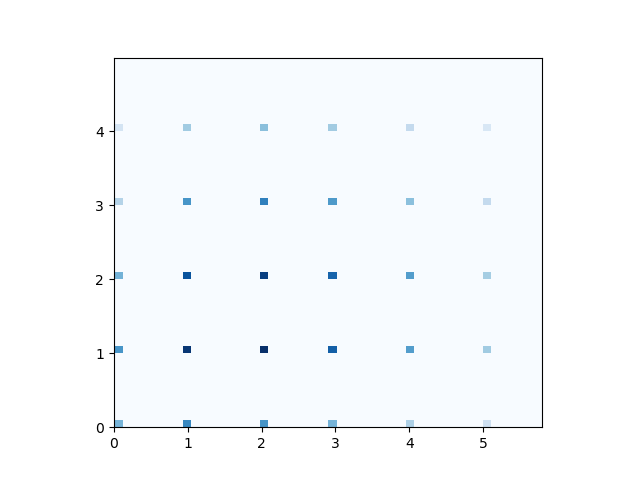}
		}
	    \subfigure[Denoised $\vferm$.]{%
	    	\label{fig:triangle_erm}
	    	\includegraphics[width=0.3\textwidth]{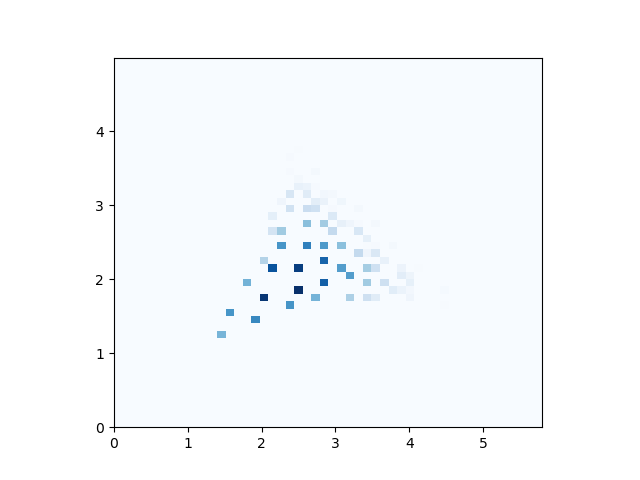}
	    }
	}
{\caption{A two-dimensional experiment with $n=10^6$: Left: $\vtheta_i$'s are sampled uniformly from a triangle. Middle: the observations $\vX_i$'s are drawn independently from $\Poi(\vtheta_i)$, with their empirical distribution shown on the grid $\integers_+^2$ (notice that this is also the MLE estimator for $\vtheta$, hence very different from the empirical Bayes solution). Right: the empirical Bayes denoised version obtained by applying $\ferm$ in \eqref{eq:est-multidim} to $\vX_i$'s.}}
 \label{fig:triangle_plots}
\end{figure}

\begin{remark}[Time complexity]
For the statistical rate of ERM in multiple dimensions to be meaningful, we require $d$ to be significantly smaller than $n$. Nonetheless, even in the dimensions where the regret in \prettyref{thm:main_multidim} is vanishing, the ERM method is computationally much more scalable, compared with the conventional approach based on NPMLE or other minimum-distance estimators. 

To elaborate on this, ERM is a linear program and has a dedicated solver due to its special form. 
NPMLE is an infinite-dimensional convex optimization, and the prevailing solver either discretizes the domain (at least $\sqrt{n}$ level in order to be statistically relevant, thus requires a grid of size $n^{\Theta(d)}$) or runs Frank-Wolfe style iteration, 
which is only known to converge slowly at $\frac{1}{t}$ rate \cite{L83general} and requires mode finding that is expensive in multiple dimensions. 
In contrast, the ERM approach scales much better with the dimension. To evaluate the $d$-dimensional ERM \prettyref{eq:est-multidim}, 
as we will demonstrate in Remark \ref{rmk:ERM-ddim},
if $\ell$ is the number of distinct vector-valued observations $\vX_1,\ldots,\vX_n$, our algorithm runs in $O(d\ell^2)\le O(dn^2)$ time (apart from reading the sample of size $n$). 
An almost linear time $O(d\ell\log \ell)$ algorithm (which is how we implemented in the simulations), exists but is beyond the scope of this paper. (We will describe the basic idea in \prettyref{app:stack}.)


On the empirical side, 
we demonstrate the multidimensional feasibility of ERM by running a simulation with $\vtheta_1,\ldots,\vtheta_n$ sampled uniformly from a triangle with $n=10^6$ and compute the empirical Bayes denoiser $\vferm$ in \prettyref{eq:est-multidim} to $\vX_i\inddistr \Poi(\vtheta_i)$. 
Here, we see that $\vferm$ can recover the triangular structure of the prior, as in \prettyref{fig:triangle_plots}. 
To further compare the computational costs of ERM and minimum distance methods, 
we did a comparison in the statistical software $R$ with the popular package ``REBayes" \cite{koenker2017rebayes} and the results are as follows. With the prior $\mathsf{Unif}(4,30)$ and sample sizes $n=50,500,5000,50000$, we ran both REBayes and ERM 100 times and found that on average the ERM is respectively $21, 50, 212, 588$ times faster. This improvement is even more pronounced ($25, 58, 227.5, 2160$ times) if we supply the empirical distribution to the ERM instead of the full sample.

\end{remark}

\begin{remark}[Comparison with $f$-modelling]
	While both $f$-modelling (i.e.~the Robbins estimator) and the ERM estimator $\ferm$ are asymptotically optimal, 
	we demonstrate more concretely the advantage of $\ferm$ over Robbins. The shortcomings of the Robbins method have been widely observed in practice and  discussed in the existing literature. Most recently, it has been demonstrated in \cite{jana2022optimal} extensively through both simulated and real data experiment. 
	Expanding on \prettyref{fig:triangle}, which compares the performance of the multidimensional Robbins method and $\ferm$ under a uniform prior on the 2d triangle, for $n=10^k, k=4,5,6,7$, we found that the Robbins method achieved a regret of $0.356, 0.0575, 0.00771, 0.00116$ and $\ferm$ achieved a regret of $0.0748, 0.0161, 0.00276, 0.000463$, suggesting a much better performance. 
	On another experiment, 
	we also compared the methods in dimensions $1,2,3,4$ using a product of $\text{Exp}(2)$ distributions as prior, fixing $n=10000$. 
	The Robbins method achieved regrets $0.0125, 0.0607, 0.185, 0.427$; $\ferm$ achieved regrets $0.00422, 0.0208, 0.0660, 0.161$.
\end{remark}

\subsection{Related work}

Empirical Bayes estimation for the Poisson means incorporating shape constraint has a long research thread. However, the majority of the work relies on approximating the Robbins estimator using monotone functions. For example, \cite{maritz1966smooth} used linear approximation to the Robbins estimator and \cite{maritz1969empirical} represented the marginal distribution $p_\pi$ based on a monotone ordinate fit to the Robbins and then used it to compute a maximum likelihood estimation of the ordinates. Both of these papers focus on numerical comparison of the corresponding error guarantees; see \cite[Section 3.4.5]{maritz2018empirical} for a concise exposition. In recent work, \cite{BGR13} discussed the numerical benefits of first performing a Rao-Blackwellization on the Robbins estimator and then using an isotonic regression to impose the monotonicity of the final estimator. An important theoretical contribution to the monotone smoothing of any given empirical Bayes estimator has been proposed in \cite{van1977monotonizing}. Using the monotone likelihood ratio property of the Poisson distribution, it is shown that any estimator (e.g., the Robbins estimator) can be made monotone without increasing the regret. In contrast, our main estimator is computed directly via minimizing an empirical version of the regret. It might be possible to use the monotone smoothing of \cite{van1977monotonizing} to further improve the ERM-estimator which is not pursued in this work.

 As mentioned in Section \ref{sec:ERM}, the application of empirical risk minimization in empirical Bayes has been introduced in the one-dimensional normal mean model by \cite{barbehenn2022nonparametric}. Using the monotonicity of the posterior mean, they construct an empirical Bayes estimator by solving the ERM under monotonicity constraint (see \prettyref{tab:erms}). However, the regret bound they establish is of the slow rate $\polylog(n)\over \sqrt n$ which is highly suboptimal, compared with the nearly optimal rate of $O(\frac{(\log n)^5}{n})$ by \cite{JZ09} (based on the $g$-modeling approach via NPMLE) and $O(\frac{(\log n)^8}{n})$ 
by \cite{li2005convergence} (based on the $f$-modeling approach of polynomial kernel density estimates). As mentioned earlier, the NPMLE is computationally expensive, especially in multiple dimensions due to the reliance on grid-based approximation \cite{koenker2014convex,soloff_multivariate_2021}. 
In contrast, as mentioned before, ERM-based estimators algorithm can be easily constructed for multiple or high dimensions.

The rest of the paper is organized as follows.
\prettyref{sec:offset-rademacher} 
provides a regret upper bound on the ERM-based estimator in one dimension in terms of the offset Rademacher complexities, and a proof sketch for \prettyref{thm:main}. 
 \prettyref{sec:multidim} contains the analysis for the multidimensional ERM-estimator and a proof sketch of \prettyref{thm:main_multidim}. 
Omitted proofs are provided in the appendices.

\section{Regret guarantees for the ERM estimator via Offset Rademacher complexity}\label{sec:offset-rademacher}

\subsection{The ERM algorithm}
As mentioned in the last section, our proposed estimator is based on ERM framework. In many statistical problems, the statistician intends to find a function $f$ that approximates a target statistic $s(X)$ in order to minimize the error $\EE\qth{\ell\pth{s(X),f(X)}}$ for some suitable loss function $\ell$. In the ERM framework, the population average is replaced by the empirical average $\hat \EE\qth{\ell\pth{s(X);f(X)}}$ over the training sample. There is a rich literature on using such methods to approximate nonparametric target functions. See, for example, \cite{nemirovskii1985nonparametric,van1990estimating}  for regression problems, \cite{barron1991complexity,barron1991minimum,barron1994approximation} for penalized empirical risk minimization,  \cite{birge1993rates,lugosi1995nonparametric} for consistency results of general nonparametric ERM-estimators, etc. In this paper, we aim to approximate the nonparametric target function  $\fbest$ (the Bayes rule) by minimizing $\EE\qth{(\fbest(X)-f(X))^2}$. As shown in \prettyref{sec:ERM}, in the Poisson mixture model, this can be equivalently expressed as minimizing $\EE\qth{f(X)^2-2Xf(X-1)}$ and we minimize the corresponding empirical loss over the class of all monotone functions.
Isotonic minimization of such quadratic loss is easy to compute; \cite{best_active_1990} showed that monotone projection can be done in linear time. In the following lemma we present one such minimization algorithm that we use in numerical analyses. 
The proof is deferred to \prettyref{app:technical}. 
\begin{lemma}\label{lmm:erm_construction}
	Let $a_1 < \cdots < a_n$ be a sequence of non-negative integers and $\{v_i\}_{i=1}^n,\{w_i\}_{i=1}^n$ be two non-negative sequences with $v_n>0$ and $\max\{v_i, w_i\}>0$ for all $i$. 
	Consider the iterative $b_i$ 
	\[
	b_i = 
	\begin{cases}
		1 & i = 0\\
		1 + \argmin_{b_{i-1}\le i^*\le n} \frac{\sum_{i=b_{i-1}}^{i^*}w_i}{\sum_{i=b_{i-1}}^{i^*}v_i} & i \geq 1\\
	\end{cases}
	\]
	where the fraction is $+\infty$ whenever the denominator is 0, 
	and where tie exists at $\argmin$, choose biggest such $i^*$. 
	We stop at $b_m = n + 1$. Then the solution to $$\ferm = 
 \argmin_{f\in\calF} 
 \sum_{i=1}^n v_{i}f(a_i)^2 - 2w_{i}f(a_i)$$ is given as
	\[
	\forall i = 1, \cdots, m, \forall x: b_m \le x < b_{m+1}: 
	\ferm(a_x)= \frac{\sum_{i=b_{m}}^{b_{m+1}-1}w_i}{\sum_{i=b_{m}}^{b_{m+1}-1}v_i}.
	\]
\end{lemma}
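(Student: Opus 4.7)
The plan is to recognize the optimization as an instance of isotonic regression with (weighted) quadratic loss and then invoke the cumulative-sum / greatest-convex-minorant characterization of its solution, from which the formula in the lemma follows directly.

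First, since the sum only depends on the values $y_i \eqdef f(a_i)$, and the monotonicity constraint on $f$ restricted to $\{a_1 < a_2 < \cdots < a_n\}$ becomes $y_1 \le y_2 \le \cdots \le y_n$ (the extension to the rest of $\bbZ_+$ is irrelevant to the objective), the problem reduces to
$$\min_{y_1 \le \cdots \le y_n} \sum_{i=1}^n \bigl( v_i y_i^2 - 2 w_i y_i \bigr).$$
The assumptions $v_n > 0$ and $\max\{v_i, w_i\} > 0$ for each $i$ guarantee that the objective is coercive in the relevant directions and that a minimizer exists; degeneracies with $v_i = 0$ are handled by the $+\infty$ convention in the recursion. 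I would then introduce cumulative sums $V_k = \sum_{i\le k} v_i$, $W_k = \sum_{i \le k} w_i$, $V_0 = W_0 = 0$, and the ``cusum'' curve through the points $P_k = (V_k, W_k)$, $k = 0, 1, \ldots, n$.

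Second, I would invoke the classical block characterization of isotonic regression (Barlow--Bartholomew--Bremner--Brunk): the minimizer has a unique block decomposition $1 = c_0 < c_1 < \cdots < c_m = n+1$ on which $y_i$ is constant equal to the weighted average
$$\frac{W_{c_k-1} - W_{c_{k-1}-1}}{V_{c_k-1} - V_{c_{k-1}-1}},$$
and this partition is given by the vertices of the greatest convex minorant of the cusum curve. The slopes of the convex minorant, read left to right, are the block values $y_i$, which are automatically non-decreasing by convexity.

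Third, I would verify that the recursion defining $b_i$ is exactly the standard greedy construction of the convex minorant from the left: starting at the current breakpoint $P_{b_{i-1}-1}$ (initially $P_0$), the next vertex is the endpoint that minimizes the chord slope
$$\frac{W_{i^*} - W_{b_{i-1}-1}}{V_{i^*} - V_{b_{i-1}-1}} \;=\; \frac{\sum_{j=b_{i-1}}^{i^*} w_j}{\sum_{j=b_{i-1}}^{i^*} v_j},$$
and the tie-breaking rule (choosing the largest such $i^*$) selects the rightmost point lying on the supporting line, which is the correct vertex of the convex minorant. Substituting yields the stated formula $\ferm(a_x) = (\sum_{b_m}^{b_{m+1}-1} w_i)/(\sum_{b_m}^{b_{m+1}-1} v_i)$ on each block.

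The main obstacle is not the geometric picture (which is standard) but handling the degenerate cases: blocks where some $v_i$ vanish, and ties in the argmin. I expect to deal with these by a short induction on the block index, showing at each stage that (i) the minimizing ratio at step $k$ is strictly smaller than the minimizing ratio at step $k+1$, so monotonicity across blocks is automatic; and (ii) the ``largest $i^*$'' convention together with the $+\infty$ convention for zero denominators ensures the algorithm advances past all such degenerate indices into the same block, so the resulting step function is a genuine minimizer. Beyond these bookkeeping points the result is just the textbook up-and-down-blocks / PAVA correctness theorem specialized to the weighted quadratic loss.
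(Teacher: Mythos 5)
Your route is genuinely different from the paper's. The paper proves the lemma from scratch by a variational argument: it takes the first constancy block $[1,i_2]$ of the minimizer, perturbs the values on prefixes of that block, and reads off from the first-order conditions that the block value equals $m(1,i_2)=\sum_{i\le i_2}w_i/\sum_{i\le i_2}v_i$ and that $m(1,j)\ge m(1,i_2)$ for all $j$, with strict inequality for $j>i_2$ (this is exactly where $\max\{v_i,w_i\}>0$ and $v_n>0$ enter), so that the largest-argmin recursion reproduces the ERM block by block. You instead reduce to finite-dimensional weighted isotonic regression and appeal to the classical cumulative-sum-diagram / greatest-convex-minorant (minimum-lower-sets) characterization, then check that the stated recursion is that greedy construction. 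When all $v_i>0$ this is a perfectly valid and arguably shorter proof, and your identification of the recursion with the chord-slope construction and of the tie-break with taking the rightmost touching point is correct.

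The gap is precisely the degenerate case you deferred. The textbook GCM/block theorem is stated for strictly positive weights; here $v_i=0$ (with $w_i>0$) is allowed, the ``data value'' $w_i/v_i$ is undefined, and the cusum points can share an abscissa. Your proposed fix --- an induction showing (i) block ratios strictly increase and (ii) the $+\infty$ convention carries the recursion past zero-denominator indices --- only establishes that the algorithm's output is well defined and monotone; it does not certify that this output minimizes the objective, which is the content of the lemma and is exactly where the hypotheses $\max\{v_i,w_i\}>0$ and $v_n>0$ do their work (the paper's strictness argument is devoted to this). To close it you would need either an optimality argument for the degenerate indices --- e.g.\ observe that in any minimizer an index with $v_i=0<w_i$ must satisfy $f(a_i)=f(a_{i+1})$ (the objective is strictly decreasing in $f(a_i)$ up to the monotonicity constraint, and $v_n>0$ guarantees a minimizer exists), merge such an index into its right neighbour, verify the recursion produces the same blocks on the merged data, and then invoke the positive-weight theorem --- or a direct verification of the first-order conditions as in the paper. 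With that supplement your argument is complete; as written, the optimality claim in the degenerate case is asserted rather than proved.
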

\begin{remark}
	Making the restriction $v_i\ge 0$ and $v_n>0$ ensures that our solution will be well-formed. 
     To apply this algorithm to estimate $\ferm$, 
     let $\{a_1, \cdots, a_k\}\subseteq \{1, \cdots, X_{\max}\}$ be such that either $N(a_i) > 0$ or $N(a_i + 1) > 0$.
     Here, $v_i = N(a_i)$ and $w_i=(a_i + 1)N(a_i + 1)$. 
     Our choice of $a_i$'s for $i=1, \dots, k$ ensures that $\max\{v_i, w_i\} > 0$, 
     and also $v_k > 0$. 
     
\end{remark}

\begin{remark}
\label{rmk:ERM-ddim}
\prettyref{lmm:erm_construction} can be applied to compute the ERM estimator \eqref{eq:est-multidim} for the multivariate case. Recall that the function class $\calvF$ 
 dictates the following form of monotonicity: 
 for each vector $\vx'=(x_1', \cdots, x'_{j-1}, x'_{j+1}, \cdots, x_d)$ of length $d - 1$, 
	we define 
	\begin{equation}\label{eq:cj_class}
		C_{j}(\vx')\triangleq \{\vx\in\bbR_+^d: x_i=x_i', \forall i\neq j\}
	\end{equation}
	Here are several examples for $d = 3$: 
	\[
	C_0((0, 0))=\{(0,0, 0), (1, 0, 0), (2, 0, 0), \cdots\}
	\quad 
	C_1((0, 0)) = \{(0, 0, 0), (0, 1, 0), (0, 2, 0), \cdots\}
	\]\[
	C_2((0, 0)) = \{(0, 0, 0), (0, 0, 1), (0, 0, 2), \cdots\}
	\]
	Then  $\vf\in\calvF$ if and only if for each $j\in[d]$, $f_j$ restricted on each $C_j(\vx')$ is monotone in the $j$-th coordinate of the argument.
	Since the objective function 
	$\hat{\bbE}[\norm{\vf(\vX)}^2 - 2\sum_{j=1}^d X_jf_j(\vX - \ve_j)]$ is separable, 
	for each $j$ we may determine $(\ferm)_j$ by partitioning the samples $\vX_1, \cdots, \vX_n$ into classes of $C_j(\vx')$, and then apply \prettyref{lmm:erm_construction} to each class. 
\end{remark}

To bound the regret of such ERM-estimators, we used the technique of Rademacher complexities. The Rademacher analysis, popularized by \cite{koltchinskii2001rademacher,mendelson2002rademacher,bartlett2002model}, etc., uses a symmetrization argument to bound the error using the supremum of an empirical process of the form $\sup_{g\in \calF}\frac 1n\sum_{i=1}^n{\epsilon_i g(X_i)}$, where $\epsilon_{1},\dots,\epsilon_n$ are \iid Rademacher random variables, and $\calF$ is some suitable function class. The complexity of such a function class is often characterized by the VC dimension or the covering numbers. An immediate bound on the complexity is produced by the uniform convergence bound when $\calF$ is chosen to be the class of all possible candidate functions, however, this has been shown to guarantee only a slow rate of regret ($\frac{1}{\sqrt{n}}$), which is the case in the prior work \cite{barbehenn2022nonparametric} that applies the ERM approach to the Gaussian model. An improvement on this is made by restricting $\calF$ to be a smaller class, for example using the techniques of local Rademacher complexities \cite{bartlett_local_2005,koltchinskii_rademacher_2004,lugosi2004complexity} which analyzes the complexity within a small ball around the target function, the empirical minimizer, etc. We employ a similar technique of using function classes with smaller complexity. Note that the empirical minimizer in \eqref{eq:erm-est} satisfies the following regularity property.
\begin{lemma}\label{lmm:max_support}
	Let $\ferm$ be the ERM-estimator defined in \eqref{eq:erm-est}. 
 Let $X_{\max} = \max\{X_1,\ldots,X_n\}$.
 Then $\max_{0\leq x\leq X_{\max}}\ferm(x)\leq X_{\max}$.
\end{lemma}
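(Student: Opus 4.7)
My plan is to exploit the pool-adjacent-violators (PAVA) characterization of the ERM given by \prettyref{lmm:erm_construction}, and reduce the statement to a short computation on the top PAVA block. Since $\ferm$ is monotone on $\{0,1,\dots,X_{\max}\}$, the maximum is attained at $X_{\max}$, so it suffices to prove $\ferm(X_{\max}) \le X_{\max}$. Applying the lemma with $v_i = N(a_i)$ and $w_i = (a_i+1) N(a_i+1)$ (and $a_k = X_{\max}$, which is the largest index since $N(X_{\max})>0$), the value $\ferm(X_{\max})$ equals the pooled ratio $\sum_{i \in B} w_i / \sum_{i \in B} v_i$ over the top block $B$ containing $a_k$.

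The computation then hinges on three observations. First, the top index $k$ contributes zero to the numerator, since $w_k = (X_{\max}+1) N(X_{\max}+1) = 0$ by definition of $X_{\max}$. Second, for every other index $i \in B$ with $w_i > 0$, the condition $N(a_i+1) > 0$ means $a_i+1$ is an observed value, hence already appears in the sorted list of $a_j$'s, which forces $a_{i+1} = a_i+1$ and consequently $N(a_i+1) = N(a_{i+1})$. Thus $w_i = a_{i+1} N(a_{i+1}) \le X_{\max} N(a_{i+1})$, using $a_{i+1} \le a_k = X_{\max}$. Third, summing these bounds telescopes the numerator into $X_{\max} \sum_{j=k_0+1}^{k} N(a_j)$, where $k_0$ is the leftmost index of $B$, which is at most $X_{\max}$ times the denominator $\sum_{j=k_0}^k N(a_j)$ (since the two sums differ only by the nonnegative term $N(a_{k_0})$). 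This yields $\ferm(X_{\max}) \le X_{\max}$.

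The piecewise-constant extension of $\ferm$ to non-grid points in $\{0,\dots,X_{\max}\}$ preserves the bound automatically, since the extension is constructed from values at $y \in S$ with $y \le x \le X_{\max}$ and is thus dominated by $\ferm(X_{\max})$. The main subtlety I anticipate is the indexing argument in the second observation, that $a_{i+1} = a_i+1$ whenever $w_i > 0$, which requires carefully unpacking the definition of the sorted list $\{a_j\}$ to confirm that every observed value must appear in it; once this is noted, the rest of the proof is a direct chain of inequalities.
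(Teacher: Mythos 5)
Your proposal is correct and follows essentially the same route as the paper's proof: both reduce to bounding $\ferm(X_{\max})$ via monotonicity, invoke the PAVA characterization to express it as the pooled ratio over the top block, use $N(X_{\max}+1)=0$ to drop the top term, shift the numerator index by one, and bound each surviving term by $X_{\max}$ times the corresponding denominator term. The paper writes the argument directly over the consecutive integer range $\{i^*,\dots,X_{\max}\}$ (letting zero-count terms vanish) rather than over the sorted grid $\{a_j\}$, so your version is just a more explicit bookkeeping of the same inequality.
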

\begin{proof}
	Recall that $\ferm$ is characterized by piecewise constancy, where for each maximal interval $I$ on which $\ferm$ is constant (maximal in the sense we cannot extend $I$ further), 
	we have 
	\[
	\forall x_0\in I: 
	\ferm(x_0) = \frac{\sum_{x\in I} (x + 1) N(x + 1)}{\sum_{x\in I} x N(x)}
	\]
	Now that we have defined $\ferm(x) = \ferm(X_{\max})$ for all $x > X_{\max}$, 
	it suffices to show that $\ferm(X_{\max})\le X_{\max}$. 
	Indeed, there exists an $i^*\le X_{\max}$ such that 
	\begin{align}
		\ferm(k) &= \frac{\sum_{i=i^*}^{X_{\max}} (i+1) N(i+1)}{\sum_{i=i^*}^{X_{\max}} N(i)}
        \nonumber\\
		&\stepa{=}\frac{\sum_{i=i^*+1}^{X_{\max}} i N(i)}{\sum_{i=i^*}^{X_{\max}} N(i)}
		\le  \frac{\sum_{i=i^*+1}^{X_{\max}} {X_{\max}} N(i)}{\sum_{i=i^*}^{X_{\max}} N(i)}
		={X_{\max}}(1- \frac{N(i^*)}{\sum_{i=i^*}^k N(i)})
		\le {X_{\max}}
	\end{align}
	where (a) is due to $N({X_{\max}} + 1)=0$. 
    \end{proof}

When $X_1,\dots,X_n$ are generated from the Poisson mixture with either a  compactly supported or subexponential prior, the above result implies that the value of ERM-estimator is at most $\Theta(\polylog(n))$ with high probability. 
This, in essence, dictates the required complexity of the function class. 

\subsection{Risk bounds for ERM via Rademacher complexities}

\prettyref{lmm:max_support} shows that $\ferm$ coincides with the ERM over the following more restrictive class
\begin{align}
	\calF_* \eqdef \sth{f: f\ \text{is monotone}, f(X_{\max})\leq \max\sth{X_{\max}, \fbest(X_{\max})}}.
 \label{eq:calFstar}
\end{align}
Note that $\calF_*$ is a (random) class that depends on the sample maximum. Furthermore, since it depends on the unknown ground truth $f^*$, it is not meant for data-driven optimization but only for theoretical analysis of the ERM \prettyref{eq:erm-est}.
In addition, our work utilizes the quadratic structure of the empirical loss to obtain a stronger notion of the Rademacher complexity measure, which closely resembles and is motivated by the offset Rademacher complexity introduced in \cite{liang2015learning}. 
\begin{theorem}\label{thm:rademacher-bound}
 Let $\mathcal{F}$ be a convex function class that contains the Bayes estimator $f^*$. 
 Let $X_1,\dots,X_n$ be a training sample drawn iid from $p_\pi$,  $\epsilon_1,\dots,\epsilon_n$ an independent sequence of \iid Rademacher random variables, and $\hat{f}$  the corresponding ERM solution. 
 Then for any function class $\calF_{p_n}$ depending on 
the empirical distribution $p_n = \frac{1}{n} \sum_{i=1}^n \delta_{X_i}$ that includes $\hat f$ and $\fbest$ we have 
	\begin{equation}
		\Regret_\pi(\hat f) \le \frac 3nT_1(n) + \frac 4nT_2(n)
	\end{equation}
    where
    \begin{align}
    T_1(n) & = \EE\qth{\sup_{f\in\calF_{p_n}\cup\calF_{p_n'}} 
    \sum_{i = 1}^n (\epsilon_i - \frac 16)(f(X_i) - {\fbest}(X_i))^2},  \label{eq:t1}  \\    
    T_2(n) & = \EE\Bigg[\sup_{f\in\calF_{p_n}\cup\calF_{p_n'}}
    \sum_{i = 1}^n \Bigg\{2\epsilon_i ({\fbest}(X_i)({\fbest}(X_i) - f(X_i))
    - X_i({\fbest}(X_i - 1) - f(X_i - 1)))
   	\nonumber \\
    & \quad \quad  - \frac 14 ({\fbest}(X_i) - f(X_i))^2\Bigg\}\Bigg], \label{eq:t2}
    \end{align}
    and 
 $\calF_{p_n'}$ is defined in the same way as $\calF_{p_n}$ 
  with respect to an independent copy of $X_1,\dots,X_n$.
\end{theorem}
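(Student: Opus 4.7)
The plan is to combine the first-order optimality of the convex ERM, the Poisson integration-by-parts identity $\EE[\theta f(X)] = \EE[X f(X-1)]$, and a symmetrization argument yielding offset Rademacher complexities.

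First, I would exploit the convexity of $\calF$ containing $\fbest$: differentiating $\lambda \mapsto \hat L((1-\lambda)\hat f + \lambda \fbest)$ at $\lambda = 0$ produces the first-order optimality condition
\[
\hat\EE[\hat f(X)(\fbest(X) - \hat f(X)) - X(\fbest(X-1) - \hat f(X-1))] \ge 0.
\]
Writing $g_f \eqdef f - \fbest$ and $\psi_f(X) \eqdef \fbest(X) g_f(X) - X g_f(X-1)$, this rearranges to $\hat\EE[g_{\hat f}^2] + \hat\EE[\psi_{\hat f}] \le 0$. Using $\fbest(X) = \EE[\theta\mid X]$ gives $\Regret_\pi(\hat f) = \EE[g_{\hat f}^2]$, while the Poisson identity gives $\EE[\psi_f] = 0$ for every $f$. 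Hence
\[
\Regret_\pi(\hat f) = \EE[g_{\hat f}^2 + \psi_{\hat f}] = \hat\EE[g_{\hat f}^2 + \psi_{\hat f}] + (\EE - \hat\EE)[g_{\hat f}^2 + \psi_{\hat f}] \le (\EE - \hat\EE)[g_{\hat f}^2] + (\EE - \hat\EE)[\psi_{\hat f}],
\]
which I then upper-bound by two suprema over $\calF_{p_n}$.

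Second, I would perform ghost-sample symmetrization. Introducing an independent copy $X'_1, \ldots, X'_n \iiddistr p_\pi$ with its own class $\calF_{p_n'}$, Jensen's inequality gives $\EE \sup_{\calF_{p_n}}(\EE - \hat\EE)[h_f] \le \EE \sup_{\calF_{p_n}}(\hat\EE' - \hat\EE)[h_f]$ since $\calF_{p_n}$ depends only on $X_{1:n}$. Enlarging to the union $\calG \eqdef \calF_{p_n} \cup \calF_{p_n'}$ (which is symmetric under the simultaneous swap $X_{1:n} \leftrightarrow X'_{1:n}$) and inserting iid Rademacher signs $\epsilon_i$ via the standard swap-pair argument produces Rademacher-type sums summed over $X_{1:n}$ with supremum taken over $\calG$. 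To extract the fast-rate offsets $-\tfrac{1}{6}(f-\fbest)^2$ in $T_1$ and $-\tfrac{1}{4}(f-\fbest)^2$ in $T_2$, I would apply the pointwise AM-GM inequality $|2\epsilon_i a b| \le 4a^2 + \tfrac{b^2}{4}$ to the two cross terms $\fbest(X_i) g_f(X_i)$ and $X_i g_f(X_i-1)$ appearing in $\psi_f$ (with $b = g_f(\cdot)$), and a corresponding decomposition $\epsilon_i = (\epsilon_i - \tfrac{1}{6}) + \tfrac{1}{6}$ for the quadratic piece $g_f^2$. The residual empirical averages of $g_f^2$ left behind by these offsets would then be absorbed by the first-order inequality $\hat\EE[g_{\hat f}^2] + \hat\EE[\psi_{\hat f}] \le 0$ from the previous step, and the specific coefficients $3$ and $4$ in the final bound arise from the numerical balancing.

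The main obstacle is the offset-introduction step. The class $\calG$ is symmetric only under the simultaneous full swap of $X_{1:n}$ and $X'_{1:n}$, not under individual pair swaps, which is precisely why the theorem is stated over $\calF_{p_n} \cup \calF_{p_n'}$ rather than $\calF_{p_n}$; justifying the Rademacher sign-insertion on this random union requires care. Equally delicate is tracking the constants $\tfrac{1}{6}$, $\tfrac{1}{4}$, $3$, $4$ through the AM-GM-plus-absorption step so that the first-order inequality just barely suffices to soak up the quadratic residue; these constants, rather than being generic, reflect the specific structure of the Poisson loss (with its separate quadratic and ``linear-in-$g$'' pieces) and are the technical heart of the argument.
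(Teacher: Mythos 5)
Your opening moves are correct and coincide with the paper's: the first-order optimality condition along the segment from $\hat f$ to $\fbest$ is exactly the paper's ``geometric inequality'' \prettyref{eq:geom-ineq} specialized to $h=\fbest$ (your $\hat\EE[g_{\hat f}^2]+\hat\EE[\psi_{\hat f}]\le 0$ with $g_{\hat f}=\hat f-\fbest$), the Poisson identity and $\Regret_\pi(\hat f)=\EE[g_{\hat f}^2]$ are fine, and the ghost-sample symmetrization over the union class $\calF_{p_n}\cup\calF_{p_n'}$, including the data-dependence subtlety you flag, is precisely the content of \prettyref{lmm:symmetrization}. The gap is in how you propose to produce the offsets $-\tfrac16(f-\fbest)^2$ and $-\tfrac14(f-\fbest)^2$ inside the suprema, which is the heart of the theorem. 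In your displayed chain you spend the curvature inequality immediately, dropping $\hat\EE[g_{\hat f}^2+\psi_{\hat f}]$ altogether, so the quantity you go on to symmetrize, $(\EE-\hat\EE)[g^2]+(\EE-\hat\EE)[\psi]$, carries no negative quadratic at all. You then try to recreate the offsets after symmetrization via AM--GM and the split $\epsilon_i=(\epsilon_i-\tfrac16)+\tfrac16$, absorbing the residue ``by the first-order inequality.'' This cannot work: (i) AM--GM $|2\epsilon_i ab|\le 4a^2+b^2/4$ introduces a \emph{positive} multiple of $g_f^2$, not a negative offset, and it also destroys the Rademacher cross term that must appear verbatim in $T_2(n)$; (ii) the $\epsilon_i$-split leaves $+\tfrac16\sum_i g_f(X_i)^2$ \emph{inside} a supremum over arbitrary $f$ in the union class (including functions selected by the ghost sample), and the first-order inequality---which concerns only $\hat f$ on the original sample and which you have already consumed---supplies no negative quadratic that could absorb it.

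The correct ordering is the paper's: retain the quadratic gain from convexity, i.e.\ bound $\Regret_\pi(\hat f)\le \EE\qth{\sfR(\hat f)-\sfR(\fbest)+\hat\sfR(\fbest)-\hat\sfR(\hat f)-\hat\EE[(\fbest-\hat f)^2]}$, then add and subtract $\EE[(\fbest-\hat f)^2]$ and split the retained negative quadratic between the two resulting terms \prettyref{eq:m0_1} and \prettyref{eq:m0_2}, so that each has the form $\EE[Tf]-\hat\EE[Tf]-(\EE[Uf]+\hat\EE[Uf])$ with $U$ a quadratic penalty carrying a plus sign on \emph{both} the original and the ghost sample. \prettyref{lmm:symmetrization} is stated for exactly this form: $T$ is symmetrized into $\epsilon_i Tf(X_i)$ while $U$ passes through unsymmetrized and becomes the in-sup offset; taking $Tf(x)=-\qth{2\fbest(x)(\fbest(x)-f(x))-2x(\fbest(x-1)-f(x-1))}$ with $Uf=\tfrac14(\fbest-f)^2$ yields $T_2(n)$, and taking $Tf=\tfrac32(\fbest-f)^2$ with a quadratic $U$ yields $T_1(n)$ with its $\tfrac16$ offset, the constants $3$ and $4$ coming from this bookkeeping. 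In short, the offsets must be created \emph{before} symmetrization, out of the curvature of the empirical objective at $\hat f$; they cannot be manufactured afterwards from the noise terms.
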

\begin{proof}
Define 
\begin{align}\label{eq:def-R}
	\sfR(f)= \EE\qth{f(X)^2-2Xf(X-1)},
	\quad \hat \sfR(f)= \hat \EE\qth{f(X)^2-2Xf(X-1)}.
\end{align}
We first note that $\hat{f}$ satisfies the following inequality, thanks to the convexity of $\mathcal{F}$:
\begin{align}\label{eq:geom-ineq}
		\hat{\sfR}(h) - \hat{\sfR}(\hat{f})\ge \hat{\EE}[(h - \hat{f})^2], \quad \forall h\in\calF.
\end{align}
To show this claim, since $\calF$ is convex,for any $\epsilon\in[0, 1]$, $(1-\epsilon)\hat{f}+\epsilon h$ is inside the class $\calF$, 
so with $\hat{\sfR}(\hat{f})\le \hat{\sfR}((1-\epsilon)\hat{f}+\epsilon h)$ we have 
\begin{align*}
	\frac{\partial}{\partial \epsilon}\hat{\sfR}((1-\epsilon)\hat{f}+\epsilon h)
	&=2\hat{\EE}[(h(X) - \hat{f}(X))((1-\epsilon)\hat{f}(X) + \epsilon h(X))
	- X (h(X - 1) - \hat{f}(X - 1))]
\end{align*}
By the ERM minimality of $\hat{f}$, such derivative must be nonnegative when evaluated at 0. 
That is, 
\begin{equation}
	\hat{\EE}[(h(X) - \hat{f}(X))\hat{f}(X)- X (h(X - 1) - \hat{f}(X - 1))]\ge 0
\end{equation}
Therefore, 
evaluating the difference gives us 
\begin{align}
	&\hat{\sfR}(h) - \hat{\sfR}(\hat{f}) - \hat{\EE}[(h(X) - \hat{f}(X))^2]
	\nonumber\\
	&=\hat{\EE}[(h(X)^2 - \hat{f}(X)^2) - 2X(h(X - 1) - f(X - 1))]-\hat{\EE}[(h(X) - \hat{f}(X))^2]
	\nonumber\\
	&=2\hat{\EE}[h(X)\hat{f}(X) - \hat{f}(X)^2 - X(h(X - 1) - \hat{f}(X - 1))]
	\ge 0
\end{align}
as desired. Then using $\Regret_\pi(\hat{f})= \sfR(\hat{f}) - \sfR({\fbest})$ we get
\begin{align}
	& ~\Regret_\pi(\hat{f}) 
        \nonumber\\
	&\le \EE\qth{\sfR(\hat{f}) - \sfR({\fbest}) + \hat{\sfR}({\fbest}) - \hat{\sfR}(\hat{f}) - \hat{\EE}({\fbest} - \hat{f})^2}
	    \nonumber\\
	&= \EE\Big[(\sfR(\hat{f}) - \sfR({\fbest})-\EE[(\fbest-\hat f)^2]) + (\hat{\sfR}({\fbest}) - \hat{\sfR}(\hat{f}) + \hat \EE[(\fbest-\hat f)^2])
	\nonumber\\
	&\quad + \EE[(\fbest-\hat f)^2] - 2\hat{\EE}[({\fbest} - \hat{f})^2]\Big]
	\nonumber\\
        &=\EE\left[\hat\EE[2{\fbest}(X)({\fbest}(X) - \hat{f}(X)) - 2X({\fbest}(X - 1) - \hat{f}(X - 1))]\right. \nonumber\\
	&\quad \left.-\EE[2{\fbest}(X)({\fbest}(X) - \hat{f}(X)) - 2X({\fbest}(X - 1) - \hat{f}(X - 1))] - \frac 14 (\hat\EE[({\fbest} - \hat{f})^2] + {\EE}[({\fbest} - \hat{f})^2])\right]\label{eq:m0_1}
       \\
	&\quad +\EE\left[\frac 54\EE[({\fbest}(X) - \hat{f}(X))^2]
	- \frac 74\hat{\EE}[({\fbest(X)} - \hat{f}(X))^2]\right].\label{eq:m0_2}
\end{align}

We separately bound the two terms \prettyref{eq:m0_1} and \prettyref{eq:m0_2} in the above display in terms of the Rademacher complexities using the following symmetrization result.

\begin{lemma}\label{lmm:symmetrization}
	Let $\epsilon_1, \cdots, \epsilon_n$ as independent  Rademacher symbols. Let $T,U$ be two operators mapping  $f(x)$ to $Tf(x)$ and $Uf(x)$. Then 
 \begin{align*}
		&~\EE\qth{\sup_{f\in\calF_{p_n}}[\EE[Tf(X)] - \hat{\EE}[Tf(X)] - (\EE[Uf(X)] + \hat{\EE}[Uf(X)])]}
		\le \frac{2}{n}\EE\Bigg[
		\sup_{f\in\calF_{p_n}\cup \calF_{p_n'}}
		\sum_{i=1}^n \epsilon_i Tf(X_i) - Uf(X_i)
		\Bigg]
	\end{align*}
	where $p'_{n}$ is an independent copy of the empirical distribution $p_n$. 
\end{lemma}
\begin{proof}
	Here, we note that the symmetrization technique has been introduced in \cite[p.11-12]{liang2015learning}. 
	However, given that we are taking a supremum over a data-dependent subclass of $\mathcal{F}$,  some extra care needs to be taken. 
	
		\begin{align}
			&~\EE[\sup_{f\in\calF_{p_n}\cup \calF_{p_n'}}
			\hat{\EE}'[T(f(X))] - \hat{\EE}[T(f(X))] - (\hat{\EE}'[U(f(X))] + \hat{\EE}[U(f(X))])
			]
			\nonumber\\&
			\stackrel{\text{(a)}}{=} \frac 12\EE[\sup_{f\in\calF_{p_n}\cup \calF_{p_n'}}
			\hat{\EE}'[T(f(X))] - \hat{\EE}[T(f(X))] - (\hat{\EE}'[U(f(X))] + \hat{\EE}[U(f(X))])
			]
			\nonumber\\& 
			+ \frac 12\EE[\sup_{f\in\calF_{p_n}\cup \calF_{p_n'}}
			\hat{\EE}[T(f(X))] - \hat{\EE}'[T(f(X))] - (\hat{\EE}'[U(f(X))] + \hat{\EE}[U(f(X))])
			]
			\nonumber\\&=\frac{1}{2n}
			\EE[\sup_{f, g\in\calF_{p_n}\cup \calF_{p_n'}}
			\sum_{i=1}^n T(f)(X_i') - T(f)(X_i) - U(f)(X_i) - U(f)(X_i')
			\nonumber\\&
			+ T(g)(X_i) - T(g)(X_i') - U(g)(X_i) - U(g)(X_i')]
			\nonumber\\&
			\le \frac{1}{2n}
			\EE[\sup_{f_1, g_1\in\calF_{p_n}\cup \calF_{p_n'}}
			\sum_{i=1}^n T(g_1)(X_i) - T(f_1)(X_i) - U(f_1)(X_i) - U(g_1)(X_i)]
			\nonumber\\&
			+\frac{1}{2n}
			\EE[\sup_{f_2, g_2\in\calF_{p_n}\cup \calF_{p_n'}}
			\sum_{i=1}^n T(f_2)(X_i') - T(g_2)(X_i') - U(f_2)(X_i') - U(g_2)(X_i')]
			\nonumber\\&
			\stackrel{\text{(b)}}{=} \frac{1}{n}
			\EE[\sup_{f, g\in\calF_{p_n}\cup \calF_{p_n'}}
			\sum_{i=1}^n T(g)(X_i) - T(f)(X_i) - U(f)(X_i) - U(g)(X_i)]
			\nonumber\\&
			\stackrel{\text{(c)}}{\le } \frac{2}{n}
			\EE[\sup_{f\in\calF_{p_n}\cup \calF_{p_n'}}
			\sum_{i=1}^n \epsilon_i T(f)(X_i) - U(f)(X_i)]
		\end{align}
		where (a), (b) are symmetry and (c) is Jensen's inequality. 
\end{proof}
As $f\in \calF_{p_n}$, applying the last lemma to previous display above, with the choice for the first expectation \eqref{eq:m0_1} 
\begin{align*}
	Tf(x)=-\qth{2{\fbest(x)}(\fbest(x)-f(x))-2x(\fbest(x-1)-f(x-1))}, \quad
	Uf(x)=\frac 14(\fbest(x)-f(x))^2\,,
\end{align*}
and the choice for the second expectation \eqref{eq:m0_2}
$Tf(x)=\frac 32(\fbest(x)-f(x))^2, 
	Uf(x)=\frac 12(\fbest(x)-f(x))^2$, 
we get the desired result.
\end{proof}

\subsection{Controlling the Rademacher complexities}

To prove \prettyref{thm:main}, we apply \prettyref{thm:rademacher-bound} with the function class $\calF_{p_n}=\calF_*$ defined in \prettyref{eq:calFstar}. Denote by $\calF_{p_n'}=\calF_*'$ its independent copy based on a fresh sample $X_1',\dots,X_n'$. Let us define the following generalization of \eqref{eq:t1} and \eqref{eq:t2}: For $b>1$,
\begin{align}
	T_1(b,n) & = \EE\qth{\sup_{f\in\calF_*\cup\calF_*'} 
		\sum_{i = 1}^n (\epsilon_i - \frac 1b)(f(X_i) - {\fbest}(X_i))^2},  \label{eq:t1-new}  \\    
	T_2(b,n) & = \EE\Bigg[\sup_{f\in\calF_*\cup\calF_*'}
	\sum_{i = 1}^n 2\epsilon_i ({\fbest}(X_i)({\fbest}(X_i) - f(X_i)) \nonumber \\
	& \quad \quad - X_i({\fbest}(X_i - 1) - f(X_i - 1))) - \frac 1b ({\fbest}(X_i) - f(X_i))^2\Bigg]. \label{eq:t2-new}
\end{align}
Then we have the following bound on the complexities.

\begin{lemma}\label{lmm:T1T2-bounds}
	Let $\pi\in \calP[0,h]$ with $h$ being either a constant or $h=s\log n$ for some $s>0$. Let $M:=M(n,h)>h$ be such that \begin{itemize}
		\item $\sup_{\pi\in \calP([0,h])}\PP_{X\sim p_\pi}\qth{X>M}\leq \frac 1{n^7}$.
		\item For $X_i\simiid p_\pi,$ $\EE\qth{X_{\max}^k}\leq c(k) M^k$ for $k=1,\dots,4$ and absolute constant $c>0$.
	\end{itemize}
	Then there exists a constant $c_0(b)>0$ such that
	\begin{equation}
		T_1(b,n),T_2(b, n)\leq c_0(b)\pth{\max\{1,h^2\}M+ \max\{1,h\}M^2}\label{eq:lmm_t2bound}.
	\end{equation}
\end{lemma}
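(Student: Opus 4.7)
The plan is to reduce to a high-probability event on which the function classes become monotone functions on a bounded grid of size $M$, then apply offset Rademacher bounds that exploit the finite pseudo-dimension of such a class.

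\emph{Truncation.} Let $E = \{X_{\max} \le M\} \cap \{X'_{\max} \le M\}$, where $X'_1,\ldots,X'_n$ is the independent copy used to define $\calF_{p_n'}$. By the tail hypothesis and a union bound, $\PP[E^c] \le 2n^{-6}$. On $E^c$, each $f \in \calF_* \cup \calF_*'$ satisfies $\|f\|_\infty \le \max(X_{\max}, X'_{\max}, h)$ by construction, so every integrand appearing inside $T_1$ or $T_2$ is at most a constant times $(X_{\max} \vee X'_{\max})^2$. By Cauchy--Schwarz with the fourth-moment hypothesis $\EE[X_{\max}^4] \le c(4) M^4$, the $E^c$-contribution to either quantity is at most $O(n M^2 \sqrt{\PP[E^c]}) = O(M^2 / n^{5/2})$, which is dominated by the desired bound.

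\emph{Reduction and entropy.} On $E$, both $\calF_*$ and $\calF_*'$ lie inside the class $\calM$ of nondecreasing functions $\{0,\ldots,M\} \to [0,M]$. Setting $g = f - f^*$, one obtains a class with $\|g\|_\infty \le 2M$. Since a monotone function on $\{0,\ldots,M\}$ is determined by at most $M+1$ values, $\calM$ has pseudo-dimension $O(M)$ and $L^\infty$-entropy $\log N_\infty(\epsilon, \calM) \simleq M \log(M/\epsilon)$. A key additional fact is that for $\pi \in \calP[0,h]$, the second moment $\EE[X^2] = O(\max\{1, h^2\})$, which will serve as the ``effective envelope'' for the $X_i$-coefficient in $T_2$ rather than the worst-case $M$.

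\emph{Bounding the offset Rademacher complexities.} For $T_1$, I would run a peeling argument in the empirical $L^2$-radius of $g$: on each layer $\|g\|_{L^2(p_n)}^2 \in [2^{-k} M^2, 2^{-k+1} M^2]$, apply the Ledoux--Talagrand contraction ($x \mapsto x^2$ is $4M$-Lipschitz on $[-2M, 2M]$) to reduce the chaos $\sum \epsilon_i g(X_i)^2$ to a linear Rademacher process, control the latter by Dudley's integral with the entropy above, and balance against the quadratic offset $-n\rho^2/b$. Summing the geometric layers yields $T_1 \simleq_b M^2$ (up to polylogarithmic factors absorbed into the slowly growing $M$). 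For $T_2$, decompose the linear-in-$\epsilon$ part into the $f^*(X_i) g(X_i)$ piece (coefficient bounded by $h$ since $f^* \le h$) and the $X_i g(X_i - 1)$ piece (coefficient of effective size $\sqrt{\EE X^2} = O(\max\{1,h\})$). For each piece, apply the pointwise Fenchel inequality $\epsilon L g - \alpha g^2/2 \le L^2/(2\alpha)$ together with the parametric entropy of $\calM$; this produces contributions of order $\max\{1,h^2\} M$ and $\max\{1,h\} M^2$ respectively, matching the target.

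\emph{Main obstacle.} The central difficulty is to obtain a bound that is essentially free of $n$ (up to polylogs): a naive analysis based on the $L^2$-bracketing entropy $\log N_{[]}(\epsilon) \asymp 1/\epsilon$ (valid for monotone functions on a continuum) would yield only the slow $n^{-2/3}$ rate. The finite pseudo-dimension $O(M)$ of $\calM$ on the discrete grid must be exploited, and the effective envelope $\sqrt{\EE X^2}$ rather than the worst-case $M$ must be used for the $X_i$-coefficient in $T_2$ so that the naive $M^3$ is replaced by the desired $hM^2$. Balancing these ingredients against the offset parameter $1/b$ and the localization scale is the most delicate part of the argument, and the case $h < 1$ (where $\EE X^2 \sim h$ rather than $h^2$) accounts for the $\max\{1,h^2\}$ vs $\max\{1,h\}$ distinction in the stated bound.
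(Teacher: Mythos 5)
Your proposal shares the truncation step with the paper (the Cauchy--Schwarz argument on $E^c$ is essentially the paper's bound \eqref{eq:m3}, modulo a small arithmetic slip in the exponent that does not matter), but the core arguments for $T_1$ and $T_2$ as sketched would not yield the stated bounds, and the route is genuinely different from the paper's.

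For $T_1$, the generic chaining route — contraction by the Lipschitz constant $4M$ of $x\mapsto x^2$, Dudley's integral with the parametric entropy $\log N_\infty(\epsilon)\asymp M\log(M/\epsilon)$, and balancing against the quadratic offset $-n\rho^2/b$ — gives an offset complexity of order $M^3$ (up to polylogs), not $M^2$. The extra factor of $M$ cannot be dropped by a general entropy argument; the paper obtains it by exploiting the \emph{specific} structure: for monotone $f$ with $f^*\le h$, there is a single crossing point $v(f)=\min\{x:f(x)>m_bh\}$; below it the summand is at most $m_b^2h^2$ times $\max\{\epsilon(x)-N(x)/b,0\}$ (handled by \prettyref{lmm:binom_tail_rad}), and above it $(f-f^*)^2$ is comparable to $f^2$, so the supremum over monotone $f^2\in[m_b^2h^2,X_{\max}^2]$ is a linear program whose extremal solutions are two-level step functions. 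The resulting object $\sup_v\sum_{x>v}(\epsilon(x)-\tfrac{\beta}{\alpha}N(x))$ is then controlled by the \emph{dimension-free} penalized-partial-sum bound (\prettyref{lmm:binom_penal}), contributing only $O(1)$ once the $X_{\max}^2$ envelope is factored out. This collapse to step functions, not metric entropy, is what produces $M^2$ instead of $M^3$.

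For $T_2$, applying the Fenchel inequality ``per data point $i$'' with $\alpha$ fixed by the offset $1/b$ would give $\sum_{i}f^*(X_i)^2\cdot O(b)=O(nbh^2)$, not $O(h^2M)$. The paper first regroups the sum over $i$ into a sum over support values $x$ with weights $N(x)$ and only then applies $2a\cdot(\cdot)-\tfrac{N(x)}{b}(\cdot)^2\le \tfrac{b\,a^2}{N(x)}$; the $N(x)$ in the denominator is crucial, and bounding $\EE[a^2/N(x)]$ requires both $\EE[\epsilon(x)^2\mid X_1^n]=N(x)$ (killing one $N(x)$) \emph{and} the binomial moment inequality $\EE[\indc{N(x)>0}/N(x)]\lesssim\min\{np_\pi(x),1/(np_\pi(x))\}$ from \cite[Lemma 16]{polyanskiy2021sharp}. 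Neither the regrouping nor this binomial moment fact appears in your sketch, and once Fenchel is applied pointwise the supremum over the class is already gone, so ``parametric entropy of $\calM$'' plays no role in that step. Your proposal also has no mechanism for the $t_0(n)$ contribution — the terms with $N(x)=0$ but $N(x+1)>0$, which enter $T_2$ via the shift $X_i\mapsto X_i-1$ and must be handled by the conditioned-binomial argument around \eqref{eq:m4}. The correct observation you do make (that $\sqrt{\EE X^2}=O(\max\{1,h\})$) corresponds to the paper's use of $xp_\pi(x)=f^*(x-1)p_\pi(x-1)\le h$, but this needs to be combined with the binomial moment bound to actually produce $hM^2$.
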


The first condition on the probability is an artifact of the proof. In general, any tail bounds on the random variable $X$ that decay polynomially in $n$, such as the ones satisfied by bounded priors or priors with subexponential tails, are good enough for our proofs to go through.

\begin{proof}[Proof of \prettyref{lmm:T1T2-bounds}]
We consider the following notations. 
\begin{equation}
	N(x) = \sum_{i=1}^n \indc{X_i = x}
	\qquad 
	\epsilon(x) = \sum_{i=1}^n \epsilon_i\indc{X_i = x}
\end{equation}
where $\epsilon_1, \cdots, \epsilon_n$ are independent Rademacher symbols. 

\paragraph{Bound on $T_2(b,n)$:} Using $f(-1)=0$ we note that
\begin{align}
	&\sum_{i = 1}^n 2\epsilon_i ({\fbest}(X_i)({\fbest}(X_i) - f(X_i)) - X_i({\fbest}(X_i - 1) - f(X_i - 1)))
	- \frac 1b ({\fbest}(X_i) - f(X_i))^2
	\nonumber \\
	=& \sum_{x\ge 0} 2\epsilon(x) ({\fbest}(x)({\fbest}(x) - f(x)) - x({\fbest}(x - 1) - f(x - 1)))
	- \frac {N(x)}{b} ({\fbest}(x) - f(x))^2
	\nonumber \\
	=& \sum_{x\ge 0} 2(\epsilon(x) {\fbest}(x) - (x+1) \epsilon(x + 1))({\fbest}(x) - f(x))
	- \frac {N(x)}{b} ({\fbest}(x) - f(x))^2
\end{align}
In view of the above, we can bound $T_2(b,n)$ using the sum of the following two terms
\[t_1(n) \triangleq
\EE\{\sup_{f\in{\calF_*\cup\calF_*'}}
[\sum_{x\ge 0} 2(\epsilon(x) {\fbest}(x) - (x+1) \epsilon(x + 1))({\fbest}(x) - f(x))
- \frac {N(x)}{b} ({\fbest}(x) - f(x))^2] \indc{N(x) > 0}\}
\]
\[t_0(n) \triangleq 
\EE\{\sup_{f\in{\calF_*\cup\calF_*'}}
[\sum_{x\ge 0} -2(x+1) \epsilon(x + 1)({\fbest}(x) - f(x))] \indc{N(x)=0}\}.
\]
For analyzing the term $t_1(n)$, since $N(x) > 0$, 
using $2ax - bx^2 \le \frac{a^2}{b}$ for any $a, x$ and $b > 0$ we get
\begin{equation}\label{eq:i1n_bound}
	t_1(n)\le 
	b\cdot \EE\qth{\sum_{x\ge 0} \frac{(\epsilon(x){\fbest}(x) - (x+1)\epsilon(x+1))^2}{N(x)} \indc{N(x) > 0}}
\end{equation}
Using $\EE\sth{\epsilon(x)|X_1,\dots,X_n}=0$ and $\EE\qth{(\epsilon(x))^2|X_1,\dots,X_n}=N(x)$ we get
\[
\EE\qth{\frac{({\fbest}(x)\epsilon(x) - (x+1)\epsilon(x+1))^2}{N(x)}\indc{N(x)>0}}
=\EE\qth{\pth{(\fbest(x))^2 + \frac{(x+1)^2 N(x+1)}{N(x)}}\indc{N(x)>0}}.
\]
Using the results that
\begin{enumerate}[label=(P\arabic*)]
	\item \label{prop:P1} $N(x)\sim \Binom(n,p_\pi(x))$ and  for absolute constant $c'>0$ \cite[Lemma 16]{polyanskiy2021sharp} $$\EE\qth{\indc{N(x)>0}\over N(x)}\leq c'\min\sth{np_\pi(x),\frac 1{np_\pi(x)}},$$
	\item conditioned on $N(x), N(x+1)\sim\Binom(n-N(x), \frac{p_{\pi}(x+1)}{1 - p_{\pi}(x)})$,
	\item $\fbest(x)=(x+1){p_\pi(x+1)\over p_\pi(x)}=\EE\qth{\theta|X=x}\leq h$ for all $x\geq 0$,
	\item Since for every $x > 0$, ${x^ye^{-x}\over y!}\leq {y^ye^{-y}\over y!}\leq \frac 1{\sqrt{2\pi y}}$ (Stirling's), we have \begin{align}\label{eq:poi(x)>=1}
		p_\pi(y)<\frac 1{\sqrt{2\pi y}},\quad y\geq 1,
	\end{align}
\end{enumerate}
we continue  \eqref{eq:i1n_bound} to get
\begin{align*}
	\frac 1b t_1(n)
	&\leq  \EE\qth{\sum_{x\geq 0}\fbest(x)^2\indc{N(x)>0}}+\sum_{x\geq 0}(x+1)^2{np_\pi(x+1)\over 1-p_{\pi}(x)}\EE\qth{\indc{N(x)>0}\over N(x)}
	\nonumber\\
	&\leq  h^2\EE\qth{1 + X_{\max}}+{np_{\pi}(1)\over 1-p_{\pi}(0)}\EE\qth{\indc{N(0)>0}\over N(0)}
	+n\sum_{x\geq 1}(x+1)^2{p_\pi(x+1)}\EE\qth{\indc{N(x)>0}\over N(x)}
	\nonumber\\
	&\leq  h^2\EE\qth{1 + X_{\max}}+{c'p_{\pi}(1)\over (1-p_{\pi}(0))p_\pi(0)}
	+c'h\sum_{x\geq 1}(x+1)\min\sth{(np_\pi(x))^2,1}.
\end{align*}
Let $M>h$ be as in the lemma statement. For the second term notice that ${p_{\pi}(1)\over (1-p_{\pi}(0))p_\pi(0)}\leq \max\sth{1,h}$. For the third term, we use the bound
\begin{align}
	&h\sum_{x\geq 1}(x+1)\min\sth{(np_\pi(x))^2,1}
	\leq hM^2+ h\sum_{x\geq M}(x+1)\min\sth{(np_\pi(x))^2,1} 
	\nonumber\\
	&\leq hM^2+ 2n^2h\sum_{x\geq M}x(p_\pi(x))^2
	\stepa{\leq} hM^2+ 2n^2h^2 \PP_{X\sim p_\pi}[X>M]\leq 2(hM^2+{2h^2\over n^5}).
\end{align}
where (a) is due to that $xp_{\pi}(x) = f^*(x - 1)p_{\pi}(x - 1)\le h$ for all $x\ge 1$. 
We finally note that since $h$ is either constant or in the form $O(s\log n)$ for some constant $s$, the term $\frac{h^2}{n^5}$ can be neglected. 

Next, we evaluate $t_0(n)$. As $|\epsilon(x+1)|\leq N(x+1)$ and $N(x+1)=0$ for $x\geq X_{\max}$ we get
\begin{align}\label{eq:m4}
	t_0(n) &
	\le\EE\qth{\sum_{x\geq 0} 
		2(x+1)N(x +1)\sup_{f\in {\calF_*\cup\calF_*'}} \abs{\fbest(x)-f(x)}\indc{N(x)=0}}
	\nonumber\\
	&\le\EE\qth{\sum_{x= 0}^{X_{\max}-1} 
		2(x+1)\pth{\fbest(x)+X_{\max}+X_{\max}'}N(x +1)\indc{N(x)=0}}.
\end{align}
Let $M>0$ be as in the lemma statement and $A=\sth{X_{\max}\leq M,X_{\max}'\leq M}$. Then $\PP\qth{A^c}\leq \frac 2{n^6}$ via the union bound argument. 
Thus we have, for some absolute constant $c > 0$, 
\begin{align}\label{eq:m3}
	&~\EE\qth{\sum_{x= 0}^{X_{\max}-1} 
		2(x+1)\pth{\fbest(x)+X_{\max}+X_{\max}'}N(x +1)\indc{N(x) = 0}\cdot \indc{A^c}}
	\nonumber\\
	\leq &~\bbE\qth{
		X_{\max}(h + X_{\max} + X_{\max}')\sum_{x=0}^{X_{\max} - 1}N(x + 1)\indc{N(x)=0}\indc{A^c}
	}
    \nonumber\\
	\stepa{\leq} & ~n\EE\qth{\pth{X_{\max}}\cdot \pth{h+X_{\max}+X_{\max}'}\indc{A^c}}
	\stepb{\leq} n\sqrt{\EE\qth{\pth{h+X_{\max}+X_{\max}'}^4}}\sqrt{\PP\qth{A^c}}
	\leq {cM^2\over n^2}. 
\end{align}
with (a) due to that $\sum_{x=0}^{X_{\max} - 1}N(x + 1)\le \sum_{x=0}^{\infty}N(x)=n$, 
and (b) the Cauchy-Schwarz inequality and $\EE\qth{X_{\max}^4}\lesssim 2M^4$. 

For each $x\le M$, define $q_{\pi, M}(x) \triangleq \frac{p_{\pi}(x)}{\bbP_{X\sim p_{\pi}}[X\le M]}$. Note that $\PP\qth{N(x)=0|A}=(1-q_{\pi, M}(x))^n$ and conditioned on the set $A$ and $\{N(x)=0\}$, the random variable $N(x+1)$ has $\Binom\pth{n,{q_{\pi, M}(x+1)\over 1-q_{\pi, M}(x)}}$ distribution. This implies  
\begin{align*}
	&~\EE\qth{\left.\sum_{x= 0}^{X_{\max}-1} 
		2(x+1)\pth{\fbest(x)+X_{\max}+X_{\max}'}N(x +1)\indc{N(x)=0}\right|A}
	\nonumber\\
	&\leq \sum_{x=0}^{M-1} 2(x+1)(h+2M)\EE\qth{N(x+1)|N(x)=0,A}\PP\qth{N(x)=0|A}
	\nonumber\\
	&\leq \sum_{x=0}^{M-1} 2(x+1)(h+2M){nq_{\pi, M}(x+1)\over 1-q_{\pi,M}(x)}\pth{1-q_{\pi, M}(x)}^n
	\nonumber\\
	&= \sum_{x=0}^{M-1} 2(h+2M)\fbest(x)nq_{\pi, M}(x)\pth{1-q_{\pi, M}(x)}^{n-1}
	\stepa{\leq} 2Mh(h+2M).
\end{align*}
where (a) uses $\fbest(x)\le h$ for all $x$, and also 
$nw\pth{1-w}^{n-1}\le (1-\frac{1}{n})^{n-1} < 1$ for all $w\in [0, 1]$. 
We conclude our proof by combining the above with \eqref{eq:m3}.

\paragraph{Bound on $T_1(b,n)$:} Denote $m_b=b+1$. Conditional on the sample $X_1,\dots,X_n$ and $X_1,\dots,X_n$, given any $f\in \calF_*\cup\calF_*'$ define 
$$v(f)=\min\sth{\min\sth{x:f(x)\leq m_bh},X_{\max}}.$$
Then using the above definition we get for each $f\in \calF_*\cup\calF_*'$, conditional on the samples,
\begin{align}
	&\sum_{i=1}^n (\epsilon_i - \frac 1b)(f(X_i) - {\fbest}(X_i))^2
	=
	\sum_{x:N(x) > 0}(\epsilon(x) - \frac 1b N(x))(f(x) - {\fbest}(x))^2
	\nonumber\\
	&=\pth{\sum_{x=0}^{v(f)} + \sum_{x =v(f)+1}^{X_{\max}}}(\epsilon(x) - \frac 1b N(x))(f(x) - {\fbest}(x))^2
	\nonumber\\
	&\le m_b^2h^2\sum_{x=0}^{X_{\max}} \max\sth{\epsilon(x) - \frac 1b N(x),0}\label{eq:m11}
	\\&\quad + \sup_{v\geq 0}\sth{\sup_{m_bh \leq f \le X_{\max}}\sth{\sum_{x > v}^{X_{\max}} (\epsilon(x) - \frac 1b N(x))(f(x) - {\fbest}(x))^2}}.\label{eq:m12}
\end{align}
For the first term \prettyref{eq:m11}, 
we invoke the following lemma, to be proven in \prettyref{app:binom_tail_proof}. 
\begin{lemma}\label{lmm:binom_tail_rad}
	For each $x$ and $b>1$, conditioned on $X_1^n$ we have 
	\[\bbE[\max\{\epsilon(x) - \frac 1bN(x), 0\}]
	\le \frac{1 - \frac 1b}{e\cdot D(\frac{1+\frac 1b}{2} || \frac 12)}
	\]
\end{lemma}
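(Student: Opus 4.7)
Condition on $X_1^n$, so that $N := N(x)$ is a fixed non-negative integer and $\epsilon(x) = \sum_{i\,:\,X_i = x} \epsilon_i$ is a sum of $N$ independent Rademacher variables. The strategy is to produce a conditional bound that depends on $N$ via a Chernoff-type tail estimate, and then eliminate the dependence on $N$ by a one-variable optimization. The reason a uniform (in $N$) final bound exists is that the deterministic envelope for $\max\{\epsilon(x) - N/b,\,0\}$ grows only linearly in $N$, whereas the probability of exceeding the threshold decays exponentially in $N$.

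First, from the deterministic bound $|\epsilon(x)| \le N$ one gets
\[
\max\{\epsilon(x) - N/b,\,0\} \;\le\; N(1 - 1/b)\,\Indc{\epsilon(x) > N/b},
\]
so it remains to estimate $\bbP[\epsilon(x) > N/b \mid X_1^n]$. Set $p := (1 + 1/b)/2 \in (1/2,\,1]$ and note that $W := (\epsilon(x) + N)/2 \sim \Binom(N, 1/2)$, so the event $\{\epsilon(x) > N/b\}$ coincides with $\{W > Np\}$. The standard binomial Chernoff inequality for $p \ge 1/2$ then yields
\[
\bbP[\epsilon(x) > N/b \mid X_1^n] \;\le\; \exp\!\bigl(-N\, D(p\,\|\,1/2)\bigr).
\]

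Combining the two displays gives the conditional bound $(1 - 1/b)\, N e^{-ND}$ with $D := D((1+1/b)/2 \,\|\, 1/2) > 0$. Elementary calculus shows that the map $N \mapsto N e^{-ND}$ on $[0,\infty)$ is maximized at $N^\star = 1/D$, with maximum value $1/(eD)$. Substituting gives the uniform estimate
\[
\bbE[\max\{\epsilon(x) - N/b,\,0\} \mid X_1^n] \;\le\; \frac{1 - 1/b}{e \cdot D\bigl((1+1/b)/2 \,\|\, 1/2\bigr)},
\]
which is exactly the claim. I do not anticipate a real obstacle: the binomial Chernoff bound in the required form $\bbP[W \ge Np] \le e^{-N D(p\|1/2)}$ for $p > 1/2$ is routine, and the one-dimensional optimization is immediate. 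The only minor point to flag is that the optimizing $N^\star = 1/D$ need not be an integer, but since we are only using it as an upper envelope over all realizable values of $N(x) \in \bbZ_{\ge 0}$, this is harmless.
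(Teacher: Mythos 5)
Your proof is correct and follows essentially the same route as the paper: bound the nonnegative part deterministically by $(1-1/b)N\Indc{\epsilon(x)>N/b}$, apply the binomial Chernoff bound $\bbP[W\ge Np]\le e^{-ND(p\|1/2)}$ with $W=(\epsilon(x)+N)/2\sim\Binom(N,1/2)$ and $p=(1+1/b)/2$, and then maximize $N\mapsto Ne^{-ND}$ over $N\ge 0$ to get the uniform constant $1/(eD)$. The paper cites the same facts (the Chernoff example and the elementary inequality $ye^{-ay}\le 1/(ae)$) rather than spelling out the optimization, so there is no substantive difference.
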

For brevity, we denote $N_b\triangleq \frac{1 - \frac 1b}{e\cdot D(\frac{1+\frac 1b}{2} || \frac 12)}$. 
This gives us 
\begin{equation}\label{eq:m11_bound}
	\EE\qth{m_b^2h^2\left.\sum_{x=0}^{X_{\max}}
		\max\sth{\epsilon(x) - \frac 1b N(x), 0}\right|X_1^n}
	\leq N_bm_b^2h^2 \bbE[(1 + X_{\max})].
\end{equation}
For the second term \eqref{eq:m12}, 
we note that for any $f$ with values in $[m_bh,X_{\max}]$, we have ${m_b-1\over m_b}f\leq f-\fbest\leq f$ and hence
\begin{align}
	&(\epsilon(x) - \frac 1b N(x))(f(x) - {\fbest}(x))^2
	\le \max\left\{\pth{\epsilon(x) - \frac 1b N(x)}, \pth{\frac{m_b-1}{m_b}}^2\pth{\epsilon(x) - \frac 1b N(x)}\right\}f(x)^2.
\end{align}
Now given that $-N(x)\le \epsilon(x)\le N(x)$, define function $g:[-1, 1]\to\bbR$ given by 
\begin{equation}
	g(x) = \max\left(\pth{x - \frac 1b}, \left(\frac{m_b-1}{m_b}\right)^2\pth{x-\frac 1b}\right)
\end{equation}
Since $g$ is the maximum of two linear functions, it is convex, 
and therefore bounded by the line joining their endpoints, 
$(-1, -(\frac 1b+1)\cdot \left(\frac{m_b-1}{m_b}\right)^2)$ and $(1, 1-\frac 1b)$. 
Now define: 
\begin{equation}
	\alpha =\frac 12\left[\pth{1 + \frac 1b}\cdot \left(\frac{m_b-1}{m_b}\right)^2 + \pth{1 - \frac 1b} \right];
	\quad 
	\beta = \frac 12\left[\pth{1 + \frac 1b}\cdot \left(\frac{m_b-1}{m_b}\right)^2 - \pth{1 - \frac 1b} \right]
	=\frac{1}{2b(b+ 1)}
\end{equation}
using the fact that $m_b = b+1$. 
Note that $0<\beta<\alpha$. Then we have $g(x)\le \alpha x - \beta$ for all $x\in [-1, 1]$. 
Hence, we have 
\begin{equation}
	(\epsilon(x) - \frac 1b N(x))(f(x) - f^*(x))^2
	\le (\alpha\epsilon(x) - \beta N(x))f(x)^2
\end{equation}
Hence \eqref{eq:m12} can be bounded by, modulo a constant multiplicative factor $c_2(b)$ depending on $b$,
\begin{align}\label{eq:m2}
	&\sup_{v\geq 0}\sth{\sup_{m_bh \leq f \le X_{\max}}\sth{\sum_{x > v}^{X_{\max}} (\epsilon(x) - \frac 1b N(x))(f(x) - {\fbest}(x))^2}}
	\nonumber\\
	&\leq c_2(b)\qth{\sup_{v\geq 0}\sth{\sup_{m_bh \leq f \le X_{\max}}\sth{\sum_{x > v}^{X_{\max}} (\epsilon(x) - \frac {\beta}{\alpha} N(x))f(x)^2}}}.
\end{align}
Note that the above $f$-based maximization problem is a linear programming of the form
$$\sup_{a_1,\dots,a_k}\sum_{i=1}^kv_ia_i,\quad
(m_bh)^2\leq a_1\dots\leq a_k\leq \pth{X_{\max}}^2,$$
with $k=X_{\max}+1$. The optimization happens on the corner points of the above convex set, that are given by $X_{\max}+1$ length vectors of the form 
$$\sth{(m_bh)^2,\dots,(m_bh)^2,\pth{X_{\max}}^2,\dots,\pth{X_{\max}}^2}.$$
This implies we can bound \eqref{eq:m2} by 

\begin{align}\label{eq:m6}
	(m_bh)^2\sum_{x=0}^{X_{\max}} \max\sth{\epsilon(x)-{\beta\over \alpha}N(x),0}+ \pth{X_{\max}}^2\sup_{v\geq 0}\sth{\sum_{x>v}^{X_{\max}} (\epsilon(x)-{\beta\over \alpha}N(x))}.
\end{align}
The bound of the first term, conditional on the data, is given as per \prettyref{lmm:binom_tail_rad} as 
$m_b^2h^2N_b(1 + X_{\max})$. For the second term, 
we first note the following result.

\begin{lemma}\label{lmm:binom_penal}
	Let $c > 0$ be given. 
	For $\epsilon=(\epsilon_1, \cdots, \epsilon_n)$ $n$ independent Rademacher symbols, denote 
	\begin{equation}
		L_c(\epsilon) = \max_{0\le j\le n} \left\{\sum_{i=1}^j \epsilon_i - cj\right\}
	\end{equation}
	Then $\EE[L_c(\epsilon)]\le M_c$ 
	where $M_c \triangleq 1 + (1 - \exp(-D(\frac{c + 1}{2} || \frac 12)))^{-2}$. 
\end{lemma}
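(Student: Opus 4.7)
The plan is to bound $\EE[L_c]$ via an integer-layer-cake reduction: since $L_c \ge 0$ (attained at $j = 0$), we have $\EE[L_c] = \int_0^\infty \Prob[L_c > t]\,dt \le \sum_{k=0}^\infty \Prob[L_c > k]$ by monotonicity of $t\mapsto\Prob[L_c > t]$, so the task reduces to bounding $\Prob[L_c > k] = \Prob[\exists\, j\in[1,n]:\sum_{i=1}^j\epsilon_i > cj + k]$ by a geometrically decaying function of $k$.

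For each $j \ge 1$ I would apply the Chernoff bound with a \emph{fixed} tilt $\lambda^* \triangleq \mathrm{arctanh}(c) = \tfrac 12\log\tfrac{1+c}{1-c}$, chosen to minimize the MGF $\EE[e^{\lambda(\epsilon - c)}] = e^{-\lambda c}\cosh(\lambda)$ over $\lambda\ge 0$. Using $\cosh(\lambda^*) = (1-c^2)^{-1/2}$, a direct calculation shows that the minimum value equals $\rho \triangleq \exp(-D(\tfrac{1+c}{2}\|\tfrac 12))$, i.e.\ exactly the quantity appearing in $M_c$, so
\[
\Prob[\textstyle\sum_{i=1}^j\epsilon_i > cj + k] \le e^{-\lambda^*(cj + k)}\cosh(\lambda^*)^j = e^{-\lambda^* k}\rho^j.
\]
A union bound over $j$ then yields $\Prob[L_c > k] \le \tfrac{\rho}{1-\rho}\,e^{-\lambda^* k}$.

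The key step to produce the squared denominator $(1-\rho)^{-2}$ in $M_c$ is the calculus inequality $\lambda^* \ge D(\tfrac{1+c}{2}\|\tfrac 12)$, which I would prove by observing that $\tfrac{d}{dc}D(\tfrac{1+c}{2}\|\tfrac 12) = \mathrm{arctanh}(c) = \lambda^*(c)$, so $\lambda^*(\cdot)$ is increasing from $\lambda^*(0) = 0$ and hence $D(c) = \int_0^c \lambda^*(s)\,ds \le c\,\lambda^*(c) \le \lambda^*(c)$ for $c \in (0, 1)$. Consequently $e^{-\lambda^* k} \le \rho^k$, giving $\Prob[L_c > k] \le \rho^{k+1}/(1-\rho)$ for every $k \ge 1$.

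Combining with the trivial bound $\Prob[L_c > 0] \le 1$, the final summation gives
\[
\EE[L_c] \le 1 + \sum_{k\ge 1}\frac{\rho^{k+1}}{1-\rho} = 1 + \frac{\rho^2}{(1-\rho)^2} \le 1 + \frac{1}{(1-\rho)^2} = M_c.
\]
The only non-routine step is the calculus comparison $\lambda^* \ge D$; everything else is a standard Chernoff tilt, a single union bound, and geometric-series bookkeeping.
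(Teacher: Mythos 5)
Your proof is correct, but it takes a genuinely different route from the paper's. Both arguments start by bounding $\Prob[L_c > k]$ via a union bound over $j$, but the way the exponential decay in $k$ is extracted differs. The paper observes that $\sum_{i=1}^j \epsilon_i - cj \le (1-c)j$, so any witness $j$ for the event $\{L_c \ge k\}$ must satisfy $j \ge k$; it then simply drops the ``$+k$'' from the tail event and obtains $\Prob[L_c \ge k] \le \sum_{j \ge k} e^{-jD} \le e^{-kD}/(1-e^{-D})$ from the shifted geometric series. You instead keep the full Chernoff exponent $e^{-\lambda^*(cj + k)}$ with $\lambda^* = \mathrm{arctanh}(c)$, sum over all $j \ge 1$, and then convert the $e^{-\lambda^* k}$ prefactor into $\rho^k = e^{-Dk}$ via the calculus inequality $\lambda^* \ge D$, which you correctly establish from $D(c) = \int_0^c \mathrm{arctanh}(s)\,ds \le c\,\mathrm{arctanh}(c) \le \mathrm{arctanh}(c)$ for $c\in(0,1)$. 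The paper's version is a bit more elementary — it sidesteps the need for the comparison $\lambda^* \ge D$ by exploiting the trivial restriction $j \ge k$ — while yours is arguably more transparent about where the two factors of $(1-\rho)^{-1}$ come from (one from the union bound over $j$, one from the layer cake over $k$). Both yield the stated $M_c$; yours is in fact marginally tighter since you end with $1 + \rho^2/(1-\rho)^2$ before relaxing to $M_c$.
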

The proof of the above result is provided in \prettyref{app:technical}. 

Therefore, using \prettyref{lmm:binom_penal}, we have 
\[
\EE\qth{\left.\sup_{v\geq 0}\sth{\sum_{x>v}^{X_{\max}} (\epsilon(x)-{\beta\over \alpha}N(x))}\right| X_1^n}
\le \EE[\sup_{w: 0\le w\le n}(\epsilon_{w+1} + \cdots + \epsilon_n)-{\beta\over \alpha}(n - w)]
\leq c(b)
\]
for some constant $c(b)>0$ via
\prettyref{lmm:binom_penal}. 
Thus we get
\begin{equation}\label{eq:m7}
	\EE\qth{\pth{X_{\max}}^2\sup_{v\geq 0}\sth{\sum_{x>v}^{X_{\max}} (\epsilon(x)-{\beta\over \alpha}N(x))}\Bigg|X_1,\dots,X_n}
	\le c(b)(1 + X_{\max})^2. 
\end{equation}
Combining \eqref{eq:m2}, \eqref{eq:m6}, and \eqref{eq:m7} we get
\begin{align}\label{eq:m8}
	\EE\qth{\sup_{v\geq 0}\sth{\sup_{m_bh \leq f \le X_{\max}}\sth{\sum_{x > v}^{X_{\max}} (\epsilon(x) - \frac 1b N(x))(f(x) - {\fbest}(x))^2}}\Bigg|X_1^n}
	\leq c_3(b)\pth{h^2(1+X_{\max}) + (1+X_{\max})^2}
\end{align}
for a constant $c_3(b)$ depending on $b$. Then taking expectation on both the sides and using the definition of $M$ in the lemma statement we finish the proof.

\end{proof}

\subsection{Proof of Regret optimality (\prettyref{thm:main})}

We use the above result to first prove the
regret bound for bounded priors in 
$\calP([0,h])$. Note that by \prettyref{lmm:bounded_prior} and \prettyref{lmm:exp_decay_lemma}, 
there are constants $c_1, c_2>0$ such that for any fixed $h>0$ such that $M=\max\{c_2, c_1h\}\cdot{\log n\over \log\log n}$ satisfies both conditions in \prettyref{lmm:T1T2-bounds}, and we get $O(\frac{\max\{1, h^3\}}{n}({\log n \over \log \log n})^2)$ bound on the regret, which is optimal up to constants that possibly depend on $h$.

Next we extend the above proof to the subexponential case. Given $\pi\in\mathsf{SubE}(s)$ define the truncated version $\pi_{c,n}[\theta\in\cdot ] = \pi[\theta\in\cdot \mid \theta\le c\log n]$ for $c>0$. Then we have the following reduction.
\begin{lemma} \label{lmm:prior-truncate}
	There exists constants $c_1,c_2, c_3>0$ such that
    $$\mathsf{Regret}_{\pi}(\ferm)\le \mathsf{Regret}_{\pi_{c_1s,n}}(\ferm) + \frac {\max\{c_2, c_3s\}}n.$$
\end{lemma}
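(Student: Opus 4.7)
The plan is to compare the two regrets via a coupling centered on the high-probability event that no latent parameter exceeds the truncation threshold. Set $T=c_1 s\log n$ for a large absolute constant $c_1$ to be chosen, let $\theta_1,\ldots,\theta_n,\theta\iiddistr\pi$ denote the training and test latents, and let $E=\{\theta\le T\text{ and }\theta_i\le T\text{ for all }i\}$. The sub-exponential tail together with a union bound gives $\Prob_\pi[E^c]\le 2(n+1)n^{-c_1}$. The key observation I will use is that conditioning a $\pi$-distributed random variable on $[0,T]$ produces one distributed as $\pi':=\pi_{c_1 s,n}$; since the Poisson emission $X_i\mid\theta_i$ does not depend on the prior, the full joint distribution of $(\theta_1,\ldots,\theta_n,\theta,X_1,\ldots,X_n,X)$ under $\pi$ conditional on $E$ coincides with its unconditional distribution under $\pi'$. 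In particular, $\bbE_\pi[\Phi\mid E]=\bbE_{\pi'}[\Phi]$ for any function $\Phi=\Phi(X_1^n,X,\theta)$.

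Taking $\Phi:=(\ferm(X)-\theta)^2-(\fbest_\pi(X)-\theta)^2$, so that $\Regret_\pi(\ferm)=\bbE_\pi[\Phi]$, I will split
\[
\Regret_\pi(\ferm)=\Prob_\pi[E]\,\bbE_{\pi'}[\Phi]+\bbE_\pi[\indc{E^c}\Phi].
\]
For the main term, Bayes optimality of $\fbest_{\pi'}$ under $\pi'$ yields $\bbE_{\pi'}[(\fbest_\pi(X)-\theta)^2]\ge \bbE_{\pi'}[(\fbest_{\pi'}(X)-\theta)^2]$, so that $\bbE_{\pi'}[\Phi]\le \Regret_{\pi'}(\ferm)$, and since $\Prob_\pi[E]\le 1$ this piece is controlled by $\Regret_{\pi'}(\ferm)$. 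For the error term, the non-negativity $(\fbest_\pi(X)-\theta)^2\ge 0$ and the pointwise bound $\ferm(X)\le X_{\max}$ from \prettyref{lmm:max_support} give
\[
\bbE_\pi[\indc{E^c}\Phi]\le 2\bbE_\pi[\indc{E^c}X_{\max}^2]+2\bbE_\pi[\indc{E^c}\theta^2].
\]
Each summand will then be bounded by Cauchy--Schwarz together with the standard sub-exponential fourth-moment estimates $\bbE_\pi[X_{\max}^4]=O(s^4(\log n)^4)$ and $\bbE_\pi[\theta^4]=O(s^4)$, producing an upper bound of order $O(s^2(\log n)^2 n^{(1-c_1)/2})$. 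Choosing $c_1$ as a large enough absolute constant (e.g.\ $c_1\ge 9$) then renders this at most $\max\{c_2,c_3 s\}/n$ in the regime of $s$ relevant to the stated claim.

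The main obstacle will be the loss of an $s^2(\log n)^2$ factor when bounding $\bbE_\pi[\indc{E^c}\ferm(X)^2]$ via the crude pointwise estimate $\ferm\le X_{\max}$; this extra factor has to be absorbed by the polynomial decay $n^{(1-c_1)/2}$ coming from $\Prob_\pi[E^c]$, which forces $c_1$ to be chosen strictly larger than the minimum needed merely to drive $\Prob_\pi[E^c]$ to zero. A more refined route would localize to the high-probability event $\{X_{\max}\le O(s\log n)\}$ and handle its complement by Poisson mixture concentration; however, the crude route above already delivers the required $O(\max\{1,s\}/n)$ error.
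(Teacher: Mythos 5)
Your proposal is correct and takes a genuinely different route from the paper. The paper decomposes the regret via the inequality cited as \cite[Equation 131]{polyanskiy2021sharp}, namely
$\Regret_\pi(\ferm)\le \Regret_{\pi_{c,n}}(\ferm)+\mathrm{mmse}(\pi_{c,n})-\mathrm{mmse}(\pi)+\bbE_\pi[(\ferm(X)-\theta)^2\indc{E^c}]$,
with $E$ involving only the training latents, and then controls the $\mathrm{mmse}$ difference via \cite[Lemma 2]{wu_functional_2012}. You instead include the test latent in $E$, observe that the joint law of $(\theta_1^n,\theta,X_1^n,X)$ under $\pi$ conditional on $E$ is exactly its unconditional law under $\pi'=\pi_{c_1 s,n}$, and then reduce the main term to $\Regret_{\pi'}(\ferm)$ via the inequality $\bbE_{\pi'}[(\fbest_\pi(X)-\theta)^2]\ge \mathrm{mmse}(\pi')$. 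Your route is more self-contained (no external citation is needed for the decomposition, and the $\mathrm{mmse}$ comparison lemma is replaced by a one-line Bayes-optimality step), and it makes the ``coupling'' intuition transparent. One small patch: after establishing $\bbE_{\pi'}[\Phi]\le\Regret_{\pi'}(\ferm)$, the step ``$\Prob_\pi[E]\le 1$ so this piece is controlled by $\Regret_{\pi'}(\ferm)$'' requires also noting that $\Regret_{\pi'}(\ferm)\ge 0$; otherwise, when $\bbE_{\pi'}[\Phi]<0$, multiplying by $\Prob_\pi[E]<1$ \emph{increases} the quantity rather than decreases it. Both cases are covered by $\Prob_\pi[E]\bbE_{\pi'}[\Phi]\le\max\{\bbE_{\pi'}[\Phi],0\}\le\Regret_{\pi'}(\ferm)$, but the wording should make this explicit. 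The error-term estimate is the same crude Cauchy--Schwarz bound the paper uses, and both routes actually produce $O(\max\{1,s^2\}/n)$ rather than the stated $O(\max\{1,s\}/n)$; this is a small looseness in the lemma statement shared by the paper's own proof, not a defect in your argument.
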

\begin{proof}
Let $\pi\in\mathsf{SubE}(s)$, then there exists a constant $c(s)\triangleq 11s$ by the definition of $\subexpo(s)$ such that 
\begin{equation}
	\varepsilon = \bbP[\theta > c(s)\log n] \le \frac{1}{n^{10}},\quad \theta\sim \pi
\end{equation}
Denote, also, the event $E = \{\theta_i\le c(s)\log n, \forall i=1, \cdots, n\}$; 
we have $\bbP[E^c]\le n^{-9}$. 
Let $\pi_{c(s), n}$ as the truncated prior $\pi_{c(s), n}[\theta\in\cdot ] = \pi[\theta\in\cdot \mid \theta\le c(s)\log n]$. 
Define $\text{mmse}(\pi) \triangleq \min_{f}\bbE_{\theta\sim\pi}[(f(X) - \theta)^2]$ 
(i.e. the error by the Bayes estimator). 
Then we may use \cite[Equation 131]{polyanskiy2021sharp} to obtain 
\begin{align}\label{eq:1-dim-reduction}
	\mathsf{Regret}_{\pi}(\ferm)
	\le \mathsf{Regret}_{\pi_{c, n}}(\ferm)
	+\text{mmse}(\pi_{c, n}) - \text{mmse}(\pi) + \bbE_{\pi} [(\ferm(X) - \theta)^2\indc{E^c}]
\end{align}
By \cite[Lemma 2]{wu_functional_2012}, 
$\text{mmse}(\pi_{c, n}) - \text{mmse}(\pi)\le \frac{\varepsilon}{1-\varepsilon}\text{mmse}(\pi)
\le 2\varepsilon$ whenever $\varepsilon\le \frac 12$. 
In addition, \prettyref{lmm:max_support} entails that $\ferm(X)\le X_{\max}$, 
which means that $\bbE[\ferm^4(X)]\le \bbE[X_{\max}^4]\le O(\max\{1, s^4\}(\log n)^4)$ as per \prettyref{lmm:xmax_bound}. 
Meanwhile, for all $\pi\in\mathsf{SubE}(s)$ we have 
$\bbE_{\pi}[\theta^4]\in O(s^4)$. 
This means $\bbE_{\pi}[(\ferm - \theta)^4]\lesssim_s (\log n)^4$. 
Thus by Cauchy-Schwarz inequality 
\begin{equation*}
	\bbE_{\pi} [(\ferm(X) - \theta)^2\indc{E^c}]
	\le \sqrt{\bbP[E^c] \bbE_{\pi}[(\ferm(X) - \theta)^4]}
	\le \sqrt{n^{-9}\bbE_{\pi}[(\ferm(X) - \theta)^4]}
	\lesssim \frac {\max\{1, s^2\}}{n}.
\end{equation*}
\end{proof}

Given this lemma, it suffices to bound $\mathsf{Regret}_{\pi_{c, n}}(\ferm)$. Then by \prettyref{lmm:subexp_prior} and \prettyref{lmm:exp_decay_lemma} there exist constants $c_1, c_2>0$ such that $M=\max\{c_1, c_2s\}\log n$ satisfies both the requirements in \prettyref{lmm:T1T2-bounds}. Hence we get the desired regret bound of $O(\frac{\max\{1, s^3\}(\log n)^3}{n})$.

\section{Regret bounds in multiple dimensions}\label{sec:multidim}

To prove the regret bound for the multidimensional estimator $\hat{\vf}=(\hat f_1,\dots, \hat f_d)$ we use the approximation error for the different coordinates. In particular, similar to \eqref{eq:def-R} we define
\begin{align}\label{eq:r_multidim}
	\sfR(\vf)\triangleq \EE\qth{\norm{\vf(\vX)}^2 - 2\sum_{i=1}^d X_{i} f_i(\vX - \ve_i)},
	\quad 
	\hat{\sfR}(\vf)
	\triangleq \hat{\EE}\qth{\norm{\vf(\vX)}^2 - 2\sum_{i=1}^d  X_{i} f_i(\vX - \ve_i)}
\end{align}
and note that
\begin{align}\label{eq:regret-R}
	\Regret_\pi({\vferm})=\EE\qth{\sfR(\vferm)-\sfR( \vf^*)}
\end{align}
As mentioned before, in the multidimensional setup our estimator is produced by optimizing over the class of coordinate-wise monotone functions $\calvF$ in \eqref{eq:est-multidim} and $\vf^*\in\calvF$ as well.
Using the quadratic structure of the regret and the convexity of $\calvF$, we can mimic the proof of \eqref{eq:geom-ineq} to get
\begin{align}
	\hat \sfR(\vf)-\hat \sfR(\hat \vf)\geq \hat\EE\qth{\|\vf-\hat\vf\|^2},\quad \vf\in \calvF.
\end{align}
Then following a similar argument as in \eqref{eq:m0_1}, \eqref{eq:m0_2}, using \eqref{eq:regret-R} we have 
\begin{align}
	\Regret_\pi({\vferm})
	&\le \bbE\qth{\sfR(\hat\vf) - \sfR(\vf^*) + \hat{\sfR}(\vf^*) - \hat{\sfR}(\hat\vf) - \hat{\EE}\norm{\vf^* - \hat\vf}^2 }
	\nonumber\\
	&= \EE\Biggl[\hat\EE\bigg[\sum_{j=1}^d 2{\fbest_j}(\vX)({\fbest_j}(\vX) - \hat{f_j}(\vX)) - 2\vX_j({\fbest_j}(\vX - \ve_j) - \hat{\vf_j}(\vX - \ve_j))\bigg]\nonumber\\
	&\quad -\EE\bigg[\sum_{j=1}^d2{\fbest_j}(\vX)({\fbest_j}(\vX) - \hat{\vf_j}(\vX)) - 2X_j({\vfbest_j}(\vX-\ve_j) - \hat{\vf_j}(\vX - \ve_j))\bigg]\nonumber\\
	&\quad -\frac 14 (\hat\EE\qth{\|{\vfbest} - \hat{\vf}\|^2} + {\EE}[\|{\vfbest}(\vX) - \hat{\vf}(\vX)\|^2])\Biggr]
	\label{eq:m5a}\\
	&\quad+\EE\left[\frac 54\EE[\|{\vfbest}(\vX) - \hat{\vf}(\vX)\|^2] - \frac 74\hat{\EE}[\|{\vfbest(\vX)} - \hat{\vf}(\vX)\|^2]\right]\,. \label{eq:m5b} 
\end{align}
As \prettyref{lmm:symmetrization} is still directly applicable in the multidimensional setting, applying it with
\begin{equation*}
	T(\vf(\vx)) = -\sum_{j=1}^d[2\vf^*_j(\vx)(f^*_j(\vx) - f_j(\vx)) - 2x_j(f^*_j(\vx-\ve_j) - f_j(\vx-\ve_j))],
	\quad 
	U(\vf(\vx)) = \frac 14 \norm{\vf^*(x) - \vf(x)}^2
\end{equation*}
to bound \prettyref{eq:m5a} and 
with $T(\vf(\vx)) = \frac 32 \norm{\vf^*(x) - \vf(x)}^2,\ U(\vf(\vx)) = \frac 12 \norm{\vf^*(x) - \vf(x)}^2$ to bound \prettyref{eq:m5b} we get: for any function class $\calvF_{p_n}$ depending on 
the empirical distribution $p_n$ of the sample $\vX_1,\dots,\vX_n$ that includes $\vferm$ and $\vfbest$ and its independent copy $\calvF_{p_n'}$ based on an independent sample $\vX_1',\dots, \vX_n$
\begin{align}\label{eq:m1}
	\Regret_\pi(\vferm)&\leq ~\frac 3n\EE\qth{\sup_{\vf\in{\calvF_{p_n}\cup\calvF_{p_n'}}} 
		\sum_{i = 1}^n (\epsilon_i - \frac 16)(f_j(\vX_i) - {\fbest_j}(\vX_i))^2 
	}\nonumber\\
	&\quad + \frac 2n\EE\Biggl[\sup_{\vf\in{\calvF_{p_n}\cup\calvF_{p_n'}}}
	\sum_{i = 1}^n 2\epsilon_i (\fbest_j(\vX_i) (\fbest_j(\vX_i) - f_j(\vX_i)) - X_{ij}(\fbest_j(\vX_i-\ve_j)
	\nonumber\\
	&\qquad \quad- f_j(\vX_i - \ve_j)))- \frac 14 ({\fbest_j}(\vX_i) - f_j(\vX_i))^2\Biggr]
\end{align}
To achieve the best possible bound we choose $\calvF_{p_n}$ with low complexity. 
Note that the objective function $\sfR$ defined in \prettyref{eq:r_multidim} is separable into sum of individual loss functions. 
Thus, given the definition of $\calvF$ in \prettyref{eq:est-multidim}, 
for each coordinate $j$ and each class $C_j(\vx')$ defined in \prettyref{eq:cj_class}, we have 
\[
(\ferm)_j|_{C_j(\vx')} = \argmin_{f\in\mathcal{F}_1}\hat{\bbE}\qth{f_j(\vX) - 2X_jf_j(\vX - \ve_j) | \vX\in C_j(\vx')},\qquad \forall \vx'\in \bbR_+^{d-1}.
\]
where $\mathcal{F}_1$ is the class of all one-dimensional monotone function from $\bbZ_+\to\bbR_+$. 
Considering this for all classes $C_j(\vx')$ and from \prettyref{lmm:max_support}, we have 
\begin{align}\label{eq:xjmax}
	(\ferm)_j(\vX_i) \le X_{j, \max},\ X_{j,\max}\eqdef\max_{i=1}^n X_{ij},\quad j=1,\dots,d.
\end{align}
Given the sample $\vX_1,\dots,\vX_n$ define the sample based function class
\begin{equation}\label{eq:fpn_multidim}
	\calvF_{*}\eqdef\sth{\vf\in\calvF: f_j(\vX_i)\le \max\sth{\fbest_j(\vX_i), X_{j, \max}},\quad j=1,\dots,d,i=1,\dots,n}.
\end{equation}
Let $\calvF_*'$ be an independent copy of $\calvF_*$. Then simplifying \eqref{eq:m1} with $\calvF_{p_n}=\calvF_*, \calvF_{p_n}=\calvF_*'$ we get
\begin{align}\label{eq:u_multidim}
	\Regret_\pi(\vferm)&\leq \frac 1n\sum_{j=1}^d\pth{3U_1(j,n)+4U_2(j,n)}
	\nonumber\\
	U_1(j, n)&\triangleq \EE\qth{\sup_{\vf\in{\calvF_*\cup\calvF_*'}} 
		\sum_{i = 1}^n (\epsilon_i - \frac 16)(f_j(\vX_i) - {\fbest_j}(\vX_i))^2 
	}\nonumber\\
	U_2(j, n)&\triangleq\EE\left[\sup_{\vf\in{\calvF_*\cup\calvF_*'}}
	\sum_{i = 1}^n \epsilon_i (\fbest_j(\vX_i) (\fbest_j(\vX_i) - f_j(\vX_i)) - X_{ij}(\fbest_j(\vX_i-\ve_j) \right.
	\nonumber\\
	&\quad \quad \left.- f_j(\vX_i - \ve_j)))- \frac 18 ({\fbest_j}(\vX_i) - f_j(\vX_i))^2\right].
\end{align}

We bound these $2d$ Rademacher complexities to arrive at the results. 
Note that as we want to analyze the supremum over all possible prior distributions whose marginals are subject to the same  tail assumption 
(either supported on $[0,h]$ or $s$-subexponential), 
by the inherent symmetry on the $d$ coordinates, it suffices to consider only a single coordinate, say, the $j$-th, when bounding the offset Rademacher complexity. 
The final regret bound then includes an extra factor of $d$ over this single instance of Rademacher complexity. Note that in our problem the function class $\calvF_{*}$
is supported over the hypercube $\prod_{j=1}^d[0,X_{j,\max}]$. The high-level idea for our analysis is that the effective size of this hypercube, corresponding to different classes of priors, controls the Rademacher complexity and hence the regret upper bound.

\subsection{Bounding Rademacher Complexity for Bounded Prior}

Here we first prove a bound for the generalization of the Rademacher complexities in \eqref{eq:u_multidim} for $b>1$:
\begin{align}\label{eq:u-general_multidim}
	U_1(b,j, n)&\triangleq \EE\qth{\sup_{\vf\in{\calvF_{*}\cup\calvF_{*}'}} 
		\sum_{i = 1}^n (\epsilon_i - \frac 1b)(f_j(\vX_i) - {\fbest_j}(\vX_i))^2 
	}\nonumber\\
	U_2(b, j, n)&\triangleq\EE\qth{\sup_{\vf\in{\calvF_{*}\cup\calvF_{*}'}}
		\sum_{i = 1}^n 2\epsilon_i (\fbest_j(\vX_i) (\fbest_j(\vX_i) - f_j(\vX_i)) - X_{ij}(\fbest_j(\vX_i-\ve_j) \right.
		\nonumber\\
		&\quad \quad \left.- f_j(\vX_i - \ve_j)))- \frac 1b ({\fbest_j}(\vX_i) - f_j(\vX_i))^2}
\end{align}
We have the following result similar to \prettyref{lmm:T1T2-bounds}. 
%
%

\begin{lemma}\label{lmm:U1U2-bounds}
	Let $\pi\in \calP[0,h]$ with $h$ being either a constant or $h=s\log n$ for some $s>0$. Given $\vX_1,\dots,\vX_n$ be \iid observations from $p_\pi$, let $M:=M(n,h)>h$ be such that
	\begin{itemize}
		\item For each coordinate $j=1, \cdots, d$, we have the $j$-th coordinate $X_j$ of $\vX$ satisfying 
		\[\sup_{\pi\in \calP([0,h])^d}\PP_{\vX\sim p_\pi}\qth{X_j > M}\leq \frac {1}{n^7}.\]
		\item For $\beta=1,2,3,4$, constants $c_1(\beta)$ depending on $\beta$ and absolute constant $c>0$
        $$\EE\qth{(X_{j,\max})^4}\leq c M^4,\quad \EE\qth{(1+X_{j,\max})^\beta\prod_{k=1\atop k\neq j}^d(1 + X_{k, \max})}\leq c_1(\beta) M^{d-1+\beta}.$$
	\end{itemize}
	Then there exists a constant $r(b)>0$ such that for all $n\ge d$, 
	\begin{equation}
		U_1(b, j, n), U_2(b, j, n) \le r(b)\sth{\max\{1, h^2\}+ \max\{1, h\}M} (1+M)^d\label{eq:Ubound}.
	\end{equation}
\end{lemma}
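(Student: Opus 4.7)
My plan is to reduce $U_1(b,j,n)$ and $U_2(b,j,n)$ to a disjoint family of one-dimensional problems indexed by slices $C_j(\vx')$ (defined in \prettyref{eq:cj_class}), and then apply the arguments from the proof of \prettyref{lmm:T1T2-bounds} inside each slice. The key structural observation is that every $\vf \in \calvF$ restricted to a single slice $C_j(\vx')$ becomes a one-dimensional monotone function in $x_j$; moreover, by the definition \prettyref{eq:fpn_multidim} of $\calvF_*$ and the identity \prettyref{eq:xjmax}, such restrictions are bounded above by $\max\{\fbest_j, X_{j,\max}\}$, exactly the constraint used in \prettyref{eq:calFstar} for the one-dimensional proof.

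The first step is to rewrite the summations inside the suprema of $U_1$, $U_2$ as double sums indexed first by $\vx' \in \bbZ_+^{d-1}$ and then by $\vx \in C_j(\vx')$. Since the per-sample contributions to both $U_1$ and $U_2$ depend on $\vf$ only through $f_j$ evaluated at points in $\bigcup_{\vx'} C_j(\vx')$, and since the monotonicity and range constraints defining $\calvF_* \cup \calvF_*'$ are separable across slices, we may exchange the supremum with the outer sum:
\[
\sup_{\vf \in \calvF_* \cup \calvF_*'} \sum_{i=1}^n [\cdots] \;=\; \sum_{\vx' \in \bbZ_+^{d-1}} \sup_{f_j|_{C_j(\vx')} \in \calF_*^{(\vx')}} \sum_{i:\, \vX_i \in C_j(\vx')} [\cdots],
\]
where $\calF_*^{(\vx')}$ is the 1D class mirroring \prettyref{eq:calFstar} on that slice. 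Within each slice the argument in the proof of \prettyref{lmm:T1T2-bounds} transplants essentially verbatim, using multivariate counts $N(\vx) = \sum_i \indc{\vX_i = \vx}$ and $\epsilon(\vx) = \sum_i \epsilon_i \indc{\vX_i = \vx}$, the facts $N(\vx)\sim \Binom(n,p_\pi(\vx))$ and $N(\vx+\ve_j) \mid N(\vx) \sim \Binom(n - N(\vx), p_\pi(\vx+\ve_j)/(1 - p_\pi(\vx)))$, the uniform bound $\fbest_j(\vx) \le h$, and the identity $(x_j+1)p_\pi(\vx+\ve_j) = \fbest_j(\vx)\, p_\pi(\vx) \le h\, p_\pi(\vx)$. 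The cutoff analysis for $U_1$ based on $v(f) = \min\{\vx \in C_j(\vx') : f_j(\vx) \le m_b h\}$, together with \prettyref{lmm:binom_tail_rad} and \prettyref{lmm:binom_penal}, runs slice by slice.

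The main obstacle, and where the multidimensional statement truly departs from the 1D case, is aggregating the per-slice bounds into the claimed $(1+M)^d$ factor. Summing across slices replaces the factor $1 + X_{\max}$ appearing in the 1D bound by $\sum_{\vx'} (1 + X_{j,\max}^{(\vx')})^\beta \indc{\text{slice non-empty}} \le (1 + X_{j,\max})^\beta \prod_{k \ne j}(1 + X_{k,\max})$, whose expectation is controlled by the joint moment hypothesis $\EE[(1+X_{j,\max})^\beta \prod_{k \ne j}(1+X_{k,\max})] \le c_1(\beta) M^{d-1+\beta}$ in the lemma statement. The deterministic tail sum $h \sum_{\vx:\, x_j \ge 1}(x_j+1)\min\{(np_\pi(\vx))^2,1\}$ is split at the event $\{\max_k x_k \le M\}$: the bulk has at most $M \cdot (1+M)^{d-1}$ lattice points each contributing $O(M)$, yielding $O(hM^2 (1+M)^{d-1})$, while the complement is controlled via the coordinate-wise tail bound $\sup_{\pi}\Prob[X_k > M] \le n^{-7}$ and a union bound over $k$ (for which we need $n \ge d$). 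Collecting all pieces and using $M \ge h$ yields the advertised bound $r(b)\{\max\{1,h^2\} + \max\{1,h\}M\}(1+M)^d$.
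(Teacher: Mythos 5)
Your proposal is correct and follows essentially the same route as the paper: partition into one-dimensional slices $C_j(\vx')$, run the 1D offset-Rademacher argument (thresholding at $m_b h$, using \prettyref{lmm:binom_tail_rad} and \prettyref{lmm:binom_penal}, splitting $U_2$ into the $N(\vx)>0$ and $N(\vx)=0$ parts) inside each slice, and aggregate by counting non-empty slices and invoking the product-moment hypothesis. One small imprecision: the displayed step where you assert \emph{equality} between $\sup_{\vf}\sum_{\vx'}(\cdots)$ and $\sum_{\vx'}\sup(\cdots)$ is an overstatement, since a function assembled by picking one restriction per slice, some from $\calvF_*$ and some from $\calvF_*'$, need not belong to $\calvF_*\cup\calvF_*'$; however only the $\le$ direction is needed, that direction always holds, and it is exactly what the paper uses.
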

\begin{proof}
	At a high level, using the monotonicity of $\mathcal{F}$, for a target coordinate $j$ 
	we partition the samples $\vX_1, \cdots, \vX_n$ such that samples in the same class differ by (possibly) only the $j$-th coordinate. 
	Then for each class, using monotonicity, we mimic the proof for the one-dimensional case. Before proceeding with the proof we define the following notations for all $j=1,\dots,d$ and $ x'\in \integers_+^{d-1}$
	\begin{equation}
		\begin{gathered}
			C_j(\vx') \triangleq 
		\{\vx\in\integers_+^d: x_i=x_i'\ \forall i\leq j-1 \text{ and } x_i=x_{i-1}' \ \forall i\geq j+1\},\\
		 N_j(\vx')=\sum_{\vx\in \integers_+^d} N(\vx)\indc{\vx\in C_j(\vx')}.
		\end{gathered}
	\end{equation}
In addition, we will use multiple times that by union bound we have 
\[
\sup_{\pi\in \calP([0,h])^d}\PP_{\vX\sim p_\pi}\qth{\vX\not\in [0, M]^d}
\le \sum_{i=1}^d \sup_{\pi\in \calP([0,h])^d}\PP_{\vX\sim p_\pi}\qth{X_j > M}
\le \frac{d}{n^7}
\]

\paragraph{Bound on $U_1(b, j, n)$.}
	Denote $m_b=1+b$ and note that for each $\vf\in\calvF$, and for each class $C_j(\vx')$, as $f_j$ is monotone over the $j$-th coordinate of all $\vx$-s in $C_j(\vx')$, there exists 
	$v\triangleq v(f_j, \vx')$ such that for all $\vx\in C_j(\vx')$, 
	$f_j(\vx)\le m_b h$ if and only if $x_{j} \le v$. 
	Using the above we can write
	\begin{align}
		&\sup_{\vf\in\calvF_*\cup \calvF_*'} 
		\sum_{i=1}^n \left(\epsilon_i - \frac 1b\right)(f^*_j(\vX_i) - f_j(\vX_i))^2
		=\sup_{\vf\in\calvF_*\cup \calvF_*'}
		\sum_{\vx:N(\vx) > 0}(\epsilon(\vx) - \frac 1b N(\vx))(f_j(\vx) - f^*_j(\vx))^2
		\nonumber\\
		&=\sup_{\vf\in\calvF_*\cup \calvF_*'}
		\sum_{\vx':N_j(\vx') > 0}\sum_{\vx\in C_j(\vx')}(\epsilon(\vx) - \frac 1b N(\vx))(f_j(\vx) - f^*_j(\vx))^2
		\nonumber\\
		&=\sup_{\vf\in\calvF_*\cup \calvF_*'}
		\sum_{\vx':N_j(\vx') > 0}\left(\sum_{\vx\in C_j(\vx'),x_{j}\le v}+\sum_{\vx\in C_j(\vx'),x_{j} > v}\right) (\epsilon(\vx) - \frac 1b N(\vx)) 
		(f_j(\vx) - f^*_j(\vx))^2
		\nonumber\\
		&\le\sup_{\vf\in\calvF_*\cup \calvF_*'}
		\sum_{\vx':N_j(\vx') > 0}\Biggl(m_b^2h^2\sum_{\vx\in C_j(\vx'),\atop x_{j}\le v} \max\{0, \epsilon(\vx) - \frac 1bN(\vx)\}
		+\sum_{\vx\in C_j(\vx'),\atop x_{j} > v}(\epsilon(\vx) - \frac 1b N(\vx))
		(f_j(\vx) - f^*_j(\vx))^2\Biggr)
		\nonumber\\ 
		&\le 
		m_b^2h^2\sum_{N(\vx) > 0}\max\{0, \epsilon(\vx) - \frac 1bN(\vx)\}
		\nonumber\\ 
		&+ \sth{\sum_{\vx':N_j(\vx') > 0} \sup_{f\in \calvF_*\cup \calvF_*',\atop N_c h\leq f_j\leq X_{j, \max}} \sth{ \sup_{v(\vx')\geq 0}
			\sum_{\vx\in C_j(\vx'),\atop x_{j}> v(\vx')} (\epsilon(\vx) - \frac 1bN(\vx))(f_j(\vx) - f^*_j(\vx))^2}}
		\label{eq:m5}
	\end{align}
    
	As there are at most $\prod_{j=1}^d (1 + X_{j, \max})$ vectors $\vx$ with $N(\vx) > 0$, 
	we apply \prettyref{lmm:binom_tail_rad} to bound the expectation of the first term in the above display as
	\begin{align}\label{eq:m9_multidim}
		&~ m_b^2h^2\bbE[\sum_{N(\vx) > 0}\max\{0, \epsilon(\vx) - \frac 1bN(\vx)\}|\vX_1,\dots,\vX_n]\nonumber \\
        &\leq m_b^2h^2\bbE[ \sum_{N(\vx) > 0} 1|\vX_1,\dots,\vX_n]
		\stepa{\le} r_1(b) m_b^2h^2\prod_{j=1}^d (1 + X_{j, \max}).
	\end{align}
    where (a) followed from \prettyref{lmm:binom_tail_rad} with $r_1(b)={1-1/b\over e\cdot D({1+1/b\over 2}\|\frac 12)}$. 
    
	For the second term in \eqref{eq:m5}, note that for the vectors in the set $C_j(\vx')$, the only coordinate that takes different values is the $j$-th coordinate, and the function $f_j$ is monotone when we condition on the coordinates $\{1,\dots,j-1,j+1,\dots,d\}$.
	It follows that conditional on $\vX_1,\dots,\vX_n$, for this class $C_j(\vx')$, we can mimic the proof for \eqref{eq:m8} in one dimensional case of $T_1(b,n)$ to bound the innermost term as 
	\begin{align*}
		&~\EE\left[\sup_v\sup_{\vf\in\calvF_{*}\cup \calvF_{*}'}
		\left\{\sum_{\vx\in C_j(\vx'),\atop x_{j}> v} \max\{0, (\epsilon(\vx) - \frac 1bN(\vx))(f_j(\vx) - f^*_j(\vx))^2\}\right\}\Bigg|\vX_1^n\right]
		\nonumber\\
		\le & r_2(b)\pth{h^2(1+X_{j, \max}) + (1+X_{j, \max})^2}
	\end{align*}
	for a constant $c(b)$ depending on $b$. Finally, the number of such classes with $N_j(\vx') > 0$ is bounded above by $\prod_{k=1\atop k\neq j}^d (1  + X_{k, \max})$. 
	Therefore, summing over all classes and taking the expectation, and including \prettyref{eq:m9_multidim}, we get the bound 
	\begin{align}\label{eq:multidim_u1bound}
		&U_1(b, j, n)
		= \EE\qth{\sup_{\vf\in\calvF_*\cup \calvF_*'} 
		\sum_{i=1}^n \left(\epsilon_i - \frac 1b\right)(f^*_j(\vX_i) - f_j(\vX_i))^2}
        \nonumber\\
        &\leq r_1(b)m_b^2h^2\EE\qth{\prod_{j=1}^d (1 + X_{j, \max})}
		+r_2(b)\bbE\qth{\prod_{k=1\atop k\neq j}^d (1  + X_{k, \max})
		\cdot (h^2 X_{j, \max}
		+X_{j, \max}^2 )}
		\nonumber\\
        &\leq (r_1(b)+r_2(b))(c_1(1)h^2 + c_1(2)M)M^d.
	\end{align}


	
		
		
		
		

\paragraph{Bounding $U_2(b, j, n)$.}
As per the one dimensional case, we bound the Rademacher complexity term $U_2(b, j, n)$ with $t_0(n) + t_1(n)$, where 
\begin{align}
	t_1(n) \triangleq \;&
	\bbE\left[\sup_{\vf\in\calvF_{*}\cup\calvF_{*}'}
	\sum_{\vx} (2(\epsilon(\vx) f^*_j(\vx) - (x_{j} + 1) \epsilon(\vx + \ve_j))(f^*_j(\vx) - f_j(\vx))\right.
	\nonumber\\
	& \left.  \quad - \frac {N(\vx)}{b} (f^*_j(\vx) - f_j(\vx))^2 \indc{N(\vx) > 0}\right]\label{eq:t1_multidim}
	\\
	t_0(n) \triangleq \;&
	\bbE\left[\sup_{\vf\in\calvF_{*}\cup\calvF_{*}'}
	\sum_{\vx} -2(x_{j}+1) \epsilon(\vx + \ve_j)(f^*_j(\vx) - f_j(\vx)) \indc{N(\vx) = 0}\right]
	\label{eq:t0_multidim}
\end{align}
We first analyze $t_1(n)$. Using the inequality $2ax-bx^2\le \frac{a^2}{b}$  for any $b>0$ we have
\begin{align}
	\frac 1b t_1(n)
	&\le \bbE\qth{\sum_{\vx}
	\frac{(\epsilon(\vx) f^*_j(\vx) - (x_{j} + 1) \epsilon(\vx + \ve_j))^2}{N(\vx)}\indc{N(\vx) > 0}
}
\end{align}
Using the facts
\begin{itemize}
	\item $\bbE[\epsilon(\vx)| \vX_1, \dots, \vX_n] = 0,\ \bbE[\epsilon(\vx)\epsilon(\vx+\ve_j)| \vX_1, \dots, \vX_n] = 0 $
	\item $\bbE[\epsilon(\vx)^2| \vX_1, \dots, \vX_n] = N(\vx)$, and,
	\item $\bbE[N(\vx + \ve_j) \mid N(\vx)] = \frac{(n - N(\vx))p_{\pi}(\vx + \ve_j)}{1 - p_{\pi}(\vx)}\le \frac{np_{\pi}(\vx + \ve_j)}{1 - p_{\pi}(\vx)}$
\end{itemize} 
we  continue the last display to get
\begin{align}\label{eq:m2_multidim}
	\frac 1b t_1(n)
	& \leq \bbE[\sum_{\vx}
	\left(f^*_j(\vx)^2 +\frac{ (x_{j} + 1)^2 N(\vx + \ve_j)}{N(\vx)}\right)\indc{N(\vx) > 0}]
	\nonumber\\ 
	& \leq \bbE[\sum_{\vx} h^2\indc{N(\vx) > 0}]
	+\bbE[\sum_{\vx}\frac{ (x_{j} + 1)^2 np_{\pi}(\vx + \ve_j)}{1 - p_{\pi}(\vx)}\cdot \frac{\indc{N(\vx) > 0}}{N(\vx)}]
	\nonumber\\ 
	& \stepa{\leq} \bbE[\sum_{\vx} h^2\indc{N(\vx) > 0}]
	+c'\cdot 
	\sum_{\vx}\frac{ (x_{j} + 1)^2 np_{\pi}(\vx + \ve_j)}{1 - p_{\pi}(\vx)}\cdot \min\{np_{\pi}(\vx), \frac{1}{np_{\pi}(\vx)}\}
	\nonumber\\ 
	&\stepb{=} \bbE[\sum_{\vx} h^2\indc{N(\vx) > 0}]
	+c'\cdot\sum_{\vx}\frac{ (x_{j} + 1)f_j^*(\vx)}{1 - p_{\pi}(\vx)}\cdot \min\{1, (np_{\pi}(\vx))^2\}
	\nonumber\\ 
	&\stepc{\leq} h^2\bbE[\prod_{j=1}^d(1 + X_{j, \max})]
	+\frac{c'f_j^*(\bm{0})}{1 - p_{\pi}(\bm{0})}
	+c'\sum_{\vx\neq \bm{0}}(x_{j} + 1)f_j^*(\vx)\cdot \min\{1, (np_{\pi}(\vx))^2\}
\end{align}
(here $c'$ is an absolute constant), 
where:
\begin{itemize}
	\item (a) is due to Property \ref{prop:P1} in the analysis of $T_2(b,n)$; 
	
	\item (b) is using 
	$f^*_j(\vx) = (x_{j} + 1)\frac{p_{\pi}(\vx + \ve_j)}{p_{\pi}(\vx)}=\EE\qth{\theta_j|X=\vx}\leq h$; 
	
	\item (c): for the first term, we use the fact that the number of vectors $\vx$ with $N(\vx) > 0$ is bounded by 
	$\prod_{j=1}^d (1 + X_{j, \max})$; for the third term, for each $\vx\neq\bm{0}$ we may choose a coordinate $k$ with $x_k > 0$. 
	Thus setting $p_{\pi_k}$ as the marginal distribution of $x_k$ we have by Stirling's inequality, again, 
	\[
	p_{\pi}(\vx)\le p_{\pi_k}(x_k) \le \sup_{\theta\ge 0} \bbP_{X\sim \Poi(\theta)}[X=x_k] 
	= \sup_{\theta\ge 0}\frac{\theta^{x_k}e^{-\theta}}{x_k!} = \frac{x_k^{x_k}e^{-x_k}}{x_k!}
	\le \frac{1}{\sqrt{2\pi x_k}}\le \frac{1}{\sqrt{2\pi }}
	\]
	and therefore $\frac{1}{1 - p_{\pi}(\vx)}\le \frac{1}{1 - \frac{1}{\sqrt{2\pi }}} \le O(1)$. 
\end{itemize}
Now, the first term in \eqref{eq:m2_multidim} is bounded by
$h^2c(1)M^d$. 
For the second term, using $p_{\pi}(\ve_j)\le 1 - p_{\pi}(\bm{0})$ we have 
$\frac{f^*_j(\bm{0})}{1 - p_{\pi}(\bm{0})}\le \frac{f^*_j(\bm{0})}{p_{\pi}(\ve_j)} = \frac{1}{p_{\pi}(\bm{0})}$, so 
\begin{align}\label{eq:erm1}
	\frac{f^*_j(\bm{0})}{1 - p_{\pi}(\bm{0})}
	\leq \min\sth{\frac{f^*_j(\bm{0})}{1 - p_{\pi}(\bm{0})},\frac{1}{p_{\pi}(\bm{0})}}
	\le 2\max\{f^*_j(\bm{0}), 1\}
	\le 2\max\{h, 1\}
\end{align}
given that $\vf^*$ is bounded by $h$ in each coordinates. 
Finally, the third term in \eqref{eq:m2_multidim} has the following bound:

\begin{align}\label{eq:m1_multidim_2}
	&\sum_{\vx\neq \bm{0}}(x_{j} + 1) f^*_j(\vx)\cdot \min\{1, (np_{\pi}(\vx))^2\}
	\nonumber\\
	& {\le} ~h\sum_{\vx\in [0, M]^d}(x_{j} + 1) + n^2h\sum_{\vx\not\in [0, M]^d}(x_{j} + 1) \cdot p_{\pi}(\vx)^2
	\nonumber\\
	& \stepa{\le} h(1 + M)^{d + 1} + n^2h\PP_{\vX\sim p_{\pi}}\qth{\vX\notin [0,M]^{d}}\bbE_{\vX\sim p_{\pi}}[X_j + 1]
	\stepb{\leq} h(1 + M)^{d + 1} + hdn^{-4}(1+c_1(4)^{1/4}M)
\end{align}
where (a) followed as there are $(1 + M)^{d}$ elements in $[0, M]^d$, 
and (b) is due to the assumptions in \prettyref{lmm:U1U2-bounds} and $\EE[X_{j,\max}+1]\leq \sth{\EE[(X_{j,\max}+1)^4]}^{1/4}$. 
Thus, summarizing \eqref{eq:m2_multidim},\eqref{eq:erm1},\eqref{eq:m1_multidim_2}, we have
\begin{align*}
t_1(n)&\leq c''\cdot b\pth{h^2c_1(1)M^d + \max\{h, 1\} + h(1+M)^{d+1}+hdn^{-4}M}
\nonumber\\
&\leq 2 c'' b\pth{\max\{1, h\}(1 + M)^{d + 1} + \max\{1, h^2\}c_1(1)(1 + M)^{d} + hdn^{-4}M}
\end{align*}
for an absolute constant $c''$, as desired. 
Since $d\le n$, $hdn^{-4}M\le hn^{-3}M < h(1+M)^d$, and can therefore be negleected. 

Next we analyze $t_0(n)$. Since we have $|\epsilon(\vx + \ve_j)|\le N(\vx + \ve_j)$ and $N(\vx + \ve_j) = 0$ for all $\vx $ with 
$\vx + \ve_j \not\in \prod_{k=1}^d [0, X_{k, \max}]$, 
we get
\begin{align}\label{eq:m4_multidim}
	t_0(n) &=
	\EE\qth{\sup_{\vf\in {\calvF_*\cup\calvF_*'}}\sum_{\vx} 
		[-2(x_{j}+1)\epsilon(\vx +\ve_j)(\fbest_j(\vx)-f_j(\vx))\indc{N(\vx)=0}]}
	\nonumber\\
	&\le\EE\qth{\sum_{\vx + \ve_j \in \prod_{k=1}^d [0, X_{k, \max}]}
		2(x_{j}+1)N(\vx +\ve_j)\sup_{f\in {\calF_*\cup\calF_*'}} \abs{\fbest_j(\vx)-f_j(\vx)}\indc{N(\vx)=0}}
	\nonumber\\
	&\le\EE\qth{\sum_{\vx + \ve_j \in \prod_{k=1}^d [0, X_{k, \max}]}
		2(x_{j}+1)\pth{\fbest_j(\vx)+X_{j, \max}+ X_{j, \max}'}N(\vx +\ve_j)\indc{N(\vx)=0}}
\end{align}
where $X_{j, \max}'$ is the maximum of $j$-th coordinate on $n$ samples independent of $\vX_1, \cdots, \vX_n$. 

Define
$A=\sth{\vX_{i}, \vX_{i'}\in [0, M]^d, \forall i=1, \cdots, n}$. 
We have $\PP\qth{A^c}\leq \frac {2d}{n^6}$ via union bound.
Then we have for an absolute constant $c'_1>0$
\begin{align}\label{eq:m3_multidim}
	&~\EE\qth{\sum_{\vx + \ve_j \in \prod_{k=1}^d [0, X_{k, \max}]} 
		2(x_{j}+1)\pth{\fbest_j(\vx)+X_{j, \max}+ X_{j, \max}'}N(\vx +\ve_j)\indc{N(\vx)=0}\cdot \indc{A^c}}
	\nonumber\\
	\leq &~\EE\qth{2(X_{j, \max}+1)\pth{h+X_{j, \max}+ X_{j, \max}'}\sum_{\vx + \ve_j \in \prod_{k=1}^d [0, X_{k, \max}]} 
		N(\vx +\ve_j)\indc{N(\vx)=0}\cdot \indc{A^c}}
	\nonumber\\
	\stepa{\leq}  &~ n\EE\qth{(X_{j, \max} + 1) \pth{h+X_{j, \max}+ X_{j, \max}'}\indc{A^c}}
	\nonumber\\
	\stepb{\leq} &~n\sqrt{\EE\qth{\pth{h+X_{j, \max}+X_{j, \max}'}^2 (X_{j, \max} + 1)^2 }}\sqrt{\PP\qth{A^c}}
	\le ~ c'_1 {hd^{1/2}M^{2}\over n^2} \stepc{\le} \frac{c_1'hM^2}{n},
\end{align}
where (a) is using $\sum_{\vx + \ve_j \in \prod_{k=1}^d [0, X_{k, \max}]} 
N(\vx +\ve_j)\indc{N(\vx)=0}\le \sum_{\vx }N(\vx) = n$, 
(b) is via Cauchy-Schwarz inequality and $\bbE[(X_{j, \max})^4],\bbE[(X'_{j, \max})^4]\leq c M^4$, 
and (c) is because $d\le n$ by our assumption. 

Next, we condition on the event $A$.
Similar to the proof of bound on $T_2(b,n)$ in the one-dimensional setup, we define
$q_{\pi, M}(\vx) \triangleq \frac{p_{\pi}(\vx)}{\bbP_{\vX\sim p_{\pi}}[\vX\in [0, M]^d]}$. 
We have $\PP\qth{N(\vx)=0|A}=(1-q_{\pi, M}(\vx))^n$, and conditioned on the set $A$ and $\{N(\vx)=0\}$,  $N(\vx+\ve_j) \sim \Binom\pth{n,{q_{\pi, M}(\vx+\ve_j)\over 1-q_{\pi, M}(\vx)}}$. Therefore: 
\begin{align*}
	&~\EE\qth{\left.\sum_{\vx + \ve_j \in \prod_{k=1}^d [0, X_{k, \max}]} 
		2(x_{j}+1)\pth{\fbest_j(\vx)+X_{j, \max}+X_{j, \max}'}N(\vx +\ve_j)\indc{N(\vx)=0} \right|A}
	\nonumber\\
	&\leq \sum_{\vx  \in \prod_{k=1}^d [0, M]^d} 2(x_{j}+1)(h+2M)\EE\qth{N(\vx+\ve_j)|\{N(\vx)=0\},A}\PP\qth{N(\vx)=0|A}
	\nonumber\\
	&\leq \sum_{\vx \in \prod_{k=1}^d [0, M]^d} 2(x_{j}+1)(h+2M){nq_{\pi, M}(\vx+\ve_j)\over 1-q_{\pi,M}(\vx)}\pth{1-q_{\pi, M}(\vx)}^n
	\nonumber\\
	&\stepa{=} \sum_{\vx  \in \prod_{k=1}^d [0, M]^d} 2(h+2M)\fbest_j(\vx)nq_{\pi, M}(\vx)\pth{1-q_{\pi, M}(\vx)}^{n-1}
	\leq 2(M + 1)^dh(h+2M).
\end{align*}
where (a) followed using $\fbest_j(\vx)=(x_j+1){p_\pi(\vx+\ve_j)\over p_\pi(\vx)}$ and the definition of $q_{\pi,M}(x+\ve_j)$, and for the last inequality, we used the fact that 
$nx(1-x)^{n-1}\le (1 - \frac{1}{n})^{n-1} < 1$ for all $x$ with $0 < x < 1$ and $\fbest_j(\vx)\leq h$. 
Collecting terms and using $M > h$, we therefore have 
\begin{equation}
	t_0(n)\leq c'_1 {hd^{1/2}M^{2}\over n^2} + h(M + 1)^{d + 1} \leq  c'_2 h(M + 1)^{d + 1} 
\end{equation}
for absolute constants $c'_1,c'_2$ as required.
\end{proof}
\subsection{Proof of Regret bound in the multidimensional setup (\prettyref{thm:main_multidim})}
\label{app:sub-expo-multidim} 


We start by describing the bounds on $\bbE[\prod_{j=1}^d(1 + X_{j, \max})^{k_j}]$ in this multidimensional setting, 
which we claim the following. 
\begin{lemma}\label{lmm:xmax-multidim}
	Given any $s, h > 0$ and integer $\beta\ge 0$ there exist constants $c(\beta), c_1, c_2, c_3, c_4>0$ such that
	\begin{enumerate}
		\item For all $\pi\in\mathcal{P}([0, h]^d)$, 
		$\EE\qth{(1+X_{j,\max})^\beta\prod_{k=1\atop k\neq j}^d(1 + X_{k, \max})}\leq c(\beta)\pth{\max\{c_1, c_2h\}{\log(n)\over \log\log (n)}}^{d-1+\beta}$; 
		
		\item For all $\pi\in\mathcal{P}([0, s\log n]^d)$, 
		$\EE\qth{(1+X_{j,\max})^\beta\prod_{k=1\atop k\neq j}^d(1 + X_{k, \max})}\le c(\beta)(\max\{c_3, c_4s\}\log (n))^{d-1+\beta}$. 
	\end{enumerate}
\end{lemma}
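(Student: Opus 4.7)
The plan is to combine the conditional independence of the coordinate-wise maxima $X_{1,\max},\ldots,X_{d,\max}$ given the latent parameters $\vtheta_1,\ldots,\vtheta_n$ with the one-dimensional moment bounds already developed for the coordinate-wise maximum. Since $X_{ij}\mid\vtheta_i\sim\Poi(\theta_{ij})$ are mutually independent across all $(i,j)$, the maxima $X_{k,\max}=\max_i X_{ik}$ are conditionally independent given $\vtheta$, and $X_{k,\max}$ depends only on $\vtheta_{\cdot k}:=(\theta_{1k},\ldots,\theta_{nk})$. Writing $g_{k,p}(\vtheta_{\cdot k}):=\EE[(1+X_{k,\max})^p\mid\vtheta_{\cdot k}]$, I obtain by tower and conditional independence
\[
\EE\qth{(1+X_{j,\max})^\beta\prod_{k\ne j}(1+X_{k,\max})}=\EE\qth{g_{j,\beta}(\vtheta_{\cdot j})\prod_{k\ne j}g_{k,1}(\vtheta_{\cdot k})},
\]
reducing the task to bounding each factor $g_{k,p}$ in a way that does not rely on joint independence across $k$, and then handling the outer expectation over $\vtheta$.

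For case 1 (bounded prior), since $\theta_{ik}\le h$ almost surely, stochastic dominance gives $X_{k,\max}\mid\vtheta_{\cdot k}\preceq_{\mathrm{st}}Z_h:=\max_{i\le n}Z_i$ with $Z_i\iiddistr\Poi(h)$. Monotonicity of $(1+\cdot)^p$ therefore yields the \emph{deterministic} bound $g_{k,p}(\vtheta_{\cdot k})\le\EE[(1+Z_h)^p]$. A one-dimensional moment calculation in the spirit of \prettyref{lmm:xmax_bound}, based on the Poisson tail $\Prob[\Poi(h)\ge t]\le(eh/t)^t e^{-h}$ integrated against $p\, t^{p-1}$, then gives $\EE[(1+Z_h)^p]\le c(p)M^p$ with $M=\max\{c_1,c_2h\}\cdot\log(n)/\log\log(n)$. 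Multiplying the resulting $d$ deterministic bounds (one with exponent $\beta$, the rest with exponent $1$) produces the claimed bound $c(\beta)M^{d-1+\beta}$.

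Case 2 (subexponential marginals) is reduced to case 1 by truncation. Let $E=\{\theta_{ik}\le 11s\log n\ \forall i,k\}$; the subexponential tail $\Prob[\theta_{ik}>t]\le 2e^{-t/s}$ together with a union bound gives $\Prob[E^c]\le 2dn\cdot n^{-11}\le 2d/n^{10}$. On $E$, applying the case 1 argument with effective bound $h=11s\log n$ yields $\EE\qth{(\cdot)\indc{E}}\le c(\beta)(c'(s)\log n)^{d-1+\beta}$. For the off-event term I would apply Cauchy--Schwarz and bound the second moment by the same conditional-independence decomposition combined with the crude inequality $X_{k,\max}\le\sum_i X_{ik}$, whose $p$-th moment under an $s$-subexponential marginal prior is at most $c(p)(ns)^p$ by standard compound-Poisson plus subexponential moment bounds; the tiny factor $\sqrt{\Prob[E^c]}\le\sqrt{d}/n^5$ then absorbs these polynomial-in-$n$ factors (using $n\ge d$), so the off-event contribution is negligible compared to the main term.

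The main obstacle is precisely the off-event term in case 2: because the joint prior $\pi$ may induce arbitrary correlation among the coordinates of $\vtheta$, the $X_{k,\max}$'s are \emph{not} independent unconditionally, so one cannot simply factor the expectation into a product of one-dimensional moments. The conditional-independence identity given $\vtheta$ sidesteps this in case 1 (where the per-coordinate bounds become deterministic), but in case 2 it still leaves a correlated outer expectation over $\vtheta$. The truncation at scale $s\log n$ combined with the very small off-event probability $d/n^{10}$ is what allows the crude moment bounds to close the argument cleanly without incurring any extra factor of $d$ in the final constant.
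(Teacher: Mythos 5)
Your case 1 is sound, and your stochastic-dominance route ($X_{k,\max}\mid\vtheta_{\cdot k}\preceq_{\mathrm{st}} Z_h$ with $Z_h$ the max of $n$ iid $\Poi(h)$, then multiply the deterministic conditional bounds) is arguably cleaner than the paper's version, which simply conditions on $\vtheta_1,\ldots,\vtheta_n$, invokes conditional independence across coordinates, and bounds each factor by ``mimicking'' the one-dimensional \prettyref{lmm:bounded_prior} and \prettyref{lmm:exp_decay_lemma}. Both give the same bound for case 1.

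Case 2 is where the gaps are. First, you misread the statement: item 2 concerns $\pi\in\mathcal{P}([0,s\log n]^d)$, a \emph{bounded} prior whose support grows with $n$, not subexponential marginals; the truncation from $\mathsf{SubE}(s)$ down to $\mathcal{P}([0,c(s)\log n]^d)$ is done elsewhere in the paper (at the regret level, \prettyref{eq:subexp_truncate}), precisely so that the off-event is controlled by a single 4th moment rather than by a $d$-fold product. So for the lemma as stated there is no event $E$ and no off-event term; one simply applies the same conditioning argument as case 1.

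Second, and more substantively, your computation on the good event $E$ does not follow from what you wrote. You derive in case 1 the explicit formula $M = \max\{c_1,c_2h\}\cdot\frac{\log n}{\log\log n}$ and then claim that ``applying the case 1 argument with effective bound $h=11s\log n$'' yields $M=c'(s)\log n$. Substituting $h=11s\log n$ into the formula actually gives $M\asymp s\frac{(\log n)^2}{\log\log n}$, which is off by a factor of $\frac{\log n}{\log\log n}$. The reason the paper gets $O(s\log n)$ in this regime is that the Chernoff analysis behaves differently when $h=\Theta(\log n)$: solving $n(eh/t)^te^{-h}\le n^{-k}$ at $t=c\,s\log n$ gives $s\log n[c\log c - c + 1]\ge (k+1)\log n$, so $t=O(s\log n)$ suffices, whereas for fixed $h$ one needs $t=\Theta(\frac{\log n}{\log\log n})$. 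This is the content of \prettyref{lmm:subexp_prior} versus \prettyref{lmm:bounded_prior}; you need both, not one formula specialized.

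Finally, even if one did want to prove the stronger subexponential version by truncation inside this lemma, your off-event Cauchy--Schwarz step does not close: the second moment of the $d$-fold product under the crude bound $X_{k,\max}\le\sum_i X_{ik}$ is of order $(ns)^{d-1+\beta}$, so after multiplying by $\sqrt{\Prob[E^c]}\lesssim\sqrt{d}\,n^{-5}$ you are left with $\sqrt{d}\,s^{d-1+\beta}\,n^{d-6+\beta}$, which is \emph{not} negligible once $d+\beta\ge 6$. The paper's strategy of truncating at the regret level (where the off-event term is $\bbE[\|\vferm-\vtheta\|^2\indc{E^c}]$, bounded via fourth moments coordinate-by-coordinate) sidesteps exactly this exponential-in-$d$ blowup.
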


We will defer the proof to \prettyref{app:poimix-props}. 

For $\pi\in\calP([0, h])^d$, by \prettyref{lmm:xmax-multidim}, there exist constants $c_1, c_2$ such that 
we may take $M = \max\{c_1, c_2h\}\frac{\log (n)}{\log \log (n)}$ into \prettyref{lmm:U1U2-bounds}. 
Note that 
This gives the overall regret bound as \\$\frac dn\max\{c_1, c_2h\}^{d+2}(\frac{\log (n)}{\log \log (n)})^{d+1}$.

Now assume that each marginals of $\pi_j$ are of $\mathsf{SubE}(s)$ for some $s > 0$. 
We now show that the multidimensional version of \prettyref{lmm:prior-truncate} applies here. 

Here, we choose $c=c(s)\triangleq 11s$ such that for each $j=1, \cdots, d$, 
we have $\bbP[X_{j} > c(s)\log(n)] \le \frac{1}{n^{10}}$. 
This means that we now have 
\begin{equation}
	\varepsilon = \bbP[\vX\not\in [0, c(s)\log (n)]^{d}] \le
	\sum_{j=1}^d \bbP[X_j > c(s)\log n]
	 \le \frac{d}{n^{10}}
\end{equation}
the middle inequality via union bound on each coordinate. 

Define the event $E = \{\vX_i\in [0, c(s)\log (n)]^d, \forall i=1, \cdots, n\}$, 
and we have $\bbP[E^c]\le dn^{-9}$. 
Again we define the truncated prior 
$\pi_{c,n}[\vX\in \cdot] = \pi[\vX\in\cdot \mid \vX\in [0, c(s)\log (n)]^{d}]$. 
Then, similar to \eqref{eq:1-dim-reduction} in the one-dimensional case, the following equation applies: 
\begin{equation}\label{eq:subexp_truncate}
	\mathsf{Regret}_{\pi}(\vferm)
	\le \mathsf{Regret}_{\pi_{c, n}}(\vferm)
	+\text{mmse}(\pi_{c, n}) - \text{mmse}(\pi) + \bbE_{\pi, c} [\|\vferm(\vX) - \vtheta\|^2\indc{E^c}]
\end{equation}
Given that $\hat{f}_j(\cdot) \le X_{j, \max}$,  we have $\bbE[(\hat{f}_j)^4]\leq \bbE[X_{j, \max}^4]\le  O(s^4(\log n)^4)$ 
by \prettyref{lmm:subexp_prior},
and $\bbE_{\pi}[\theta_{j}^4]\leq O(s^4\log^4 n)$ from the properties of subexponential priors. 
The logic $\bbE_{\pi}[(\fbest_j - \theta_{j})^4]\leq O((s\log n)^4)$ and
\[\bbE_{\pi} [(f_{{\sf erm},j}(\vX) - \theta_{j})^2\indc{E^c}]
\le \sqrt{\bbP[E^c]\bbE_{\pi} [(f_{{\sf erm},j}(\vX) - \theta_{j})^4]}
\lesssim \frac{s^2d^{1/2}}{n^{2}}, \qquad \forall j=1, 2, \cdots, d\]
then follows from there. 
This gives 
$\bbE_{\pi, c} [\|\vferm(\vX) - \vtheta\|^2\indc{E^c}]\le \frac{d^{3/2}}{n^{4}}$ by considering all the $d$ coordinates. 

The identity $\text{mmse}(\pi_c) - \text{mmse}(\pi)\le \frac{\varepsilon}{1-\varepsilon}\text{mmse}(\pi)
\le 2d\varepsilon\le \frac{2d^2}{n^2}$ still applies here in the following sense. 
Let $\vfbest$ be the Bayes estimator corresponding to $\pi$. 
Then denoting $M \triangleq c(s)\log(n)$ here we have 
\begin{align}
	\text{mmse}(\pi)
	&=\bbE[\|\vfbest(\vX) - \vtheta\|^2]
	\nonumber\\
	&=\bbE_{\vtheta\sim\pi}[\bbE_{\vX\sim\Poi(\vtheta)}[\|\vfbest(\vX) - \vtheta\|^2] | \vtheta]
	\nonumber\\
	&\ge \bbE_{\vtheta\sim\pi}[\bbE_{\vX\sim\Poi(\vtheta)}[\|\vfbest(\vX) - \vtheta\|^2] \indc{\vtheta \in [0, M]^d} | \vtheta]
	\nonumber\\
	&=\bbP[\vtheta \in [0, M]^d] \bbE_{\vtheta\sim\pi}[\bbE_{\vX\sim\Poi(\vtheta)}[\|\vfbest(\vX) - \vtheta\|^2] \indc{\vtheta \in [0, M]^d} | \vtheta]
	\nonumber\\
	&\ge (1 - \epsilon)\text{mmse}(\pi_{c, n})
\end{align}
and that $\text{mmse}(\pi)\le d$ given that the naive estimation of $\vf_{\mathsf{id}}(\vx)=\vx$ achieves an expected loss of $d$ (i.e. 1 for each coordinate). 
This shows that we also have $\mathsf{Regret}_{\pi}(\vfbest)\le \mathsf{Regret}_{\pi_{c, n}}(\vfbest) + O(\frac {d^2s^2}{n^2})\le \mathsf{Regret}_{\pi_{c, n}}(\vfbest) + O(\frac {ds^2}{n})$ in this multidimensional case (given that $d\le n$). 
Thus, it suffices to work on prior $\pi_{c, n}$ supported on $[0, c\log (n)]^d$ for some $c\triangleq c(s)$. 

Now under this truncated prior, by \prettyref{lmm:xmax-multidim} there exist absolute constants $c_3, c_4$ such that we may take $M = \max\{c_3, c_4s\}\log n$ and substitute into \prettyref{lmm:U1U2-bounds}. 
This gives an overall regret bound of $\frac dn{(\max\{c_3, c_4s\}\log (n))^{d+2}}$.

\bibliographystyle{alpha}
\bibliography{references}

\appendix

\section{Properties of Poisson mixtures}\label{app:poimix-props}

\begin{lemma}
	\label{lmm:bounded_prior}
	There exist constants $c_1, c_2$ such that for all $h > 0, k\ge 1$ and $\pi\in\mathcal{P}([0, h])$, 
	$X_{\max}$ on $n\ge 3$ samples have the following bound: 
	\begin{equation*}
		\bbP[1 + \max X_i\ge \max\{c_2, c_1h\}\cdot k\frac{\log n}{\log\log n}]\le n^{-k}
	\end{equation*}
	
\end{lemma}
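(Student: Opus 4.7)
The plan is the standard union bound combined with a Chernoff tail estimate for a Poisson mixture. First I would observe that by the \iid assumption,
\[\bbP[1 + X_{\max} \ge M] \le n\,\bbP[X_1 \ge M-1].\]
Since $\pi$ is supported on $[0, h]$, the moment generating function satisfies $\EE[e^{\lambda X_1}] = \EE_{\theta\sim\pi}[\exp(\theta(e^\lambda - 1))] \le \exp(h(e^\lambda - 1))$, so optimizing in $\lambda > 0$ yields the classical Poisson Chernoff bound
\[\bbP[X_1 \ge s] \le e^{-h}\pth{\frac{eh}{s}}^s \quad \text{for all } s \ge h.\]
Writing $h' := \max\{1, h\}$ and using the uniform bound $\bbP[X_1 \ge s] \le (eh'/s)^s$ for $s \ge h'$ handles the small-$h$ regime together with the large-$h$ regime (for $h < 1$ we simply compare $p_\pi$ with $\Poi(1)$ by stochastic dominance).

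Next I would set $M := Ch' k\log n/\log\log n$ (so that taking $c_1 = c_2 = C$ in the statement gives exactly this, since $\max\{c_2, c_1 h\} \ge Ch'$) and verify that $n\,(eh'/(M-1))^{M-1} \le n^{-k}$. Taking logarithms, this reduces to
\[(M-1)\log\frac{M-1}{eh'} \ge (k+1)\log n.\]
For $n$ larger than some absolute $n_0$, the argument of the logarithm is of order $k\log n/\log\log n$, giving $\log((M-1)/(eh')) \ge \tfrac{1}{2}\log\log n$, so the product on the left is at least $\tfrac{1}{4}\, C h' k \log n$. This exceeds $(k+1)\log n$ as soon as $C$ is chosen sufficiently large, using $h' \ge 1$ and $k \ge 1$.

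The one remaining subtlety is the small-$n$ regime $3 \le n \le n_0$, where the ratio $\log n/\log\log n$ misbehaves (note $\log\log 3 < 1$). I would handle these finitely many cases by further enlarging the constants $c_1, c_2$, which is permissible because for each fixed pair $(n, h)$ the tail $\bbP[X_1 \ge s]$ decays superexponentially in $s$. The main obstacle, to the extent there is one, is just bookkeeping constants across the regimes $h \le 1$ versus $h > 1$ and small versus large $n$; the substantive statistical input is merely the elementary Chernoff bound for $\Poi(h)$.
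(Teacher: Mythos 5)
Your proposal is correct and follows the same route as the paper: a Poisson Chernoff tail estimate for the mixture (using boundedness of the prior to uniformize the MGF), followed by a union bound over the $n$ samples, then a choice of constant large enough to make the log of the tail dominate $(k+1)\log n$. Two minor comments on where it diverges in presentation from the paper's write-up. First, your replacement of $h$ by $h' = \max\{1,h\}$ (with stochastic dominance for $h<1$) is not needed: in the exact Chernoff bound $e^{-h}(eh/x)^x$, the factor $\log h$ appears with a favorable sign in the exponent, so $h<1$ only helps, and the paper simply carries $h$ throughout with $c_0 \ge \max\{4, h\}$. Second, your small-$n$ worry is also spurious. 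Although $\log\log n$ is small near $n=3$, the quantity $\log n/\log\log n$ is bounded \emph{below} by $e$ for every integer $n\ge 3$ (its minimum as a function of $n>e$ is attained at $n=e^e$, where it equals $e$), so the threshold $M$ becomes \emph{larger}, not smaller, as $n\downarrow 3$, and the inequality $\log((M-1)/(eh')) \ge \tfrac12 \log\log n$ that you establish actually holds for all $n\ge 3$ once $C\ge e$, because $\tfrac12 u - \log u \ge 1-\log 2 > 0$ for all $u>0$. In short, your main calculation already covers the whole range $n\ge 3$, and the finite-case patch can be dropped. The paper handles this uniformity by defining $L = \sup_{n\ge 3}\{\log\log\log n - \tfrac12\log\log n\}$ and folding it into the constant, which is the same observation packaged differently.
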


\begin{proof}
	Consider $\lambda\in [0, h]$. Then for $x\ge h$ we have the following approximation for $X\sim \Poi(\lambda)$ via Chernoff's bound \cite[p.97-98]{mitzenmacher_upfal_2005}: 
	\begin{equation}\label{eq:bd-tail-bound}
		\bbP[X\ge x] \le \frac{(e\lambda)^x e^{-\lambda}}{x^x}\le \frac{(eh)^x e^{-h}}{x^x}
	\end{equation}
	Therefore for $X\sim p_{\pi}$ and $x\ge h$ we have $\bbP(X\ge x) \le\frac{(eh)^x e^{-h}}{x^x}$. 
	
	Now choose $c_0$ such that $c_0\ge \max\{4, h\}$, and for all $n\ge 3$, 
	\[
	\log \log n + \log c_0 - \log \log \log n - \log h - 1\ge \frac 12\log\log n
	\]
	That is, denoting $L = \sup_{n\ge 3} \sth{\log \log \log n - \frac 12 \log \log n}$, we take $\log c_0 \ge \log h + 1 + L$. 
	Notice that this mean we may take 
	$c_0 = \max\{4, \max \{1, \exp(1 + L)\}\cdot h\}$. 
	Then for all $k\ge 1$, $c_0k\frac{\log n}{\log \log n}\ge c_0\frac{\log n}{\log \log n}\ge c_0\ge h$ 
	given that $n > \log n$ for all $n > 1$, 
	so the tail bound in \prettyref{eq:bd-tail-bound} can be applied. 
	Setting $x = c_0k\frac{\log n}{\log \log n}$, we have 
	\begin{align}
		\log(\frac{(eh)^x e^{-h}}{x^x})
		&=-h + c_0k\frac{\log n}{\log \log n}(1 + \log h -\log c_0-\log k - \log \log n + \log \log \log n)
		\nonumber\\
		&\le -h + 4k\frac{\log n}{\log \log n}(-\frac 12 \log \log n)
		\nonumber\\
		& <  2k\log n\,, 
	\end{align}
	which implies that $\bbP[X\ge c_0k\frac{\log n}{\log \log n}] \le n^{-2k}$. 
	Finally, taking $c = 2c_0 = \max\{8, \max \{2, 2\exp(1 + L)\}\cdot h\}$, 
	we have 
	\[
	\bbP[1 + X_{\max}\ge ck\frac{\log n}{\log \log n}]\stepa{\le} n\bbP[1 + X\ge ck\frac{\log n}{\log \log n}]
	\stepb{\le} n\bbP[X\ge c_0k\frac{\log n}{\log \log n}]
	\stepc{\le} n^{-k}
	\]
	where (a) is union bound on $X_1, \cdots, X_n$, 
	(b) is using $\frac{\log n}{\log \log n} > 1$ for all $n\ge 3$ and 
	$\frac{\log n}{\log \log n}k(c - c_0)
	\ge c_0k\ge c_0 > 1$ for all $k\ge 1$, 
	and (c) is $2k-1\ge k$ for all $k\ge 1$. 
\end{proof}

\begin{lemma}
	\label{lmm:subexp_prior}
	There exist constants $c_1, c_2 > 0$ such that for all $s > 0, k\ge 1$ and $\pi\in\mathcal{P}([0, s\log n])$, 
	$X_{\max}$ on $n\ge 2$ samples has the following bound: 
	\begin{equation*}
		\bbP[X_{\max}\ge \max\{c_2, c_1s\}k\log n]\le n^{-k}
	\end{equation*}
\end{lemma}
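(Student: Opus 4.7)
The plan is to mimic the argument of \prettyref{lmm:bounded_prior}, but with the prior support $h$ replaced by $s\log n$. Because the support now grows with $n$, the resulting high-probability upper bound for $X_{\max}$ will scale as $\Theta(s\log n)$ rather than $\Theta(h\log n / \log\log n)$, which is exactly the form stated in the lemma.

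First I would invoke the Chernoff bound for the Poisson distribution: for $\lambda\ge 0$ and any $x\ge \lambda$,
\[
\bbP_{X\sim \Poi(\lambda)}[X\ge x]\le \Bigl(\tfrac{e\lambda}{x}\Bigr)^x e^{-\lambda}.
\]
Since $\pi$ is supported on $[0, s\log n]$, integrating against $\pi$ gives, for any $x\ge s\log n$,
\[
\bbP_{X\sim p_\pi}[X\ge x]\le \Bigl(\tfrac{e\, s\log n}{x}\Bigr)^x.
\]
Next I would choose absolute constants $c_1, c_2$ (e.g.\ $c_1=e^4$ and $c_2=2$) and set
\[
x=x(k,s,n)\eqdef \max\{c_2,\,c_1 s\}\,k\log n.
\]
For $k\ge 1$ and $c_1\ge 1$, we have $x\ge s\log n$, so the Chernoff bound applies. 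A direct check (splitting on whether $c_1s\ge c_2$ or not) shows $\frac{e\,s\log n}{x}\le \frac{e}{c_1 k}$ in both regimes, and therefore
\[
\log \bbP_{X\sim p_\pi}[X\ge x]\le x\bigl(1-\log c_1-\log k\bigr)\le -3x,
\]
once $c_1\ge e^4$. Since $x\ge c_2 k\log n\ge 2k\log n$, this yields $\bbP_{X\sim p_\pi}[X\ge x]\le n^{-6k}$.

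Finally, the union bound over the $n$ samples gives
\[
\bbP[X_{\max}\ge x]\le n\cdot n^{-6k}=n^{1-6k}\le n^{-k}
\]
for all $k\ge 1$, which is the desired bound. No real obstacle is expected; the only care point is arranging the constants $c_1,c_2$ so that both the tail term $(es\log n/x)^x$ decays fast enough and $x\ge s\log n$ holds uniformly in $s>0$, which is handled by the $\max\{c_2,c_1s\}$ form.
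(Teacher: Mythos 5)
Your proposal is correct and follows essentially the same approach as the paper: bound $\Prob[X\ge x]$ via the Poisson Chernoff bound with $\lambda$ pushed to its maximum value $s\log n$, choose $c_1,c_2$ so that $x=\max\{c_2,c_1s\}k\log n$ makes the exponent comfortably negative, then union-bound over the $n$ samples. The only cosmetic difference is that the paper keeps the $e^{-s\log n}$ factor and arranges constants to directly bound $1+X_{\max}$ (useful later in \prettyref{lmm:xmax_bound}), whereas you drop that factor and bound $X_{\max}$ as stated, with different but equally valid constant choices.
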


\begin{proof}
	Again, consider the following argument via Chernoff's bound \cite[p.97-98]{mitzenmacher_upfal_2005}: for $x\ge s\log n$ and $X\sim p_{\pi}$ we have 
	\begin{equation*}
		\bbP[X\ge x]\le \sup_{0\le\lambda\le s\log n}\frac{(e\lambda)^{x}e^{-\lambda}}{x^x}
		\le \frac{(e s\log n)^{x}e^{-s\log n}}{x^x}
		=\exp(-s\log n + x(1 + \log (s\log n) - \log x))
	\end{equation*}
	Now, choose $c_0 = \max\{2 + s, e^2s\}$. Then for $k\ge 1$ and $x = kc_0\log n$ we have 
	\begin{align}
		&~-s\log n + (kc_0\log n)(1 + \log (s\log n) - \log(kc_0\log n))
		\nonumber\\
		=&~(\log n)(-s+kc_0(1 + \log s - \log k - \log c_0))
		\nonumber\\
		=& ~(\log n)(-s + kc_0(1-\log k - 2))
		\nonumber\\
		\le&~ (\log n)(-s - k(2 + s))
		\le ~(\log n)(-2k)
		\le ~(\log n)(-(k + 1))
	\end{align}
	Therefore $\bbP[X\ge c_0k\log n]\le n^{-(k+1)}$. 
	
	Take $c_3 = c_0(1 + \frac{1}{\log 2})$, we have 
	$1 + c_0k\log n\le c_3k\log n$ for all $k\ge 1$. 
	Therefore, union bound gives 
	$\bbP[1 + X_{\max}\ge c_3k\log n]\le n\bbP[1 + X\ge c_3k\log n]
	\le n\bbP[X\ge c_0k\log n]\le n^{-k}$. 
	It then follows that we can take $c_1 = e^2(1 + \frac{1}{\log 2})$ and $c_2 = 6(1 + \frac{1}{\log 2})$. 
\end{proof}

\begin{lemma}
	\label{lmm:exp_decay_lemma}
	Consider a random variable $W$. 
	If there exists a function $p(n)$ such that for all integers $c\ge 1$, $\bbP(W\ge cp(n))\le n^{-c}$, 
	then for each integer $m\ge 1$ there exists a constant $c(m)$ such that for all $n\ge 2$, 
	\[
	\bbE [W^m \indc{W\ge p(n)}]\le \pth{2^m+{3^m m!\over (\log n)^{m+1}}}\frac{p(n)^m}{n}
	\]
\end{lemma}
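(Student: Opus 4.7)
The plan is to use the layer-cake identity
\begin{equation*}
\EE[W^m\indc{W\geq p(n)}] = p(n)^m\PP(W\geq p(n)) + \int_{p(n)}^\infty m t^{m-1}\PP(W\geq t)\,dt,
\end{equation*}
and split the integral at $t=2p(n)$. The boundary term and the piece on $[p(n),2p(n)]$ both employ the $c=1$ instance of the hypothesis, namely $\PP(W\geq t)\leq 1/n$, contributing $p(n)^m/n$ and $(2^m-1)p(n)^m/n$ respectively; together these produce the leading $2^m p(n)^m/n$ portion of the stated bound.

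For the tail $\int_{2p(n)}^\infty$, I would discretize along $[cp(n),(c+1)p(n))$ with $c\geq 2$ and apply the sharper $\PP(W\geq t)\leq n^{-c}$, reducing to the series
\begin{equation*}
p(n)^m\sum_{c\geq 2}\left[(c+1)^m - c^m\right] n^{-c}.
\end{equation*}
The real work is to match this series against $3^m m!\, p(n)^m/(n(\log n)^{m+1})$. A natural route is to relax $n^{-c}\leq n\cdot n^{-t/p(n)}$ and evaluate the resulting Laplace-type integral by the substitutions $u=t/p(n)$ and $v=u\log n$, producing the closed form $\frac{m!\, p(n)^m}{n(\log n)^m}\sum_{k=0}^{m-1}(2\log n)^k/k!$ via the identity $\Gamma(m,2\log n)=\frac{(m-1)!}{n^2}\sum_{k=0}^{m-1}(2\log n)^k/k!$.

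The hardest step will be absorbing the residual polynomial factor $\sum_{k=0}^{m-1}(2\log n)^k/k!$ into the constant $3^m$, because the continuous envelope loses an extra factor of $\log n$ in the large-$n$ regime where this sum grows polynomially in $\log n$. I would resolve this by a case split on $n$: for $n$ below a threshold $N(m)$, the continuous-envelope estimate already produces a uniform constant since $\log n$ is itself of constant order and the sum is bounded by an explicit finite quantity; for $n\geq N(m)$, one returns to the discrete series, observes that it is dominated by its first term $(3^m-2^m)/n^2$ (the remaining terms being geometrically negligible), and appeals to the elementary inequality $(\log n)^{m+1}\leq m!\, n$ valid for $n$ large enough to absorb this residue into the target rate $3^m m!/(n(\log n)^{m+1})$. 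Merging the two regimes then yields the uniform constant $3^m$ claimed by the lemma.
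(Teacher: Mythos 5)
Your layer-cake decomposition, the split at $t=2p(n)$, and the extraction of the leading $2^m p(n)^m/n$ term are all sound and essentially mirror the paper's shell decomposition $\sum_{k\ge 1}\EE[W^m\indc{kp(n)\le W< (k+1)p(n)}]$. The trouble is confined to the tail. Your plan to relax $n^{-c}\le n\cdot n^{-t/p(n)}$ and evaluate a Laplace integral is genuinely lossy: as you yourself observe, it produces $\frac{m!\,p(n)^m}{n(\log n)^m}\sum_{k=0}^{m-1}\frac{(2\log n)^k}{k!}$, which overshoots the target by a factor that grows like $(\log n)^m$. The proposed repair via a case split does not close this gap. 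First, the inequality $(\log n)^{m+1}\le m!\,n$ is simply false for moderately large $m$: the left side is maximized at $n=e^{m+1}$ with value $(m+1)^{m+1}e^{-(m+1)}$, which by Stirling exceeds $m!$ once $m\ge 6$. Second, and more structurally, the window where the Laplace bound is adequate requires $(\log n)\sum_{k=0}^{m-1}(2\log n)^k/k!\le 3^m$, which forces $\log n\lesssim m$ (indeed roughly $\log n\lesssim 0.55\,m$), while the ``first-term-dominance'' regime needs $n$ at least of order $2^m$ to even have geometric decay of the series; these two regimes do not overlap, so for $m$ large there is a band of $n$ where neither estimate applies.

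The clean fix, which is the one the paper uses, is a purely algebraic step that pulls the $3^m$ out before any integral appears: for $k\ge 2$ one has $(k+1)^m\le 3^m(k-1)^m$ (equivalently, in your notation, $(c+1)^m - c^m\le (c+1)^m\le 3^m(c-1)^m$ for $c\ge 2$), so the tail contribution reduces to $\frac{3^m p(n)^m}{n}\sum_{j\ge 1}j^m n^{-j}$. That remaining series is then compared with the Gamma integral $\int_0^\infty x^m n^{-x}\,dx = m!/(\log n)^{m+1}$ in one uniform step, with no case split on $n$. This inequality is exactly the missing idea in your argument; once you have it, the discrete series you derived gives the claimed bound directly and the Laplace relaxation, the incomplete-Gamma expansion, and the threshold $N(m)$ all become unnecessary.
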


\begin{proof}[Proof of \prettyref{lmm:exp_decay_lemma}]

    Denote the event $E_k=\{kp(n)\le W\le (k + 1)p(n)\}$, then for all $n\ge 2$, we consider the expansion of $P(m, n)$ as per the claim to get 
	\begin{align}
		\bbE [W^m \indc{W\ge p(n)}]
		&= \sum_{k=1}^{\infty} \bbE[W^m \indc{E_k}]
		\le {(p(n))^m} \sum_{k=1}^{\infty} {(k+1)^m\over n^{k}}
        \leq {(p(n))^m\over n}\pth{{2^m} + 3^m \sum_{k=2}^{\infty} {(k-1)^m\over n^{k-1}}}
        \label{eq:m01}
	\end{align}
	Using the Gamma integration we bound the last term in the above display using
    \begin{align*}
        \sum_{k=2}^{\infty} {(k-1)^m\over n^{k-1}}
        \leq \int_{0}^\infty x^m n^{-x} dx
        =\int_{0}^\infty x^m e^{-x\log n} dx
        ={m!\over (\log n)^{m+1}}.
    \end{align*}
    Plugging this bound back in \eqref{eq:m01} finishes the proof.
\end{proof}

\begin{lemma}\label{lmm:xmax_bound}
    	Given $X_1, \cdots, X_n\simiid p_{\pi}\triangleq \Poi\circ \pi$. 
    	Let $k\ge 1$ be an integer. 
    	Then there exist constant $c_0(k), c_1, c_2, c_3, c_4$ such that:
    	\begin{itemize}
    		\item $\bbE[(1 + X_{\max})^k]\le c_0(k)(\max\{c_1, c_2h\}\frac{\log n}{\log \log n})^k$ for all $\pi\in \mathcal{P}([0, h])$. 
    		
    		\item $\bbE[(1 + X_{\max})^k]\le c_0(k)(\max\{c_3, c_4s\}\log n)^k$ for all $\pi\in\mathcal{P}([0, s\log n])$. 
    	\end{itemize}
    \end{lemma}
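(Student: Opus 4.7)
The plan is to reduce both parts of the lemma to a direct application of \prettyref{lmm:exp_decay_lemma} with the uniform tail bounds for $X_{\max}$ already established in \prettyref{lmm:bounded_prior} and \prettyref{lmm:subexp_prior}. The key observation is that these two tail lemmas give exactly the exponential decay hypothesis of \prettyref{lmm:exp_decay_lemma}, with the ``threshold'' $p(n)$ equal to $\max\{c_1,c_2h\}\frac{\log n}{\log\log n}$ in the bounded case and $\max\{c_3,c_4s\}\log n$ in the truncated-subexponential case.

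For the first bullet, I would set $W = 1+X_{\max}$ and $p(n) = \max\{c_1,c_2h\}\frac{\log n}{\log\log n}$ where $c_1,c_2$ are the absolute constants from \prettyref{lmm:bounded_prior}. That lemma states $\bbP[W \ge c\, p(n)] \le n^{-c}$ for every integer $c\ge 1$, so the hypothesis of \prettyref{lmm:exp_decay_lemma} is satisfied. Split
\[
\bbE[W^k] \;=\; \bbE\!\left[W^k\indc{W < p(n)}\right] + \bbE\!\left[W^k\indc{W\ge p(n)}\right].
\]
The first term is at most $p(n)^k$ trivially; the second is at most $\bigl(2^k + \tfrac{3^k k!}{(\log n)^{k+1}}\bigr)\tfrac{p(n)^k}{n}$ by \prettyref{lmm:exp_decay_lemma}. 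For $n\ge 2$, this second contribution is already $O(p(n)^k)$ (in fact $o(p(n)^k)$), so absorbing everything into a $k$-dependent constant $c_0(k)$ yields $\bbE[(1+X_{\max})^k] \le c_0(k)\,p(n)^k$, which is precisely the first claim.

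For the second bullet, the structure is identical, now invoking \prettyref{lmm:subexp_prior} to obtain $\bbP[X_{\max}\ge c\,\max\{c_1',c_2's\}\log n] \le n^{-c}$ for integer $c\ge 1$, where $c_1',c_2'$ are the constants furnished there. To make the hypothesis of \prettyref{lmm:exp_decay_lemma} apply to $W = 1+X_{\max}$ rather than $X_{\max}$, it suffices to enlarge the constant slightly: since $1+X_{\max} \le 2X_{\max}$ on $\{X_{\max}\ge 1\}$ and $1+X_{\max}\le 2$ otherwise, one may take $p(n) = 2\max\{c_1',c_2's\}\log n$ and re-label the constants as $c_3,c_4$. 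The same two-term split then gives $\bbE[(1+X_{\max})^k]\le c_0(k)(\max\{c_3,c_4 s\}\log n)^k$.

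There is no substantial obstacle here; this is essentially a bookkeeping exercise that converts the tail bounds of \prettyref{lmm:bounded_prior} and \prettyref{lmm:subexp_prior} into moment bounds via \prettyref{lmm:exp_decay_lemma}. The only minor care needed is (i) aligning the ``$X_{\max}$ vs.\ $1+X_{\max}$'' distinction between the two tail lemmas by inflating the multiplicative constant by a harmless factor of two, and (ii) verifying that the sub-dominant $\tfrac{3^k k!}{n(\log n)^{k+1}}$ term from \prettyref{lmm:exp_decay_lemma} is bounded for $n\ge 2$ and can be swept into the overall $c_0(k)$.
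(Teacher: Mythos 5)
Your proof is essentially identical to the paper's: invoke \prettyref{lmm:bounded_prior} (resp.\ \prettyref{lmm:subexp_prior}) for the tail, split $\bbE[(1+X_{\max})^k]$ at the threshold $p(n)$, and apply \prettyref{lmm:exp_decay_lemma} to the tail piece. Your extra care in the second bullet — inflating the constant by $2$ to pass from a tail bound on $X_{\max}$ to one on $1+X_{\max}$ — is a legitimate fix for the fact that the \emph{statement} of \prettyref{lmm:subexp_prior} is phrased in terms of $X_{\max}$; in fact, the last display of the paper's proof of that lemma already establishes $\bbP[1+X_{\max}\ge c_3 k\log n]\le n^{-k}$, so the constants there are already chosen to absorb the $+1$, and the paper's proof of \prettyref{lmm:xmax_bound} applies it directly with $W=1+X_{\max}$ without needing the extra factor of $2$. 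Either way the argument is sound and the approach is the same.
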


\begin{proof}
	For $\pi\in\mathcal{P}([0, h])$, 
	choose $c_1, c_2$ according to \prettyref{lmm:bounded_prior} and 
	use \prettyref{lmm:exp_decay_lemma} to obtain the constant $c_0(k)\triangleq (2^k + 2^k k!)$ with $p(n) \triangleq \max\{c_1, c_2h\}\frac{\log n}{\log \log n}$
	and $W = 1 + X_{\max}$. 
	For $\pi\in\mathcal{P}([0, s\log n])$, 
	choose $c_3, c_4$ according to \prettyref{lmm:subexp_prior} and 
	use \prettyref{lmm:exp_decay_lemma} with $p(n) \triangleq \max\{c_3, c_4s\}\log n$ and $W = 1 + X_{\max}$. 
\end{proof}

\begin{proof}[Proof of \prettyref{lmm:xmax-multidim}]
	We note that conditioned on $\theta_1, \cdots, \theta_d$, 
	the coordinates $X_1, \cdots, X_d$ are independent (distributed as $X_i\sim\Poi(\theta_i)$). 
	It then follows that 
	\[
	\bbE\qth{(1+X_{j, \max})^{\beta}\prod_{k=1\atop k\neq j}^d (1 + X_{k, \max})\mid \vtheta_1, \cdots, \vtheta_n}
	=\prod_{i=1}^d \bbE\qth{(1 + X_{i, \max})^{\beta_i} | \theta_{1i}, \cdots,\theta_{ni}}
	\]
	where here $\beta_i$ is $\beta$ if $i=j$ and 1 otherwise. 
	
	For the bounded prior case, i.e. $\pi\in \mathcal{P}([0, h])^d$ for some $h > 0$, 
	we may mimic the proof of \prettyref{lmm:bounded_prior} to obtain, for some absolute constant $c(h)\triangleq \max\{c_1, c_2h\}$, 
	$\bbP[1 + X_{i, \max} \ge kc(h)\frac{\log n}{\log \log n}\mid \theta_{1i}, \cdots, \theta_{ni}]\le n^{-k}$ 
	(given that $\theta\le h$). 
	Thus we may then adapt \prettyref{lmm:exp_decay_lemma} to yield 
	$\bbE[(1 + X_{i, \max})^{\beta_i} \mid \theta_{1i}, \cdots, \theta_{ni}]
	\le c_0(\beta_i)(c(h)\frac{\log n}{\log \log n})^{\beta_i}$ for some absolute constant $c_0(\beta_i)$ that depends only on the exponents $\beta_i$. 
	Since this inequality holds regardless of $\theta_{1i}, \cdots, \theta_{ni}$ 
	(so long as they are in the range $[0, h]$), 
	the desired bound now becomes 
	\[
	\bbE\qth{(1+X_{j, \max})^{\beta}\prod_{k=1\atop k\neq j}^d (1 + X_{k, \max})}
	\le c_0(\beta)c_0(1)^{d-1}\left(c(h)\frac{\log n}{\log \log n}\right)^{d - 1 + \beta}
	\]\[
	\le c_0(\beta)\left(c(h)\max\{1, c_0(1)\}\frac{\log n}{\log \log n}\right)^{d - 1 + \beta}
	\]
	
	Likewise, for the case $\pi\in \mathcal([0, s\log n]^d)$, 
	we may mimic the proof of \prettyref{lmm:subexp_prior} to obtain, 
	for some absolute constant $c'(s)\triangleq \max\{c_3, c_4h\}$, 
	$\bbP[1 + X_{i, \max}\ge kc(s)\log n \mid \theta_{1i}, \cdots, \theta_{ni}]\le n^{-k}$. 
	Using \prettyref{lmm:exp_decay_lemma} again, 
	$\bbE[(1 + X_{i, \max})^{\beta_i} \mid \theta_{1i}, \cdots, \theta_{ni}]
	\le c_0(\beta_i)(c'(s)\log n)^{\beta_i}$. Considering all $\vtheta_1, \cdots , \vtheta_n$ we then get 
	\[
	\bbE\qth{(1+X_{j, \max})^{\beta}\prod_{k=1\atop k\neq j}^d (1 + X_{k, \max})}
	\le c_0(\beta)\left(c'(s)\max\{1, c_0(1)\}\log n\right)^{d - 1 + \beta}
	\]

\end{proof}


\section{Proof of technical results}\label{app:technical}

\subsection*{Proof of \prettyref{lmm:erm_construction}}
Throughout the solution, for $s\le t$ we denote $m(s,t) \triangleq \frac{\sum_{i=s}^t w_i}{\sum_{i=s}^t v_i}$, 
where $m(s, t) =\infty$ if $v_i=0$ for $s\le i\le t$. 
Denote, also, the cost function $G(f)\triangleq \sum_{i=1}^n v_if(a_i)^2 - 2w_if(a_i)$. 
We restrict our attention to establishing $\ferm(a_1)$; 
the rest follows similarly. 
Let $i_2$ be the maximum index such that $\ferm(a_1) =\cdots = \ferm(a_{i_2})$ for some $i_2\ge 1$. 

We first claim that $\ferm(a_1) = m(1, a_{i_2})$. 
Indeed, for each real $t$, and integer $j=1, \cdots, k$, 
we define the following function $f_{j, t}(a_i) \triangleq 
\begin{cases}
    \ferm(a_i) + t & 1\le i\le j\\
    \ferm(a_i) & \text{otherwise}
\end{cases}$. 
Then by the maximality of $i_2$, for some small $\epsilon > 0$, 
$f_{i_2, t}$ is still monotone for some $t\in (-\epsilon, \epsilon)$. 
In addition, 
\begin{equation}
    \frac{\partial G(f_{j, t})}{\partial t} = \sum_{i=1}^j 2(v_i(\ferm(a_i)+t) - w_i)\,.
\end{equation}
Since $\ferm=\argmin G(f)$, 
$\frac{\partial G(f_{i_2, t})}{\partial t}|_{t=0}=0$. 
Therefore, 
\begin{equation}\label{eq:partial_eq}
\ferm(a_1)\sum_{i=1}^{i_2} v_i
=\sum_{i=1}^{i_2} \ferm(a_i)v_i = \sum_{i=1}^{i_2} w_i\,.
\end{equation}
Since $\max\{v_i, w_i\} > 0$ and each $v_i, w_i$ is nonnegative, 
we cannot have $\sum_{i=1}^{i_2} v_i=\sum_{i=1}^{i_2} w_i=0$. 
It then follows that $\ferm(a_1) = \frac{\sum_{i=1}^{i_2} w_i}{\sum_{i=1}^{i_2} v_i} = m(1, i_2)$. 

It now remains to show that $m(1, i_2)\le m(1, j)$ for all $j=1, \cdots, k$, 
and the inequality is strict for $j > i_2$. 
Now for any $j$ with $1\le j\le k$, for some small $\epsilon > 0$, 
$f_{j, t}$ is still monotone for some $t\in (-\epsilon, 0]$. 
Given also $\ferm=\argmin G(f)$, 
$\frac{\partial G(f_{j, t})}{\partial t}|_{t=0}\le 0$. 
Since $\ferm(a_i)\ge \ferm(a_1)$ for all $i$, 
we have 
\begin{equation}\label{eq:partial_sum}
\ferm(a_1)
\sum_{1\le i\le j} v_i 
\le 
\sum_{1\le i\le j} \ferm(a_i)v_i  \le \sum_{1\le i\le j} w_i\,,
\end{equation}
which implies that $m(1, j)\ge \ferm(a_1) = m(1, i_2)$. 
To show that $m(1, j) > m(1, i_2)$ for all $j > i_2$, 
suppose otherwise that $m(1, j) = m(1, i_2)$ for some $j > i_2$. 
This means the inequality in \prettyref{eq:partial_sum} is an equality for this $j$. 
In particular, 
\begin{equation}\label{eq:j_ineq}
    \ferm(a_1)
\sum_{i=1}^j v_i = \sum_{i=1}^j \ferm(a_i)v_i
\end{equation}
In view of \prettyref{eq:partial_eq}, 
from $\sum_{i=1}^j \ferm(a_i)v_i  = \sum_{i=1}^j w_i$ we have 
\begin{equation}
    \sum_{i=i_2+1}^j \ferm(a_i)v_i  = \sum_{i=i_2+1}^j w_i\,.
\end{equation}
By the maximality of $i_2$, we have $\ferm(a_i) > \ferm(a_1)$ for all $i > i_2$. 
Given that $v_i\ge 0$ for all $i$, 
\prettyref{eq:j_ineq} then implies $v_i=0$  for $i=i_2+1, \cdots, j$. 
This would imply that $\sum_{i=i_2+1}^j w_i=0$, 
i.e. $w_i = 0$ for all $i=i_2+1, \cdots, j$. 
This contradicts $\max\{v_i, w_i\} > 0$ for each $i=1, \cdots, n$.

\subsection*{Proof of \prettyref{lmm:binom_tail_rad}}\label{app:binom_tail_proof}
Recall that conditioned on $X_1^n$, 
$\epsilon(x)\sim 2\cdot Binom(N(x), \frac 12) - N(x)$. 
Since $b > 1$, it then follows that 
\begin{align*}
	\bbE[\max\{\epsilon(x) - \frac 1bN(x), 0\}]
	&= \bbE[(\epsilon(x) - \frac 1bN(x)) \indc{\epsilon(x) > \frac 1bN(x)}]
	\nonumber\\
	& \le (1-\frac 1b)N(x)\bbP[\epsilon(x) > \frac 1bN(x)]
	\nonumber\\
	& \stepa{\le} (1-\frac 1b)N(x)\exp(-N(x)D(\frac{1+\frac 1b}{2} || \frac 12))
	\stepb{\le} \frac{1-\frac 1b}{e\cdot D(\frac{1+\frac 1b}{2} || \frac 12)}
\end{align*}
where (a) is from \cite[Example 15.1, p.254]{polyanskiy_information_2022} and (b) is using the fact that for all $a > 0$ and $y\ge 0$, $y\exp(-ay)\le \frac{1}{ae}$.

	\subsection*{$O(X_{\max}\log X_{\max})$ Time Complexity Optimization}\label{app:stack}
	We now describe an algorithm based on stack that reduces the computation in \prettyref{lmm:erm_construction} from $O(X_{\max}^2)$ to  $O(X_{\max}\log X_{\max})$, 
	with this log factor only used in sorting $\{(X, N(X))\}$ for $X=0, 1, \cdots, X_{\max}$. 
	
	Let $W_1 < \cdots < W_k$ be the distinct elements in $\{X_1, \cdots, X_n\}\cup \{X_1 - 1, \cdots, X_n - 1\}$. 
	We consider a stack $S$, initialized as $\emptyset$, with each element being the triple $(I, w, t)$ where $I$ denotes the interval of piecewise constancy, $w=\sum_{k\in I} N(W_k)$ and $t = \sum_{j\in I}(W_k+1)N(W_k+1)$. 
	The invariant we are maintaining here is that the ratio $\frac{t}{w}$ is nondecreasing (this ratio is considered as $+\infty$ if $w=0$). 
	
	At each step $t=1, \cdots, k$ we do the following: 
	\begin{itemize}
		\item Initialize $a\triangleq ([t, t], N(W_t), (W_t + 1)N(W_t + 1))$, the active element; 
		
		\item 
		Suppose, now, $a=(I, w, t)$. 
		While the stack is nonempty and the top (most recent) element $a'=(I, w, t)$ $w't\le wt'$ (in particular, when $w, w' > 0$ we have the ratio $\frac{t}{w}\le \frac{t'}{w'}$), 
		we pop $a'$ from the stack, 
		and set $a = (I\cup I', w + w', t + t')$. 
		
		\item Push $a$ onto the stack. 
	\end{itemize}
    Then for each element in the form $([a, b], w, t)$ we have $\ferm(x) = \frac{t}{w}$ for all $x=W_a, \cdots, W_b$. 
    Notice that the largest element, $W_k$, has $N(W_k) > 0$, so the solution will always be well-formed. 
    
    To justify the time complexity, we see that there are at most $k$ pushes into the stack. 
    Each pop decreases the stack size by 1, so that cannot appear more than $k$ times either. 
    Assuming that each elementary computation (e.g. calculating $w't$ and $wt'$) is $O(1)$, 
    this stack operation takes $O(k)$. 
    Since $k\le X_{\max}$, the claim follows. 
	
\subsection*{Proof of \prettyref{lmm:binom_penal}}
	We will bound $\bbP[L_c(\epsilon) \ge k]$ for each integer $k\in [0, n]$. 
	First, we see that $\sum_{i=1}^j \epsilon_i - cj\le (1-c)j$ (i.e. we'll only consider $j \ge k$)
	and for this sum to be positive we need $\sum_{i=1}^j \epsilon_i > cj$. 
	If $X_j\sim Binom(j, \frac 12)$ we have 
	\[
	\bbP[\sum_{i=1}^j \epsilon_i> cj]
	=\bbP[X_j > j(\frac{c+1}{2})]
	\le \exp(-j D(\frac{c+1}{2} || \frac 12))
	\]
	by (i.e. \prettyref{lmm:binom_tail_rad}). 
	Now denoting $D(\frac{c+1}{2} || \frac 12)=c_1 > 0$, we have 
	\begin{align}
		\bbP[L_c(\epsilon) \ge k]
		&=\bbP[\exists j\ge k: \sum_{i=1}^j \epsilon_i - cj\ge k]
		\nonumber\\
		&\le \sum_{j=k}^n \bbP[\sum_{i=1}^j \epsilon_i - cj\ge k]
		\le \sum_{j=k}^n\exp(-j c_1)
		\le \frac{\exp(-c_1k)}{1-\exp(-c_1)}
	\end{align}
	Therefore we have 
	\begin{equation*}
		\EE[L_c(\epsilon)]
		\le 1 + \sum_{k=0}^n\bbP[L_c(\epsilon) \ge k]
		\le 1 + \sum_{k=0}^n \frac{\exp(-c_1k)}{1-\exp(-c_1)}
		\le 1 + \frac{1}{(1-\exp(-c_1))^2}. 
	\end{equation*}
    as desired.

\end{document}